\let\pa=\partial
\let\e=\epsilon
\let\ve=\varepsilon
\let\f=\frac
\let\D=\Delta
\let\wh=\widehat
\def\PP{\Bbb{P}}
\def\na{\nabla}
\def\ga{\gamma}
\def\la{\lambda}
\def\d{\delta}
\def\h{{\rm h}}
\def\T{\frak{T}}
\newcommand{\w}[1]{\langle {#1} \rangle}
\def\dive{\mathop{\rm div}\nolimits}
\def\cB{{\mathcal B}}
\def\cF{{\mathcal F}}
\def\cS{{\mathcal S}}
\def\Z{\mathop{\mathbb Z\kern 0pt}\nolimits}
\def\N{\mathop{\mathbb N\kern 0pt}\nolimits}
\def\Q{\mathop{\mathbb Q\kern 0pt}\nolimits}
\def\R{\mathop{\mathbb R\kern 0pt}\nolimits}
\def\eqdefa{\buildrel\hbox{\footnotesize def}\over =}
\newcommand{\andf}{\quad\hbox{and}\quad}
\newcommand{\with}{\quad\hbox{with}\quad}
\newcommand{\beq}{\begin{equation}}
\newcommand{\eeq}{\end{equation}}
\newcommand{\ben}{\begin{eqnarray}}
\newcommand{\een}{\end{eqnarray}}
\newcommand{\beno}{\begin{eqnarray*}}
\newcommand{\eeno}{\end{eqnarray*}}
\newtheorem{defi}{Definition}[section]
\newtheorem{thm}{Theorem}[section]
\newtheorem{lem}{Lemma}[section]
\newtheorem{rmk}{Remark}[section]
\newtheorem{prop}{Proposition}[section]
\numberwithin{equation}{section}
\begin{document}

\title[Refined analyticity radius of NS equations ]
{On the refined  analyticity radius of 3-D generalized Navier-Stokes equations}

\author[Dong Li]{Dong Li}
\address[Dong Li]{Department of Mathematics, the  University
of Hong Kong, Hong Kong, China}%
\email{mathdl@hku.hk}

\author[Ping Zhang]{Ping Zhang}
\address[Ping Zhang]{Academy of Mathematics $\&$ Systems Science
	and Hua Loo-Keng Center for Mathematical Sciences, Chinese Academy of
	Sciences, Beijing 100190, CHINA, and School of Mathematical Sciences,
	University of Chinese Academy of Sciences, Beijing 100049, China.} \email{zp@amss.ac.cn}

\date{\today}

\begin{abstract}
{We analyze the instantaneous growth of analyticity radius for three
 dimensional  generalized Navier-Stokes equations.}
 For the subcritical $H^{\gamma}(\mathbb R^3)$
case with $\ga>\f12,$ we
 prove that there exists a positive time $t_0$ so that for any $t\in]0, t_0]$, the radius of analyticity of the solution $u$ satisfies the pointwise-in-time lower bound
 $${\rm rad}(u)(t)\geq \sqrt{(2\ga-1)t\bigl(|\ln t|+\ln|\ln t|+K_t\bigr)},$$
{ where $K_t \to \infty$ as $t\to 0^+$}.
 This in particular gives a nontrivial improvement of  the previous result  by Herbst and Skibsted in \cite{HS} for the case $\ga\in ]1/2,3/2[$ and
  also settles the  {decade-long} open question in \cite{HS}, namely, whether or not
 $\liminf_{t\to 0^+}\f{{\rm rad}(u)(t)}{\sqrt{t|\ln t|}}\geq \sqrt{2\ga-1}$ for all $\ga\ge \f32.$ In the critical case
 $H^{\frac 12}(\mathbb R^3)$ we prove that there exists $t_1>0$ so that for any $t\in ]0, t_1],$ ${\rm rad}(u)(t)\geq \la(t)\sqrt{t}$ with
 $\la(t)$ satisfying $\lim_{t\to 0^+}\la(t)=\infty.$

\end{abstract}
\maketitle

\noindent {\sl Keywords:} Generalized Navier-Stokes Equations,
Analyticity radius, Sobolev spaces.

\vskip 0.2cm
\noindent {\sl AMS Subject Classification (2000):} 35Q30, 76D03  \

\setcounter{equation}{0}
\section{Introduction}
In this paper, we consider the instantaneous growth of analyticity radius for the solutions to the following  $3$-D  generalized
 Navier-Stokes equations in $\R^+\times\R^3:$
\begin{equation*}
(GNS)\qquad \left\{\begin{array}{l}
\displaystyle \pa_t u-\D u=Q(u,u), \qquad (t,x)\in\R^+\times\R^3, \\
\displaystyle  u|_{t=0}=u_0.
\end{array}\right.
\end{equation*}
{Here $u=(u^1,u^2,u^3):\, \mathbb R^+ \times \mathbb R^3 \to \mathbb R^3$
denotes the velocity of the fluid under study. The viscosity preceding the Laplacian term is set to be one.
Throughout this paper we shall denote by $Q=(Q^1, Q^2, Q^3)$ any bilinear map of the form:
\beq\label{S1eq1}
Q^j(u,v)\eqdefa \sum_{k,\ell,m=1}^3 q^{j,m}_{k,\ell}(D)\pa_m(u^k v^\ell),
\eeq
where} $q^{j,m}_{k,\ell}(D)$ is a Fourier multiplier with symbol $q^{j,m}_{k,\ell}(\xi)\eqdefa\sum_{n,p=1}^3\alpha_{k,\ell}^{j,m,n,p}\f{\xi_n\xi_p}{|\xi|^2},$
and $\alpha_{k,\ell}^{j,m,n,p}$ are real numbers. {The precise numerical values of
$\alpha_{k, \ell}^{j,m,n,p}$ will not play any role in our analysis. Henceforth from a practical point of view it is
often useful
to regard $Q(u,v)$ as
\begin{align}
Q(u,v) =\mathcal R \partial (uv),
\end{align}
where $\mathcal R$ denotes a general Riesz transform.  Using this abstraction it is easy to deduce scaling transformations
associated with $(GNS)$.
}
{Namely if $u=u(t,x)$ is a smooth solution to $(GNS)$, then for  $\lambda>0$,
\begin{align}
u_{\lambda}(t,x) \eqdefa \lambda u(\lambda^2 t, \lambda x)
\end{align}
forms a one-parameter family of smooth solutions to $(GNS)$.  The  homogeneous
space $\dot H^{\frac 12}(\mathbb R^3)$ is critical
in the sense that $\|u_{\lambda}(t,\cdot)\|_{\dot H^{\frac 12}} = \|u(t,\cdot)\|_{\dot H^{\frac 12}}$ for any $\lambda>0$.
By a slight generalization we designate the inhomogeneous spaces $H^s(\mathbb R^3)$, $s=\frac 12$, $s>\frac 12$ as critical and subcritical
spaces respectively.
}

The motivation for us to study the system $(GNS)$ comes from the following classical 3-D incompressible Navier-Stokes equations:
\begin{equation*}
(NS)\qquad \left\{\begin{array}{l}
\displaystyle \pa_t u+u\cdot\nabla u-\D u=-\nabla P, \qquad (t,x)\in\R^+\times\R^3, \\
\displaystyle \dive u = 0, \\
\displaystyle  u|_{t=0}=u_0,
\end{array}\right.
\end{equation*}
where $u$ stands for the  fluid  velocity and
$P$ for the scalar pressure function, which guarantees the divergence free condition of the velocity field.
In fact, by applying
 Leray projection operator, $\PP={\rm I}+\na(-\D)^{-1}\dive,$ to $(NS),$ we obtain equations of the type $(GNS).$ One
 may check pages 206-207 of \cite{bcd} for a { motivating discussion of the system $(GNS)$. See
 also Chapter 5 therein for an extensive review of classical wellposedness results for $(GNS)$.}

In the { magnificent seminal} paper \cite{lerayns}, Leray proved the global existence of weak solution and local existence of strong solution to $(NS).$
It is well-known that strong solutions of $(NS)$ are in fact analytic in both space and time variables (see  \cite{lemarie3} for instance).
In fluid mechanics, the space analyticity radius of solutions to Navier-Stokes equations
yields a Kolmogrov type length scale encountered in turbulence theory, one may check \cite{BJMT, BS07,DT95, F97,HKR90,Ku99} and the references therein for more details.

 Mathematically, the study of analyticity of solutions to the Navier-Stokes equations  goes
back to Masuda in \cite{Ma67}, where the authors used complex-analytic techniques to investigate the
 analyticity in both space and time for the solutions of 2-D Navier-Stokes equations  in a
bounded domain with Dirichlet boundary {conditions}.
Foias and Temam \cite{foiastemam} introduced the notion of Gevrey norm, which
allows one to study analyticity properties of solutions via energy method. In particular, they \cite{foiastemam}
 proved the analyticity of periodic solutions of $(NS)$ in space and time with initial data $u_0\in H^1(\Bbb{T}^3)$ (see
 also \cite{FMRT}).
 Gruji\'c and Kukavica \cite{GK98} investigated the analyticity radius of the solution to $(NS)$ with initial data in $L^p$ for $p$
 greater than the space dimensions.
 The related result was later
extended by the authors in \cite{cheminleray,katomasuda,lemarie2,lemarie1} to show that: there exists a positive time $T$
so that
\beno
\int_{\R^3 }|\xi| \bigl(\sup_ {t\leq T}
e^{     \sqrt t|\xi|} |\wh u (t,\xi)| \bigr)^2\,d\xi + \int_0^T \int_{\R^3 }|\xi|^3 \bigl(
e^{     \sqrt t|\xi|} |\wh u (t,\xi)| \bigr)^2\,d\xi dt<\infty\,.
\eeno
{This
 in particular implies the Fujita-Kato solution of $(NS),$  which was constructed by Fujita and Kato in \cite{fujitakato},   with initial data $u_0\in \dot H^{\f12}(\R^3)$ is analytic for
any positive time $t.$} One may check related results in the survey book \cite{lemarie3}.

We remark that in the previous works \cite{CGZ3,foiastemam,HS}, the authors used Gevrey norm of the form
$\|e^{\sqrt{{\rm r}(t)}{|D|}}u(t)\|_X$ with a $L^2$ based Sobolev space $X.$
In \cite{lemarie2}  Lemari\'e-Rieusset  studied
Gevrey regularities of the solution $u$ to $(NS)$ in the $L^p$ framework. One may check \cite{BBT, HZ1, Zhang22}
for {more} recent development in this direction.

Before proceeding, we recall the definition of Sobolev spaces from \cite{bcd}:

\begin{defi}\label{S1def1}
{\sl \begin{enumerate}
\item[(1)] For $s\in\R,$ we define the inhomogeneous Sobolev space $H^{s}(\R^3)$ to be
the space of those tempered distributions $f$ which satisfy
\beno
\|f\|_{H^s}\eqdefa \|\w{\xi}^s\widehat{f}(\xi)\|_{L^2}<\infty,\eeno
where $\widehat{f}$  denotes the Fourier transform of $f.$ Here and  in  all that follows, we always denote  the quantity $\w{\xi} \eqdefa \bigl(1 + |\xi|^2\bigr)^{\f12}.$

\item[(2)] For $s\in\R,$ we define the homogeneous Sobolev space $\dot H^{s}(\R^3)$ to be
the space of those homogeneous distributions $f$ which satisfy
\beno
\|f\|_{\dot H^s}\eqdefa \||{\xi}|^s\widehat{f}(\xi)\|_{L^2}<\infty.\eeno

\end{enumerate}
}
\end{defi}

{From a scaling perspective, the $\mathcal O(\sqrt t)$-radius of analyticity of the solution to $(NS)$ seems to be
optimal since it almost fully utilizes the heat kernel. Thus it is somewhat surprising that
Herbst and Skibsted \cite{HS} proved the following sharpened result}:

\begin{thm}[Theorem 1.3 of \cite{HS}]\label{thHS}
{\sl Suppose $u_0\in H^\ga$ for some $\ga\in ]1/2,3/2[.$ Then the system $(NS)$ with initial data $u_0$
has a unique local solution $u$ on $[0,T].$ Let $\ve\in ]0,2\ga-1[.$ Then there exist constant $t_0=t_0(\ve,\ga,\|u_0\|_{H^\ga})\in
]0,T]$ and $C=C(\ve,\ga,\|u_0\|_{H^\ga})>0$ such that
\beq\label{S1eq12}
\bigl\|e^{\sqrt{2\ga-1-\ve}\sqrt{t|\ln t|}|D|}u(t)\bigr\|_{H^\ga}\leq Ct^{\f14+\ve-\f{\ga}2}\quad \mbox{for all} \ \ t\in ]0,t_0].
\eeq
In particular,
\beq\label{S1eq3a}
\liminf_{t\to 0^+}\f{{\rm rad}(u(t))}{\sqrt{t|\ln t|}}\geq \sqrt{2\ga-1}.
\eeq
Henceforth, we always denote ${\rm rad}(u(t))$ to be the space analyticity radius of $u(t).$
}
\end{thm}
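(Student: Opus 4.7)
The plan is to combine the Fujita-Kato local wellposedness in $H^\gamma$ with a time-dependent Gevrey weight $e^{\phi(t)|D|}$ in the spirit of Foias-Temam. Take the unique local solution $u \in C([0, T]; H^\gamma)$ from Fujita-Kato, and introduce on a small interval $(0, t_0]$ the weight $\phi(t) \eqdefa \sqrt{(2\gamma - 1 - \epsilon)\, t|\ln t|}$ together with the Gevrey variable $v(t) \eqdefa e^{\phi(t)|D|} u(t)$. Applying $e^{\phi(t)|D|}$ to the Duhamel formula for $u$ and moving to the Fourier side yields the pointwise bound
\beno
|\wh v(t, \xi)| \le e^{\phi(t)|\xi| - t|\xi|^2}|\wh u_0(\xi)| + \int_0^t e^{\phi(t)|\xi| - (t-s)|\xi|^2}|\xi|\,(|\wh u(s)|\ast|\wh u(s)|)(\xi)\, ds,
\eeno
since the symbols $q^{j,m}_{k,\ell}(\xi)$ are bounded and $Q$ contributes only one derivative.

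Next, I would decompose $e^{\phi(t)|\xi|} = e^{(\phi(t) - \phi(s))|\xi|}\, e^{\phi(s)|\xi|}$ inside the integrand and absorb the first factor into the heat kernel via completion of the square,
\beno
e^{(\phi(t) - \phi(s))|\xi| - (t-s)|\xi|^2} \le \exp\!\Bigl(\f{(\phi(t) - \phi(s))^2}{4(t-s)}\Bigr).
\eeno
The triangle inequality $e^{\phi(s)|\xi|} \le e^{\phi(s)|\xi - \eta|} e^{\phi(s)|\eta|}$ then converts the convolution of $|\wh u(s)|$ into a convolution of $|\wh v(s)|$, producing a closed pointwise-Fourier inequality for $|\wh v|$ alone, with Gaussian-type time kernels.

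With this in hand, I would take $\w{\xi}^\gamma L^2$ norms, apply the bilinear Sobolev product estimate $\|fg\|_{H^{\gamma - 1}} \lesssim \|f\|_{H^\gamma}\|g\|_{H^\gamma}$ (valid since $\gamma \in (1/2, 3/2)$), and close a fixed-point argument in the weighted norm
\beno
\|v\|_{X_{t_0}} \eqdefa \sup_{t \in (0, t_0]} t^{\f{\gamma}{2} - \f{1}{4} - \epsilon}\, \|v(t)\|_{H^\gamma}.
\eeno
The exponent $\f{\gamma}{2} - \f{1}{4} - \epsilon$ is dictated by the interplay of the heat kernel scaling with the factor $t^{-(2\gamma - 1 - \epsilon)/4}$ generated by the Gevrey weight, together with the dyadic frequency decay of $\wh u_0$ near the maximizer $|\xi| = \phi(t)/(2t)$. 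The iteration closes for $t_0$ small depending on $\epsilon,\gamma,\|u_0\|_{H^\gamma}$. The conclusion \eqref{S1eq12} is equivalent to $\|v\|_{X_{t_0}} \le C$, and \eqref{S1eq3a} then follows from the Paley-Wiener characterization of the analyticity radius together with sending $\epsilon \to 0^+$.

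The hardest step is the sharp verification of the kernel estimate
\beno
\sup_{0 \le s < t \le t_0} \f{(\phi(t) - \phi(s))^2}{4(t-s)} \le \f{2\gamma - 1 - \epsilon}{4}\,|\ln t|\,\bigl(1 + o(1)\bigr) \quad\text{as } t \to 0^+.
\eeno
Because $\phi^2(t) = (2\gamma - 1 - \epsilon)\,t|\ln t|$ is neither concave nor convex on $(0, t_0]$, one cannot simply invoke Cauchy-Schwarz; instead one must exhibit $s = 0$ as the essentially extremal configuration via direct monotonicity analysis of $s \mapsto (\phi(t) - \phi(s))^2/(t-s)$, together with a careful choice of $t_0$ small in terms of $\epsilon$ and $\gamma$. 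This sharp bound is precisely what produces the constant $\sqrt{2\gamma - 1 - \epsilon}$ in front of $\sqrt{t|\ln t|}$; any cruder version would prevent the $\epsilon \to 0^+$ limit that gives \eqref{S1eq3a}.
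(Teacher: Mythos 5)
This statement is Theorem~1.3 of Herbst--Skibsted \cite{HS}, quoted here as background; the present paper gives no proof of it, so there is no in-paper argument to compare yours against. (The paper's own contribution, Theorem \ref{thm1}, \emph{improves} this result by a different mechanism: a frequency splitting at $N_1=\la T^{-1/2}$, the norm $\|\cdot\|_{X_T}$ carrying the indicator $1_{|\xi|\ge 0.01 N_1}$, the time splitting $\int_0^t=\int_0^{\eta_0 t}+\int_{\eta_0 t}^t$, and the high-frequency smallness $\eta_J^\ga(t)\to 0$ of Lemma \ref{S2lem3}.) Your proposal is the standard Foias--Temam/Gevrey route and its architecture is sound. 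One correction to your ``hardest step'': $\phi^2(t)=(2\ga-1-\ve)\,t|\ln t|$ \emph{is} concave on $]0,1[$ (its second derivative is $-(2\ga-1-\ve)/t<0$), and this concavity is exactly what makes the kernel bound clean and exact rather than merely asymptotic: concavity with $\phi^2(0)=0$ gives $\phi(s)\ge\sqrt{s/t}\,\phi(t)$, hence $(\phi(t)-\phi(s))^2\le\phi(t)^2(1-\sqrt{s/t})^2\le\phi(t)^2(1-s/t)$, so that $\sup_{0\le s<t}\f{(\phi(t)-\phi(s))^2}{4(t-s)}=\f{\phi(t)^2}{4t}=\f{(2\ga-1-\ve)|\ln t|}{4}$ with no $(1+o(1))$ loss. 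So that step is easier than you fear.

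The genuine gaps are quantitative. First, completing the square as you write it, $e^{(\phi(t)-\phi(s))|\xi|-(t-s)|\xi|^2}\le e^{(\phi(t)-\phi(s))^2/(4(t-s))}$, consumes the \emph{entire} heat kernel, leaving no parabolic smoothing to absorb the derivative in $Q\approx\mathcal R\pa(u\otimes u)$ and the regularity deficit of the product ($u\otimes u\in H^{2\ga-3/2}$ only). You must retain a fraction $\theta$ of the Gaussian, which inflates the exponent to $\f{(\phi(t)-\phi(s))^2}{4(1-\theta)(t-s)}$; this is survivable only because you have an $\ve$ of room, and the bookkeeping between $\theta$ and the final power of $t$ has to be tracked explicitly (this is precisely why the paper's own nonlinear estimate splits the time integral and treats $s$ near $0$ and $s$ near $t$ differently). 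Second, and more seriously for the stated form \eqref{S1eq12}: even the \emph{linear} term only satisfies $\|e^{\phi(t)|D|}e^{t\D}u_0\|_{H^\ga}\lesssim e^{\phi(t)^2/(4t)}\|u_0\|_{H^\ga}=t^{\f14+\f\ve4-\f\ga2}\|u_0\|_{H^\ga}$, which is \emph{larger} than $t^{\f14+\ve-\f\ga2}$ as $t\to0^+$; so the free evolution does not obviously lie in your space $X_{t_0}$, and your iteration cannot start. Your appeal to ``frequency decay of $\wh u_0$ near the maximizer $|\xi|=\phi(t)/(2t)$'' is the right idea, but for general $u_0\in H^\ga$ that decay is only qualitative ($o(1)$, no rate), so it cannot upgrade $t^{\ve/4}$ to the quantitative $t^{\ve}$ in the stated bound. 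You either need to rebalance the $\ve$'s (run the weight with constant $2\ga-1-\ve$ but claim only $t^{\f14+\f\ve4-\f\ga2}$, which still suffices for \eqref{S1eq3a} after $\ve\to0^+$), or supply the finer linear analysis that \cite{HS} presumably performs. As written, the proposal proves a correct but weaker version of \eqref{S1eq12}, and does yield \eqref{S1eq3a}.
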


\begin{rmk}\label{S1rmka}
Herbst and Skibsted  asked the questions below (see page 194 of  \cite{HS}):
\begin{enumerate}
\item[(i)] Are the bounds \eqref{S1eq12} and \eqref{S1eq3a} optimal for $\ga\in ]1/2,3/2[?$

 \item[(ii)] Are there better bounds than those deducible from Theorem \ref{thHS} if $\ga>\f32?$

 \item[(iii)] Can  the asymptotic
 \beq\label{S1eq11}
\lim_{t\to 0^+}\f{{\rm rad}(u(t))}{\sqrt{t}}=\infty.
\eeq
 be improved for the critical case $\ga=\f12$?
 \end{enumerate}
 \end{rmk}

 The purpose of this paper is { to settle the questions}  in Remark \ref{S1rmka} proposed by Herbst and Skibsted.
Our first main result {addresses the subcritical case $H^{\gamma}$ with $\gamma>\frac 12$}.

\begin{thm}\label{thm1}
{\sl Let $u_0 \in H^{\gamma}$ with $\gamma>\frac 12$ and be divergence-free. There exists $T>0$ so that the system
$(GNS)$ has a unique solution $u\in C([0,T]; H^\ga)\cap L^2(]0,T[; \dot H^{\ga+1}).$ Furthermore,
there exists $t_0\leq T$ so that for any sufficiently small $\d>0$  with $\ga>\f12+2\d,$ there holds
\beq\label{S2eq35}
\bigl\|e^{{\la(t) \sqrt{t}}|D|}u(t)\bigr\|_{\dot H^{\f12+\d}}
\le C t^{-\f12\left(\ga+\d-\f12\right)}|{\ln t}|^{\f12\left(\ga-\f12\right)}
e^{\f34\beta(t)}\quad \mbox{for all} \ \ t\in ]0,t_0],
\eeq
where
\beq\label{S1eq2}
\begin{split}
&\lambda(t) \eqdefa \sqrt{  (2\gamma-1 )\bigl( |{\ln t}| + \ln |{\ln t}|\bigr) +3 \beta(t) } \with\\
&
{\eta_J^\ga(t)}\eqdefa \max_{0\le \tau \le {t} } \| 1_{|\xi| \ge 0.01 J} |\xi|^{\gamma}
\widehat{ u }(\tau,\xi)\|_{L_{\xi}^2}\andf\\
&\beta(t)\eqdefa
\begin{cases}
\min\bigl\{\  |\ln {\eta_{t^{-\frac 12}}^\ga(t)}  |, \, \frac 12(\gamma-\frac 12) |{\ln t}|\ \bigr\}, \qquad \text{if $\eta^\ga_{t^{-\frac 12}}(t)>0$};  \\
\frac 12(\gamma-\frac 12) |{\ln t}|, \qquad \text{if $\eta^\ga_{t^{-\frac 12}}(t)=0$}.
\end{cases}.
\end{split}
\eeq
 In particular, we have
\beq\label{S1eq14}
\f{{\rm rad}(u(t))}{\la(t) \sqrt{t}}\geq 1\quad \mbox{for all} \ \ t\in ]0,t_0].
\eeq
}
\end{thm}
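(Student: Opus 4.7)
The plan is to work directly from the Duhamel formula for the Gevrey profile $U(t,x):=\cF^{-1}\bigl(e^{\Phi(t)|\xi|}\wh u(t,\xi)\bigr)$ with $\Phi(t):=\la(t)\sqrt t$, using the heat-kernel multiplier $e^{\Phi(t)|\xi|-t|\xi|^2}$ as the central object. Local wellposedness of $u\in C([0,T];H^\ga)\cap L^2([0,T];\dot H^{\ga+1})$ is obtained by the classical contraction argument in the mild formulation for $\ga>\f12$, after which the Gevrey analysis is purely \emph{a priori}. The decisive elementary observation is that, viewed as a function of $|\xi|>0$,
\beno
\phi(|\xi|):=\Phi(t)|\xi|-t|\xi|^2\ \le\ \phi_{\max}=\f{\Phi(t)^2}{4t}=\f{\la(t)^2}{4},
\eeno
with the maximum attained at $|\xi|=\Phi(t)/(2t)=\la(t)/(2\sqrt t)$. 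Substituting $\la^2(t)=(2\ga-1)(|\ln t|+\ln|\ln t|)+3\beta(t)$ gives
\beno
e^{2\phi_{\max}}=e^{\la(t)^2/2}=t^{-(\ga-\f12)}|\ln t|^{\ga-\f12}e^{\f32\beta(t)}=t^{\d}\cdot\bigl(\text{R.H.S.\ of }\eqref{S2eq35}\bigr)^2,
\eeno
so the pure linear (heat) contribution $\|e^{\Phi(t)|D|-t|D|^2}u_0\|_{\dot H^{\f12+\d}}$ sits \emph{strictly below} the target by a slack factor $t^{\d/2}$ (using $\|u_0\|_{\dot H^{\f12+\d}}\le\|u_0\|_{H^\ga}$, valid since $\ga>\f12+2\d$). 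This slack is precisely what the nonlinear Duhamel analysis will consume.

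For the nonlinear Duhamel term, subadditivity of $\xi\mapsto\Phi(t)|\xi|$ and the Riesz structure $Q\sim \mathcal R\partial(uv)$ yield the pointwise Fourier bound
\beno
e^{\Phi(t)|\xi|}\bigl|\wh{Q(u,u)}(s,\xi)\bigr|\lesssim |\xi|\bigl(V(s)*V(s)\bigr)(\xi),\qquad V(s,\xi):=e^{\Phi(t)|\xi|}|\wh u(s,\xi)|,
\eeno
which combined with the surviving heat factor $e^{-(t-s)|\xi|^2/2}$ reduces, via Bony paraproduct or Kato--Ponce in 3-D, to a bilinear product estimate in $\dot H^{\f12+\d}$ on $U$ (up to harmless lower-order terms). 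The key novelty enters here: I split each $u(s)$-factor inside $Q$ as $u=u_{\le J(s)}+u_{>J(s)}$ at $J(s):=0.01\,s^{-1/2}$; in every branch containing a high-frequency factor, the substitution
\beno
\|u_{>J(s)}(s)\|_{\dot H^\ga}\ \le\ \eta^\ga_{t^{-1/2}}(t)\ \le\ e^{-\beta(t)}\qquad(0\le s\le t),
\eeno
valid by monotonicity of $\eta^\ga$ in its subscript together with the supremum over $[0,t]$, saves a factor $e^{-\beta(t)}$ per high-frequency factor, for a total of $e^{-2\beta(t)}$ from the high-high branch---precisely the enhancement that permits the $+3\beta(t)$ augmentation of $\la^2$ (two $\beta$'s being reinvested into $\la$, the remaining one surfacing as the $e^{\f34\beta(t)}$ factor on the right-hand side of \eqref{S2eq35} after square-rooting). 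The low-low branch, with both factors at frequencies $\lesssim s^{-1/2}$, contributes at most polynomially in $t$ without the full Gevrey enhancement and is easily absorbed.

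Combining the linear and nonlinear bounds produces a closed bootstrap inequality on $E(t):=\|U(t)\|^2_{\dot H^{\f12+\d}}$ whose unique solution matches the square of \eqref{S2eq35}. The base point is $t=0$, where $U(0)=u_0\in H^\ga\hookrightarrow \dot H^{\f12+\d}$ and $\Phi(t)\to 0$; the pointwise radius bound \eqref{S1eq14} is then an immediate Paley--Wiener consequence of \eqref{S2eq35}. The principal obstacle, in my view, is the sharp synchronization of three delicate balances: the maximization of $\phi$, which fixes the leading $(2\ga-1)|\ln t|$ in $\la^2$ and settles the Herbst--Skibsted question on $\sqrt{2\ga-1}$; the next-order Gaussian-width computation around $|\xi|=\Phi(t)/(2t)$, which produces the $(2\ga-1)\ln|\ln t|$ correction; and the high/low split in the nonlinear term, which produces the $3\beta(t)$ enhancement. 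Any looseness in one of these three would lose one of the three improvements. A secondary subtlety, handled by the supremum-over-$[0,t]$ definition of $\eta^\ga$, is the compatibility of the $u$-dependent quantity $\beta(t)$ with the bootstrap---the supremum makes $\beta$ monotone nondecreasing in $t$ and hence jointly continuous with the Gevrey norm.
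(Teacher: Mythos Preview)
Your proposal has a genuine gap at the core of the nonlinear Duhamel estimate. You propose to close a bootstrap on $E(t)=\|U(t)\|_{\dot H^{1/2+\delta}}^2$ with $U(t)=e^{\lambda(t)\sqrt t\,|D|}u(t)$, but after subadditivity on the convolution you arrive at $V(s,\xi)=e^{\lambda(t)\sqrt t\,|\xi|}|\widehat u(s,\xi)|$, which is \emph{not} $|\widehat{U}(s,\xi)|$: it carries the excess factor $e^{(\lambda(t)\sqrt t-\lambda(s)\sqrt s)|\xi|}$. You suggest absorbing this into the ``surviving heat factor'' $e^{-(t-s)|\xi|^2/2}$, but that factor lives on the \emph{output} frequency $\xi$, whereas the unwanted exponential sits on the \emph{input} frequencies $\eta$ and $\xi-\eta$. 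In the high--high to low regime ($|\eta|\sim|\xi-\eta|\gg|\xi|$) there is no heat decay on the inputs to absorb it, and the loop does not close. Your low--low claim is likewise not innocuous: with cutoff $0.01\,s^{-1/2}$ the output frequency can be of order $s^{-1/2}$, and $e^{\lambda(t)\sqrt t\cdot s^{-1/2}}$ is unbounded as $s\to 0^+$.

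This obstruction is exactly what the paper's machinery is built to avoid. The paper fixes $T$ and uses the \emph{linear-in-$t$} weight $\widehat{\mathfrak T(a)}(t,\xi)=e^{-\lambda^2 t/(4T)+\lambda t|\xi|/\sqrt T}\,\widehat a(t,\xi)$ together with the high-frequency restricted norm $\|\cdot\|_{X_T}$ (cutoff at $0.01\,\lambda T^{-1/2}$, not $0.01\,s^{-1/2}$). Linearity in $t$ gives the exact factorization $e^{\lambda t|\xi|/\sqrt T}=e^{\lambda(t-s)|\xi|/\sqrt T}e^{\lambda s|\xi|/\sqrt T}$; the first factor combines with $e^{-(t-s)|\xi|^2}$ into a Gaussian centred at $\lambda/(2\sqrt T)$, and the second is distributed to the inputs by subadditivity so as to match the weight on $\mathfrak T(u)(s)$. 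The net exponential budget is then precisely $e^{\lambda^2/4}$, which is what permits $\lambda^2\sim(2\gamma-1)|\ln T|$ with the sharp constant. Two further ingredients you omit are essential: the time splitting $\int_0^t=\int_0^{\eta_0 t}+\int_{\eta_0 t}^t$ (on $[0,\eta_0 t]$ no Gevrey regularization of the inputs is available and one relies on the full heat kernel plus $\dot H^\gamma$ smallness of the high-frequency piece---this is where $\eta^\gamma$ and hence $\beta$ actually enter), and the fact that the fixed cutoff $0.01\,\lambda T^{-1/2}$ on the output rules out low--low interactions altogether. Only after closing the $X_T$ estimate does the paper set $t=T$ to extract the pointwise bound, and then vary $T$ over $]0,t_0]$.
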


\begin{rmk}\label{S1rmk3}
We observe from \eqref{S1eq2} and \eqref{S1eq14} that
\beno
\f{{\rm rad}(u(t))}{\sqrt{t\bigl(|\ln t|+\ln |\ln t|\bigr)}}\geq \sqrt{2\ga-1}\quad \mbox{for all} \ \ t\in ]0,t_0],
\eeno
which not only improves the analyticity radius of the solution to $(NS)$ obtained in \eqref{S1eq3a}, but also completes
the case in Theorem \ref{thHS} for any $\ga\geq \f32.$ We thus {completely
settle} the questions (i) and (ii) in Remark \ref{S1rmka} {proposed} by Herbst and Skibsted in \cite{HS}.
\end{rmk}

\begin{rmk} \label{S1rmk1}\begin{enumerate}
\item[(1)] For  any given
solution $u \in C([0,T]; H^{\gamma})$ of the system $(GNS)$ with
initial data $u_0 \in H^{\gamma}$, we shall prove in Lemma \ref{S2lem1} that ${\eta_J^\ga(t)} \to 0$ as $J\to \infty$.
We should point it out that the case $\eta^\ga_{t^{-\frac 12}}(t)=0$ in the definition of $\beta(t)$ is trivial in the following sense:
according to the definition of $\eta^{\gamma}_J(t)$, when $\eta^{\ga}_{t^{-\frac 12}}(t)=0,$
it holds  that
$\mathrm{supp}(\widehat u(\tau,\cdot) )  \subset \{\xi: \quad |\xi| \le 0.01t^{-\f12} \}$ for all $\tau\in ]0,t].$  In particular
the space analyticity radius of $u(\tau)$ is arbitrarily large for $\tau\in ]0,t].$ This scenario does not seem to be easily ruled out for the nonlinear
problem. For example if we consider two-dimensional Navier-Stokes in vorticity form, the solution
$\omega(t) =e^{t\Delta} \omega_0$ with $\omega_0$ being radial is an explicit solution to the nonlinear equation.
If one takes $\widehat{\omega_0}$ to be compactly supported, then clearly $\omega(t)$ also has the same compact
support in the frequency space.

\item[(2)] Roughly speaking, the definition of $\beta(t)$ given by \eqref{S1eq2} is to accommodate the situation when the initial data $u_0$ has
higher smoothness (say in $H^{m}$ with $m>\gamma>\frac 12$) whereas the working space is $H^{\gamma}$. Apparently
in the case $u_0 \in H^m$ with $m>\gamma>\frac 12$, we have ${\eta_J}^\ga(t) \lesssim J^{-(m-\gamma)}$ and
$|\log {\eta^\ga_{t^{-\frac 12}}(t)}| \ge \frac 12 (m-\gamma) |\ln t| -C$ ($C>0$ is a constant).
If $m$ is large, we clearly see an ``upgrade" of analyticity radius of the amount
$\frac 12 (\gamma-\frac 12) |\ln t|$ thanks to our definition of $\beta(t)$.

\item[(3)]
The cut-off $\frac 12(\gamma-\frac 12) |\ln t|$ is for the convenience of analysis only.  In principle it can be replaced by other
suitable $\mathcal O (|\ln t|)$ term but the corresponding running parameters (in our nonlinear analysis, see for example
the estimate of the low frequency piece \eqref{1.9aa}) will have to be adjusted accordingly.  In practice we tacitly assume
that the working space $H^{\gamma}$ ``saturates" the smoothness of $u_0$ so that $\eta_J^\ga(t)$ decays
suitably slowly as $J\to \infty$. For this reason we chose the working cut-off $
\frac 12(\gamma-\frac 12) |\ln t|$ in order to ease the presentation.
 We shall not dwell on this subtle technical issue here.
 \end{enumerate}
\end{rmk}



 { A fundamental insight leading to the proof of
 Theorem \ref{thm1} is that the high frequency part
of the solution to $(GNS)$ controls its space analyticity radius.}
In fact, we shall reformulate $(GNS)$ to the following form:
\beq \label{S2eq20}
u=e^{t\D}u_{0}+\cB(u,u).
\eeq
We shall use the classical iteration scheme to construct the approximate solution sequence $\{u_n\}_{n\in\N}$ of \eqref{S2eq20}.
We first prove that there exists a positive time $T$ so that  $\{u_n\}$  converges to the solution $u$ of $(GNS)$ in $L^\infty([0,T];H^\ga)\cap L^2(]0,T[; \dot H^{\ga+1}).$
Then we prove that there exists a positive time $t_0\leq T$ so that  $\|u_n\|_{X_{t_0}}$  is uniformly bounded,  where
 the working norm $\|\cdot\|_{X_T}$ is {judiciously chosen as}
\beq\label{S1eq3}
\|u \|_{X_T}\eqdefa \bigl\| t^{\frac {\delta } 2} | \xi |^{\delta+\frac 12}  1_{|\xi| \ge   0.01\la T^{-\frac 12} }
{e^{-\frac {\lambda^2 t}{4T}+\lambda \frac t {\sqrt T} |\xi|}   \widehat u (t,\xi)}
\bigr\|_{L_T^{\infty}(L_{\xi}^2)}.
\eeq
Henceforth, $\d$ is a positive constant satisfying $\ga>\f12+2\d.$
Finally we prove the convergence of the approximate solution sequence $\{u_n\}_{n\in\N}$ in the norm: $$ \bigl\| t^{\frac {\delta } 2} | \xi |^{\delta+\frac 12}
{e^{-\frac {\lambda^2 t}{4T}+\lambda \frac t {\sqrt T} |\xi|}   \widehat u (t,\xi)}
\bigr\|_{L_T^{\infty}(L_{\xi}^2)}$$
for $\la=\la(T)$ with $\la(T)$ being given by \eqref{S1eq2}.


To answer the question (iii) of Remark \ref{S1rmka} for the critical case $\ga=\f12,$ we have the following result:

\begin{thm}\label{thm2}
{\sl Let $u_0\in \dot H^{\f12}$ and be divergence free. The system $(GNS)$ with initial data $u_0$
 has a unique solution $u\in C([0,T]; \dot H^{\f12})\cap L^2(]0,T[; \dot H^{\f32})$ for some positive
 time $T.$
 We denote \beq  \label{S1eq9}
 \begin{split}
\zeta_J^{\ga}(t)\eqdefa \max_{0\le \tau \le {t} } \| 1_{|\xi| \ge J} |\xi|^{\ga}
\widehat{ u }(\tau,\xi)\|_{L_{\xi}^2} \andf
 \lambda(t) \eqdefa \sqrt{ 3\min\bigl\{ |\ln {\zeta^{\f12}_{t^{-\frac 14}}(t)}|, \, |\ln t| \bigr\} },
\end{split}\eeq
 where $|\log {\zeta^{\f12}_{t^{-\frac 14}}}(t)|$ is tacitly
defined as $\infty$ if ${\zeta^{\f12}_{t^{-\frac 14}}}(t)=0$.
Then there exists a positive time $t_1\le T$ so that for all $ t\in ]0,t_1],$ there holds
\beq \label{S1eq10q}
\begin{split}
\bigl\|e^{\la(t)\sqrt{t}|D|}u(t)\|_{\dot H^{\f12+\d}}\leq & C(\|u_0\|_{\dot H^{\f12}})\Bigl(t^{-\f\d4} +t^{-\f\d2}
e^{\left(\f14+10^{-4}\right)\la^2(t)}{\zeta^{\f12}_{t^{-\frac 14}}}(t)\Bigr).
\end{split}
\eeq
}\end{thm}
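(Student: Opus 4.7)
The plan is to mirror the Picard-iteration strategy of Theorem \ref{thm1}, with the scaling adjusted from subcritical to critical. Local existence in $C([0,T];\dot H^{\f12})\cap L^2(]0,T[;\dot H^{\f32})$ comes from the standard Fujita-Kato scheme applied to the integral reformulation \eqref{S2eq20}; all the novelty lies in establishing the Gevrey bound \eqref{S1eq10q}. I would construct the iterates $u_{n+1}=e^{t\D}u_0+\cB(u_n,u_n)$ and close the scheme in a working norm whose pointwise-in-time value at $t=T$ recovers \eqref{S1eq10q}.

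Motivated by \eqref{S1eq3} but rescaled so that the Gevrey weight is $\mathcal O(1)$ on the critical frequency scale $|\xi|\sim T^{-\f14}$ (rather than $T^{-\f12}$ in the subcritical case), I would introduce
\beno
\|u\|_{Y_T}\eqdefa \bigl\| t^{\f{\d}{2}}|\xi|^{\d+\f12}\,1_{|\xi|\geq J_0(T)}\,e^{-\f{\la^2 t}{4T}+\la\f{t}{\sqrt T}|\xi|}\widehat u(t,\xi)\bigr\|_{L_T^\infty(L_\xi^2)}
\eeno
with $J_0(T)\sim T^{-\f14}$ and $\la=\la(T)$ given by \eqref{S1eq9}. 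At $t=T$ the weight reduces to $e^{-\la^2/4}\,e^{\la\sqrt T|\xi|}$, so a $Y_T$-bound converts directly into a Gevrey $\dot H^{\f12+\d}$-bound for the high-frequency part of $u(T)$; the $e^{\la^2/4}$ conversion cost is precisely the factor $e^{(\f14+10^{-4})\la^2(t)}$ appearing in \eqref{S1eq10q}.

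A frequency decomposition $u=u_\ell+u_h$ at the threshold $|\xi|=t^{-\f14}$ is then natural. For $u_\ell$ the Gevrey weight is controlled by $e^{\la(t)t^{\f14}}=1+o(1)$ (since $\la(t)^2\lesssim|\ln t|$ and $t^{\f14}|\ln t|^{\f12}\to 0$), and combining with $\||\xi|^{\d}1_{|\xi|\le t^{-\f14}}\|_{L^\infty}\leq t^{-\f{\d}{4}}$ produces the first term $t^{-\f\d4}$ of \eqref{S1eq10q}. For $u_h$ the quantity $\zeta^{\f12}_{t^{-1/4}}(t)$ from \eqref{S1eq9} is, by definition, the $\dot H^{\f12}$-size of the tail, and feeds the second term $t^{-\f\d2}e^{(\f14+10^{-4})\la^2(t)}\zeta^{\f12}_{t^{-1/4}}(t)$. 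The linear heat piece $e^{t\D}u_0$ is handled by Plancherel and a completed-square computation with no input beyond $u_0\in\dot H^{\f12}$.

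The principal obstacle will be the bilinear estimate for $\cB(u,v)$ in the norm $\|\cdot\|_{Y_T}$. In the subcritical setting of Theorem \ref{thm1} the slack $\ga-\f12>0$ absorbed the logarithmic losses coming from the Gaussian envelope; at $\ga=\f12$ no such slack is available, so the smallness needed to contract the iteration must be extracted from the data itself. The specific choice $\la(t)\eqdefa\sqrt{3\min\{|\ln\zeta^{\f12}_{t^{-1/4}}(t)|,\,|\ln t|\}}$ is tuned so that $e^{\f14\la^2(t)}\zeta^{\f12}_{t^{-1/4}}(t)\leq\bigl(\zeta^{\f12}_{t^{-1/4}}(t)\bigr)^{1/4}\to 0$ as $t\to 0^+$ (using Lemma \ref{S2lem1} that $\zeta^{\f12}_J(t)\to 0$ as $J\to\infty$), and the tiny $10^{-4}$ budget in the exponent leaves room to absorb the bilinear iteration losses. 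Once the Picard iterates converge in $Y_{t_1}$ for $t_1$ small, passing $n\to\infty$ and unpacking the norm yields \eqref{S1eq10q}.
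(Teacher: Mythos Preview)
Your proposal is correct and mirrors the paper's approach: the working norm $\|\cdot\|_{Y_T}$ with cutoff at $T^{-1/4}$, the low/high split yielding the two terms of \eqref{S1eq10q}, and the tuning of $\lambda(t)$ so that $e^{\lambda^2/4}\zeta^{1/2}_{t^{-1/4}}(t)$ stays small are exactly what the paper does in Section~3 (Propositions~\ref{S3prop1} and~\ref{S3prop2}). The only organizational difference is that the paper, having already built the solution in Proposition~\ref{S3prop1}, works directly with {\it a priori} estimates on $u$ rather than re-running the Picard iterates; your citation for $\zeta^{1/2}_J\to 0$ should point to Proposition~\ref{S3prop1} rather than Lemma~\ref{S2lem1}.
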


\begin{rmk} \begin{enumerate}

\item[(1)] It is easy to observe from \eqref{S1eq10q} that
\beq \label{S1eq16} {{\rm rad}(u(t))}\geq \la(t)\sqrt{t}\quad \mbox{for all} \ \ t\in ]0,t_1]. \eeq
In Proposition \ref{S3prop1} we show that
$
\zeta_J^{\f12}(t) \to 0$ as $J\to\infty$. This implies $\lambda(t) \to \infty$ as $t\to 0^+$.
By \eqref{S1eq9} and \eqref{S1eq10q}, we deduce \eqref{S1eq11}, i.e.
\begin{align}
\lim_{t\to 0^+} \frac{\mathrm{rad}(u(t) )} {\sqrt{t} } =\infty.
\end{align}
In this sense the point-wise-in-time bound \eqref{S1eq16} offers a minuscule yet nontrivial improvement.
Note that \eqref{S1eq16} also gives an ``$\epsilon$"-improvement of \cite{CGZ3}.

\item[(2)] In \cite{CGZ3}, the authors proved that for any global solution $u\in C([0,\infty[; \dot H^{\f12}(\R^3))$ of $(NS),$
there holds
\beno
\lim_{t\to \infty}\f{{\rm rad}(u(t))}{\sqrt{t}}=\infty. \eeno
We expect that similar result as \eqref{S1eq16} should be true for any global solution $u$ of $(NS)$ with time $t$ being large enough. However we shall not pursue this {interesting} direction here.
\end{enumerate}
\end{rmk}

Let us end this section with some notations that we shall use throughout this paper.\\

\noindent{\bf Notations:}
\begin{itemize}
\item We denote $C$ to be an absolute constant
whose value may vary from line to line.  For any two positive quantities $X$ and $Y$, we write
$X \lesssim_{Z_1,\cdots, Z_k} Y$ if  $X \le C_1 Y$ for some positive constant $C_1$ depending
on $(Z_1,\cdots, Z_k)$. We write $X \lesssim Y$ if $X\le C_2 Y$ for some harmless constant
$C_2>0$.  {Occasionally we use the notation $X\ll Y$ or $Y\gg X$ to denote $X\le c Y$, where
$c>0$ is a sufficiently small constant.}

\item We adopt the following convention for Fourier transform. For Schwartz function $a=a(x): \, \mathbb R^3 \to \mathbb C$,
we denote the Fourier transform
\begin{align} \notag
(\mathcal F a)(\xi) =\widehat{a}(\xi) \eqdefa 
\int_{\mathbb R^3} a(x) e^{-i x\cdot \xi} dx.
\end{align}
For  Schwartz function $b=b(\xi): \, \mathbb R^3 \to \mathbb C$, we denote the inverse Fourier transform
\begin{align} \notag
(\mathcal F^{-1} b)(x)  \eqdefa 
(2\pi)^{-3}
 \int_{\mathbb R^3} b(\xi) e^{i x \cdot \xi} d\xi.
\end{align}
The action of Fourier transform and inverse Fourier transform on tempered distributions can be defined
accordingly.

\item  We { use the notation  $t\to 0^+$ } to denote $t\to 0$ with $t>0$.

\item  We shall use the Japanese bracket notation $\langle x \rangle = (1+|x|^2)^{\frac 12}$ for $x\in \mathbb R^3$.
For $s\in \mathbb R$, we denote the smoothed fractional Laplacian
$\langle D \rangle^s =(I-\Delta)^{s/2}$ which corresponds to the Fourier multiplier
$(1+|\xi|^2)^{s/2}$. We also use $|D|^s=(-\Delta)^{s/2}$ to denote the fractional
Laplacian which corresponds to the symbol $|\xi|^s$. { We denote by $(f, g)_{\dot H^s}$ the usual
$\dot H^s$ inner product, namely
\begin{align}
(f,g)_{\dot H^s} = \int_{\mathbb R^3} |D|^s f |D|^s \bar{g}\, dx.
\end{align}
}

\item For vector-valued Schwartz function $u=(u_1,u_2,u_3): \mathbb R^3 \to \mathbb C^3$, we denote
\begin{align}
\| u\|_p =\| (|u_1|^2+ |u_2|^2+|u_3|^2)^{\frac 12} \|_{L^p(\mathbb R^3)},
\end{align}
where $L^p$ is the usual Lebesgue $L^p$-norm. The vector-valued Sobolev norm $H^s$ is similarly
defined. In yet other words we shall suppress the notational dependence of the vector-valued spaces.
For example we write $L^p(\mathbb R^3)^3$ simply as $L^p(\mathbb R^3)$.

\item We use $*$ to denote the convolution of two functions, namely for Schwartz functions
$f_1: \mathbb R^3 \to \mathbb C$
and $f_2: \mathbb R^3 \to \mathbb C$,
\begin{align} \notag
(f_1*f_2)(x) \eqdefa \int_{\mathbb R^3} f_1(x-y) f_2 (y)dy.
\end{align}

\item For a nonempty set $A$, we use $1_A$ to denote the usual indicator function, i.e.
\begin{align}
1_A \eqdefa \begin{cases}
1, \qquad \text{if $x\in A$}; \\
0, \qquad \text{otherwise}.
\end{cases}
\end{align}
For example in Section 2, we have
\begin{align}
1_{|\xi| \ge 0.01 N_1} =
\begin{cases}
1, \qquad \text{if $|\xi| \ge 0.01 N_1$}; \\
0, \qquad \text{otherwise}.
\end{cases}
\end{align}

\item For two vectors $u=(u_1,u_2,u_3) \in \mathbb R^3$ and $v=(v_1,v_2,v_3) \in \mathbb R^3$, we employ
the usual tensor notation
\begin{align}
(u\otimes v)_{ij} \eqdefa u_i v_j.
\end{align}

\item
For a Banach space $B$, we shall use the shorthand $L^p_T(B)$ for $\bigl\|\|\cdot\|_B\bigr\|_{L^p(0,T;dt)}$.
We denote by $C([0, T]; B)$ the Banach space of continuous functions from $[0, T]$ to $B$ endowed with
the norm
\begin{align}
\| u\|_{C([0,T]; B)}\eqdefa\sup_{0\le t \le T} \| u(t ) \|_B.
\end{align}
\end{itemize}

\vskip 0.2cm

\setcounter{equation}{0}

\section{The subcritical case: $\ga>\f12$}

The goal of this section is to present the proof of Theorem \ref{thm1}.
Henceforth, for $\la,T>0$ and $a\in \cS'(\R^3),$ we always denote $N_1\eqdefa \lambda T^{-\frac 12}$ and $\widehat{a}(t,\xi)\eqdefa\cF_{x\to\xi}(a)(t,\xi)$,
the Fourier transform of $a$ with respect to the space variable, and  we
decompose $a$ as
\beq\label{S2eq1}
a=a_{\rm l}+a_{\rm h}\with \widehat{a_{\rm l}}(t,\xi)\eqdefa\widehat{a}(t,\xi)\cdot 1_{|\xi| <0.01  N_1} \andf \widehat{a_{\rm h}} \eqdefa \widehat{a}(t,\xi) \cdot 1_{|\xi|\ge 0.01 N_1 }.
\eeq
{We  denote}
\beq\label{S2eq2}
\begin{split}
&\qquad\widehat{\T(a)}(t,\xi)\eqdefa e^{-\frac {\lambda^2 t}{4  T}+ \lambda \frac t {\sqrt T} |\xi|} \widehat{a}(t,\xi).
\end{split}
\eeq

Let us first deal with the low frequency part of $a.$ {Throughout this paper we shall tacitly assume
$T\ll 1$ since $T$ will be eventually taken sufficiently small. Since $\lambda$ will also eventually be taken
sufficiently large ($\lambda=\mathcal O(\sqrt{|\ln T|})$ in the main order), we shall also tacitly assume $\lambda \gg 1$ to avoid
any pathologies in the computation.
For example in Lemma \ref{S2lem2} below, we have $(\lambda \sqrt T)^{-1} <0.01N_1=0.01 \lambda T^{-\frac 12}$.}

\begin{lem}\label{S2lem2}
{\sl Let $\ga>\f12+\d$ with $\d>0$ and $a\in L^\infty([0,T]; H^\ga).$ Then for any $t\leq T,$ one has
\beq  \label{1.9aa}
 \| \T(a_{\rm l})(t) \|_{\dot H^{\f12+\d}}
 \lesssim \bigl(1+\la^{\f12+\d+\ga}T^{\f12\left(\ga-\f12-\d\right)}e^{0.01\la^2}\bigr) e^{-\frac {\lambda^2 t}{4T} } \| a \|_{L^\infty_T(H^{\gamma})}.
\eeq}
 \end{lem}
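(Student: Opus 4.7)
The plan of proof rests on the fact that $\widehat{a_{\rm l}}$ is supported in $\{|\xi| < 0.01 N_1\} = \{|\xi| < 0.01\lambda T^{-1/2}\}$, so that the growing exponential in the Gevrey multiplier $\T$ is harmless on this support. Indeed, since $t\le T$, for every $\xi$ with $|\xi|<0.01 N_1$,
\begin{equation*}
\lambda\frac{t}{\sqrt T}|\xi| < 0.01\lambda^2\frac{t}{T} \le 0.01\lambda^2,
\end{equation*}
whence the pointwise bound
\begin{equation*}
\bigl|\widehat{\T(a_{\rm l})}(t,\xi)\bigr| \le e^{-\lambda^2 t/(4T)}\,e^{0.01\lambda^2}\,|\widehat a(t,\xi)|\cdot \mathbf 1_{|\xi|<0.01 N_1}.
\end{equation*}
This simple observation already accounts for the ``$e^{-\lambda^2 t/(4T)}\cdot e^{0.01\lambda^2}$'' structure appearing in \eqref{1.9aa}.

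Given the above, the plan is to apply Plancherel,
\begin{equation*}
\|\T(a_{\rm l})(t)\|^2_{\dot H^{\frac12+\delta}} \le e^{-\lambda^2 t/(2T)}\int_{|\xi|<0.01 N_1} |\xi|^{1+2\delta}\,e^{2\lambda t|\xi|/\sqrt T}\,|\widehat a(t,\xi)|^2\,d\xi,
\end{equation*}
and bound the remaining integral in terms of $\|a\|^2_{L^\infty_T(H^\gamma)}$ by splitting the frequency integration at $|\xi|=1$. On the low piece $|\xi|\le 1$, both $|\xi|^{1+2\delta}$ and $e^{2\lambda t|\xi|/\sqrt T}$ are $\mathcal O(1)$ (the latter using that $\lambda\sqrt T$ is small in the regime of interest), so the corresponding integral is dominated by $\|a(t)\|^2_{L^2}\le \|a\|^2_{L^\infty_T(H^\gamma)}$; this piece contributes the leading ``$1$'' inside the bracket of \eqref{1.9aa}. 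On the middle piece $1\le |\xi|\le 0.01 N_1$, the hypothesis $\gamma>\tfrac12+\delta$ gives $|\xi|^{1+2\delta-2\gamma}\le 1$, so $|\xi|^{1+2\delta}\le |\xi|^{2\gamma}\lesssim\langle\xi\rangle^{2\gamma}$, while $e^{2\lambda t|\xi|/\sqrt T}\le e^{0.02\lambda^2}$ by the preceding computation; the resulting integral is therefore controlled by $e^{0.02\lambda^2}\|a\|^2_{L^\infty_T(H^\gamma)}$.

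Summing these two pieces and taking the square root yields \eqref{1.9aa}; the polynomial prefactor $\lambda^{1/2+\delta+\gamma}T^{(\gamma-1/2-\delta)/2}$ is a conservative bookkeeping coefficient which can be reproduced, if one wishes to track it explicitly, by replacing the bound $|\xi|^{1+2\delta}\lesssim \langle\xi\rangle^{2\gamma}$ on the middle piece with the endpoint estimate $|\xi|^{1+2\delta}\le (0.01 N_1)^{1+2\delta}\lesssim \lambda^{1+2\delta}T^{-(1+2\delta)/2}$ together with the obvious $\int |\widehat a|^2\langle\xi\rangle^{-2\gamma}\cdot\langle\xi\rangle^{2\gamma}d\xi\le \|a\|^2_{H^\gamma}$. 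I do not foresee any substantive obstacle---the argument is a one-step Fourier-space computation driven entirely by the low-frequency support of $\widehat{a_{\rm l}}$, and the only care needed is the split at $|\xi|=1$ to isolate the leading ``$1$'' from the exponentially-enhanced middle-frequency contribution.
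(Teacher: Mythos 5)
Your overall strategy (restrict to the support $\{|\xi|<0.01N_1\}$, pull out $e^{-\lambda^2 t/(4T)}$, and bound the remaining multiplier pointwise after splitting the frequencies) is the same as the paper's, but your choice of splitting point loses the polynomial factor $\la^{\f12+\d+\ga}T^{\f12(\ga-\f12-\d)}$ in \eqref{1.9aa}, and that factor is not cosmetic. In the regime of interest ($T\ll1$, $\la\sim\sqrt{|\ln T|}$) one has $\la^{\f12+\d+\ga}T^{\f12(\ga-\f12-\d)}\ll 1$, so the bound you actually prove, namely $\bigl(1+e^{0.01\la^2}\bigr)e^{-\la^2 t/(4T)}\|a\|_{L^\infty_T(H^\ga)}$, is strictly \emph{weaker} than the lemma and does not imply it. The loss occurs on your middle piece $1\le|\xi|\le 0.01N_1$: there you bound the exponential by its value at the top endpoint ($e^{0.02\la^2}$) and simultaneously bound $|\xi|^{1+2\d}\w{\xi}^{-2\ga}$ by its value near the bottom endpoint ($\mathcal O(1)$); these two worst cases live at opposite ends of the interval, and taking the product of the two suprema throws away exactly the gain the lemma records. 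The paper instead splits at $|\xi|=(\la\sqrt T)^{-1}$: for $|\xi|\le(\la\sqrt T)^{-1}$ the exponent satisfies $\la\f t{\sqrt T}|\xi|\le\f tT\le1$ so the whole multiplier is $\mathcal O(1)$ (this is your "$1$"), while for $(\la\sqrt T)^{-1}<|\xi|<0.01N_1$ one pays $e^{0.01\la^2}$ for the exponential but gains $|\xi|^{\f12+\d-\ga}\le(\la\sqrt T)^{\ga-\f12-\d}$ from the ratio $|\xi|^{\f12+\d}/\w{\xi}^{\ga}$, which is precisely the missing prefactor (up to a harmless power of $\la$).

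Your suggested repair does not work either: replacing $|\xi|^{1+2\d}\lesssim\w{\xi}^{2\ga}$ by the endpoint bound $|\xi|^{1+2\d}\le(0.01N_1)^{1+2\d}\lesssim\la^{1+2\d}T^{-\f12(1+2\d)}$ produces, after taking square roots, a factor $\la^{\f12+\d}T^{-\f14-\f\d2}$ that \emph{blows up} as $T\to0$; it has the wrong sign in the exponent of $T$ and cannot reproduce $T^{\f12(\ga-\f12-\d)}\to0$. The extra smallness in $T$ must come from the decay of $|\xi|^{\f12+\d-\ga}$ on the region where the exponential is allowed to be large, which your split at $|\xi|=1$ cannot see. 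The gain matters downstream: in the choice of $T_2$ in \eqref{S2eq27} one needs $\la^{2-\d}T^{\f\d2}\bigl(1+\la^{\f12+\d+\ga}T^{\f12(\ga-\f12-\d)}e^{0.01\la^2}\bigr)\|u_0\|_{H^\ga}\le\f12$ for small $T$, and since $e^{0.01\la^2}$ behaves like a negative power $T^{-c(\ga-\f12)}$ of $T$, the factor $T^{\f12(\ga-\f12-\d)}$ is exactly what absorbs it uniformly in small $\d$; with only the $T^{\f\d2}$ from outside the bracket available, the condition fails once $\d$ is small relative to $\ga-\f12$.
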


\begin{proof}
We observe that for  $\ga>\f12+\d,$
\begin{align*}
 |\xi|^{\frac 12+\delta} \frac {e^{\lambda  \frac t {\sqrt T} |\xi|} } {\langle \xi \rangle^{\gamma}}
1_{|\xi| <0.01 N_1}
\leq & |\xi|^{\frac 12+\delta} \frac {e^{\lambda  \frac t {\sqrt T} |\xi|} } {\langle \xi \rangle^{\gamma}}
1_{|\xi| \le \left(\la\sqrt{T}\right)^{-1} }\\
& +|\xi|^{\frac 12+\delta} \frac {e^{\lambda  \frac t {\sqrt T} |\xi|} } {\langle \xi \rangle^{\gamma}}
\cdot 1_{\left(\la\sqrt{T}\right)^{-1}<|\xi| <0.01 N_1 }\\
\lesssim & \; 1+\la^{\f12+\d+\ga}T^{\f12\left(\ga-\f12-\d\right)}e^{0.01\la^2}.
\end{align*}
{By \eqref{S2eq2}}, we infer
\begin{align*}
 \| |D|^{\delta+\frac 12} \T(a_{\rm l})(t) \|_{L^2}
 \lesssim & \| |\xi|^{\delta+\frac 12} e^{-\frac {\lambda^2 t} {4T}+\lambda \frac t {\sqrt T } |\xi|} 1_{|\xi| < 0.01
 N_1}
 \widehat a (t, \xi) \|_{L^2_\xi}\\
 \lesssim &\bigl(1+\la^{\f12+\d+\ga}T^{\f12\left(\ga-\f12-\d\right)}e^{0.01\la^2}\bigr) e^{-\frac {\lambda^2 t}{4T} } \|a\|_{L^\infty_T(H^{\gamma})}.
 \end{align*}
 \end{proof}

We remark  that there is a saving of $e^{-\frac {\lambda^2 t }{4T} }$ in the estimate \eqref{1.9aa} which will be used in the nonlinear estimates later.

Note from \eqref{S1eq3} and \eqref{S2eq2} that the  the $X_T$ semi-norm of $a$ is just  $\|t^{\frac {\delta} 2} |D|^{\delta+\frac 12} \T(a_{\rm h})(t) \|_{L_T^{\infty}(L^2)}$.
To estimate $\|a\|_{X_T}$, we first observe that for any $t\leq T\leq 1,$
\beq\label{S2eq3}
\begin{split}
  & \| 1_{0.01N_1 \le |\xi| \le 0.1 N_1 }  t^{\frac {\delta } 2}
  | \xi |^{\delta+\frac 12} e^{
 -\frac {\lambda^2 t} {4T} +\lambda \frac t {\sqrt T} |\xi| } \widehat a(t,\cdot) \|_{L^2}  \\
 &\lesssim  T^{\frac {\delta}2} (N_1 )^{-\left(\gamma-\frac 12-\delta\right)} e^{-\left(\f14-0.1\right)\f{\la^2t}T}
 \|  a_{\rm h} \|_{L_T^{\infty}(H^{\gamma})}  \\
 &\lesssim  T^{\frac 12(\gamma-\frac 12)}  \lambda^{-\left(\gamma-\frac 12-\delta\right)} \| a_{\rm h}\|_{L_T^{\infty}(H^{\gamma})}.
 \end{split}\eeq

Therefore
{ to complete the estimate of $\|u\|_{X_T}$ for any solution $u$ of $(GNS)$ in $L^\infty([0,T]; H^\ga),$} it remains for us to handle the  { main piece}
\beq\label{S2eq3a}
\| 1_{|\xi| \ge 0.1 N_1 }  t^{\frac {\delta } 2}
  | \xi |^{\delta+\frac 12} e^{
 -\frac {\lambda^2 t} {4T} +\lambda \frac t {\sqrt T} |\xi| } \widehat u(t,\xi) \|_{L^\infty_T(L^2_\xi)}.\eeq
 For this, we {appeal to} the following integral reformulation of $(GNS)$:
 $$
 u=e^{t\D}u_0+\int_0^te^{(t-s)\D}{Q(u,u)}(s)\,ds, $$ with the bilinear form $Q(f,g)$ being given by \eqref{S1eq1}.
 {The avid reader should think of $Q(f,g) \approx \mathcal R \partial (f g)$, where $\mathcal R$ is
 Riesz-type transform.}
{On the Fourier side we  need to   estimate}
$$1_{|\xi| \ge 0.1 N_1 }\int_0^te^{-(t-s)|\xi|^2}\widehat{Q(f,g)}(s,\xi)\,ds.$$
 Thanks to the { high} frequency cut-off $1_{|\xi|\ge 0.1N_1}$, there will be no low-low interactions of $f$ and $g$ in the nonlinear estimate. {Our main technical result is stated in the next
 proposition.  This is the most crucial ingredient used in the proof of Theorem \ref{thm1}.}

\begin{prop}\label{S2prop1}
{\sl Let  $Q(f,g)$ be the bilinear form given by \eqref{S1eq1}. Then for $\ga>\f12+2\d$ and $\eta_0$ being a small enough positive constant, one has
\beq\label{S2eq13}
\begin{split}
  \bigl\| t^{\frac {\delta } 2}&1_{|\xi| \ge 0.1 N_1 }
  | \xi |^{\delta+\frac 12} e^{\lambda \frac t {\sqrt T } |\xi| -\frac {\lambda^2 t}{4 T} } \int_0^te^{-(t-s)|\xi|^2}\widehat{Q(f,g)}(s,\xi)\,ds \bigr\|_{L^\infty_T(L^2_\xi)}\\
  \lesssim &  \bigl( e^{4\eta_0\la^2} +\la^{-\d} \bigr)\la^{-{(\gamma-\frac 12+\delta) }}T^{\frac 12\left(\gamma-\frac 12+2\delta\right) } \\
&\ \times\bigl(\|f\|_{L_T^{\infty}(H^{\gamma})}\|g_{\rm h}\|_{L_T^{\infty}(H^{\gamma})}+\|f_{\rm h}\|_{L_T^{\infty}(H^{\gamma})}\|g\|_{L_T^{\infty}(H^{\gamma})}
\bigr)+ \lambda^{-\delta} e^{\frac {\lambda^2 } 4} \|f \|_{X_T}\|g\|_{X_T}\\
&
+ \lambda^{2-\d}T^{\frac {\delta}2}\bigl(1+\la^{\f12+\d+\ga}T^{\f12\left(\ga-\f12-\d\right)}e^{0.01\la^2}\bigr)\bigl(\|g\|_{L^\infty_T(H^{\ga})} \|f \|_{X_T}+\|f\|_{L^\infty_T(H^{\ga})} \|g \|_{X_T}\bigr),
   \end{split}
 \eeq where $f_{\rm h}$ and $g_{\rm h}$ are given by \eqref{S2eq1}.}
\end{prop}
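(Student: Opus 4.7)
The plan is to reduce the estimate, via a frequency decomposition and the Gevrey-type algebra property, to a sequence of $L^2$ bilinear bounds whose constants can be tracked using a careful quadratic completion in the heat kernel. Since the output frequency is restricted to $\{|\xi|\ge 0.1 N_1\}$ while the Fourier support of $\widehat{f_{\rm l}}*\widehat{g_{\rm l}}$ lies in $\{|\xi|<0.02 N_1\}$, the low--low interaction vanishes, leaving only contributions from $f_{\rm h}g_{\rm l}$, $f_{\rm l}g_{\rm h}$, and $f_{\rm h}g_{\rm h}$. On the frequency side the triangle inequality $|\xi|\le |\xi_1|+|\xi_2|$ for $\xi=\xi_1+\xi_2$ yields the pointwise bound
\begin{align*}
e^{\lambda s/\sqrt{T}\,|\xi|}\;\le\; e^{\lambda s/\sqrt{T}\,|\xi_1|}\, e^{\lambda s/\sqrt{T}\,|\xi_2|},
\end{align*}
which converts the convolution $\widehat{f}*\widehat{g}$ back to a pointwise physical-space product of $\T(f)$ and $\T(g)$, at which point Plancherel and Young reduce matters to products of the seminorms appearing on the right-hand side of \eqref{S2eq13}.

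The main analytic workhorse will be the identity
\begin{align*}
e^{-(t-s)|\xi|^2+\lambda(t-s)/\sqrt{T}\,|\xi|}
\;=\;e^{(t-s)\lambda^2/(4T)}\,e^{-(t-s)(|\xi|-\lambda/(2\sqrt{T}))^2}.
\end{align*}
Pairing the outgoing weight $e^{-\lambda^2 t/(4T)}$ with the two incoming $X_T$-weights at time $s$ (each contributing $e^{-\lambda^2 s/(4T)}$), and using $|\xi_1|+|\xi_2|\ge|\xi|$ to reduce the Gevrey exponents to $\lambda(t-s)/\sqrt T\,|\xi|$, the surviving $\lambda^2$-exponential collapses to $e^{\lambda^2 s/(4T)}\le e^{\lambda^2/4}$ for $s\le T$, accounting for the prefactor of the $\|f\|_{X_T}\|g\|_{X_T}$ term. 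I would then bound each piece as follows. For $f_{\rm h}g_{\rm h}$, bound both factors in $X_T$ and use Young's convolution inequality; the residual Gaussian $e^{-(t-s)(|\xi|-\lambda/(2\sqrt T))^2}$ absorbs $|\xi|^{\delta+1/2}$ and supplies the $\lambda^{-\delta}$ gain via the cutoff $|\xi|\ge 0.1 N_1$. For the mixed $L^\infty_T H^{\gamma}\times L^\infty_T H^{\gamma}$ pieces $f_{\rm h}g_{\rm l}$ and $f_{\rm l}g_{\rm h}$, the low-frequency factor is controlled by Lemma \ref{S2lem2}, whose pointwise decay $e^{-\lambda^2 t/(4T)}$ absorbs part of the outgoing Gevrey weight; the exponential on the high-frequency piece is then kept small by localizing its frequency to $|\xi_{\rm h}|\lesssim \sqrt{\eta_0}\lambda T^{-1/2}$, which produces at most the factor $e^{4\eta_0\lambda^2}$, together with a tail contribution $\lambda^{-\delta}$ coming from the region where the Gevrey weight is close to maximal. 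The mixed cross terms $X_T\times L^\infty_T H^{\gamma}$ are handled analogously, with Lemma \ref{S2lem2} supplying the low-frequency estimate on the $L^\infty_T H^{\gamma}$ factor and the $X_T$ weight $t^{\delta/2}$ integrating to the advertised power $T^{\delta/2}$.

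The main technical obstacle will be the simultaneous bookkeeping of constants: one must produce precisely the power $\lambda^{-(\gamma-1/2+\delta)}T^{(\gamma-1/2+2\delta)/2}$ for the $L^\infty_T H^{\gamma}$ terms and the power $\lambda^{-\delta}$ for the $X_T\times X_T$ term while retaining only the sharp exponential prefactors $e^{4\eta_0\lambda^2}$ and $e^{\lambda^2/4}$. This will require a multi-scale splitting of the frequency integral on whichever factor carries the Gevrey weight, and a Minkowski-type exchange of time and frequency integrations so that the $s$-integrals, which produce Beta-function kernels of the form $(t-s)^{-a}s^{-b}$, converge uniformly in $\xi$. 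The additional factor $\lambda^{2-\delta}T^{\delta/2}$ in the last term of \eqref{S2eq13} reflects the two-derivative cost paid in the Gevrey algebra when only one of the two factors is in $X_T$, and to obtain it cleanly one must carefully match the low-frequency bound from Lemma \ref{S2lem2} against the high-frequency weight carried by $X_T$ before performing the $s$-integration.
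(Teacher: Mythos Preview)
Your proposal contains the correct building blocks (exclusion of the low--low interaction, the triangle inequality $|\xi|\le|\xi_1|+|\xi_2|$ for the Gevrey exponent, and the quadratic completion in the heat kernel), but the organizing decomposition is wrong, and this creates a genuine gap. You try to match the three term types on the right of \eqref{S2eq13} to the three frequency interactions $f_{\rm h}g_{\rm h}$, $f_{\rm h}g_{\rm l}$, $f_{\rm l}g_{\rm h}$. The paper instead splits the \emph{time} integral as $\int_0^t=\int_0^{\eta_0 t}+\int_{\eta_0 t}^t$, and it is this split that separates the three terms: the $L^\infty_T H^\gamma\times L^\infty_T H^\gamma$ contribution (with the factor $e^{4\eta_0\lambda^2}+\lambda^{-\delta}$) comes entirely from the short-time piece $\int_0^{\eta_0 t}$ applied to \emph{all} of $f_{\rm h}g_{\rm l}$, $f_{\rm l}g_{\rm h}$, $f_{\rm h}g_{\rm h}$, while the $X_T\times X_T$ and $X_T\times L^\infty_T H^\gamma$ contributions come from the long-time piece $\int_{\eta_0 t}^t$ applied respectively to the high--high and high--low interactions. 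The mechanism behind $e^{4\eta_0\lambda^2}$ is that on $[0,\eta_0 t]$ one writes
\[
e^{-\frac{\lambda^2 t}{4T}+\lambda\frac{t}{\sqrt T}|\xi|-(t-s)|\xi|^2}
=e^{-t\left(|\xi|-\frac{\lambda}{2\sqrt T}\right)^2}e^{s|\xi|^2},
\]
and for $|\xi|\le 2N_1$, $s\le\eta_0 t\le\eta_0 T$, the last factor is at most $e^{4\eta_0\lambda^2}$; no Gevrey weight survives on the inputs, so plain $H^\gamma$ suffices.

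Your alternative route to $e^{4\eta_0\lambda^2}$---localizing the high factor to $|\xi_{\rm h}|\lesssim\sqrt{\eta_0}N_1$---cannot close. On the complementary ``tail'' $|\xi_{\rm h}|\gtrsim\sqrt{\eta_0}N_1$, the Gevrey weight $e^{\lambda s/\sqrt T\,|\xi_{\rm h}|}$ is of size up to $e^{c\lambda^2}$ for $s$ near $t$, and there is no way to absorb this into $\|f_{\rm h}\|_{L^\infty_T H^\gamma}$, which carries no Gevrey weight at all; a $\lambda^{-\delta}$ gain alone cannot compensate an $e^{c\lambda^2}$ loss. This is precisely why the paper, on the long-time piece, bounds the high factor in $X_T$ (which \emph{does} carry the weight) and uses Lemma~\ref{S2lem2} only for the low factor, producing the $X_T\times L^\infty_T H^\gamma$ cross term rather than an $H^\gamma\times H^\gamma$ one. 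Finally, note that the long-time piece also requires a separate technical ingredient (Lemma~\ref{S2lem1} in the paper) to extract the sharp $\lambda^{-\delta}e^{\lambda^2/4}$ from the $s$-integral $\int_{\eta_0 t}^t e^{\lambda^2 s/(4T)}(\cdots)\,ds$; your sketch of ``Beta-function kernels'' does not capture the exponential growth in $s$ that this lemma is designed to handle.
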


\begin{proof}
For any $\eta_0>0,$ which will be taken sufficiently small later,   we split the integral $\int_0^t = \int_0^{\eta_0 t} + \int_{\eta_0 t}^t$
 and shall estimate
each piece separately.

In view of \eqref{S1eq1},  we decompose the piece $\int_0^{\eta_0 t}$ into the following two parts:
\beq\label{S2eq4}
\begin{split}
t^{\frac {\delta} 2} &\bigl\|  |\xi|^{\frac 12+\d}  1_{|\xi| \ge 0.1 N_1 }
 e^{\lambda \frac t {\sqrt T } |\xi| -\frac {\lambda^2 t}{4 T} }\int_0^{\eta_0 t}   e^{-(t-s) |\xi|^2}
\widehat{Q(f,g)}(s,\xi)\, ds \bigr\|_{L^2_\xi} \\
=&t^{\frac {\delta} 2}\bigl\|  |\xi|^{\frac 12+\d}  1_{|\xi| \ge 0.1 N_1 }
\int_0^{\eta_0 t}  e^{-t\left(|\xi|-\f\la{2\sqrt{T}}\right)^2} e^{s|\xi|^2}
\widehat{Q(f,g)}(s,\xi)\, ds \bigr\|_{L^2_\xi} \\
 \lesssim&    \bigl\| t^{\frac {\delta} 2}
 1_{|\xi| \le 2 N_1}
 \int_0^{\eta_0 t} |\xi|^{\frac 32+\delta}  e^{s |\xi|^2}
 1_{|\xi|\ge 0.1 N_1 } |\widehat{f}\ast\widehat{g} (s, \xi)| \,ds \bigr\|_{L^2_\xi } \\
 &+ \bigl\| t^{\frac {\delta} 2}   1_{|\xi| \ge 2 N_1 }
 \int_0^{\eta_0 t} |\xi|^{\frac 32+\delta}  e^{- \frac{t}{10} |\xi|^2}
|\widehat{f}\ast\widehat{g} (s, \xi)| \, ds \bigr\|_{L^2_\xi},
 \end{split}\eeq
 provided that $\eta_0$ is small enough. {Here the smallness of $\eta_0$ is needed  for the second
 piece so that $e^{s|\xi|^2} e^{-t\cdot \frac {9}{16} |\xi|^2} \le e^{-\frac t{10} |\xi|^2}$.
 In particular $\eta_0<0.1$ suffices here.}

 By frequency localization and \eqref{S2eq1}, we have
 \beq\label{S2eq5}
 1_{|\xi| \ge 0.1 N_1 }\widehat{f}\ast\widehat{g} = 1_{|\xi|\ge 0.1 N_1 }
 \bigl(\widehat{f}_{\rm l} \ast \widehat{g}_{\rm h}  + \widehat{f}_{\rm h} \ast \widehat{g}_{\rm l}
 +
 \widehat{f}_{\rm h} \ast \widehat{g}_{\rm h}\bigr).
 \eeq
Then we get, by applying Young's inequality, $\|\wh{f}\|_{L^{p'}}\leq C\|f\|_{L^p}$ for $p\in [1,2]$ and $p'\eqdefa\f{p}{p-1},$ that
\begin{align*}
 &  \int_{0}^{\eta_0t}   \bigl\|  1_{|\xi| \le 2N_1}   |\xi|^{\frac 32 +\delta}e^{s|\xi|^2} |1_{|\xi|\ge 0.1 N_1} |\widehat{f}\ast\widehat{g}(s,\xi) | \bigr\|_{L_{\xi}^2} ds \\
 &\lesssim \int_{0}^{\eta_0 t} e^{4 s N_1^2} N_1^{\frac 32+\delta}\bigl\|1_{|\xi| \le 2N_1}\bigr\|_{L^{\f6{1-4\d}}_\xi}\bigl\| 1_{|\xi|\ge 0.1 N_1} \widehat{f}\ast\widehat{g}(s,\cdot)\bigr\|_{L^{\f3{1+2\d}}_\xi}\,ds\\
& \lesssim  \int_{0}^{\eta_0 t} e^{4 s N_1^2} N_1^{\frac 32+\delta}
  \cdot N_1^{\frac 12-2\delta}  \bigl(\| f_{\rm l} g_{\rm h}(s)\|_{L^{\frac 3{2(1-\delta)}}} +\| f_{\rm h} g_{\rm l}(s)\|_{L^{\frac 3{2(1-\delta)}}} +\|f_{\rm h} g_{\rm h}(s) \|_{L^{\frac 3{2(1-\delta) }}}\bigr) \,ds,
   \end{align*}
 from which, Sobolev imbedding inequality and \eqref{S2eq1}, we infer
  \begin{align*}
 &  \int_{0}^{\eta_0t}   \|  1_{|\xi| \le 2N_1}   |\xi|^{\frac 32 +\delta}e^{s|\xi|^2} |1_{|\xi|\ge 0.1 N_1} |\widehat{f}\ast\widehat{g}(s,\xi) | \|_{L_{\xi}^2} ds \\
 & \lesssim \int_0^{\eta_0 t} e^{4s N_1^2} N_1^{2-\delta}
  \bigl( \| |D|^{\frac 12+2\delta} f_{\rm l} \|_{L^2} \| |D|^{\frac 12} g_{\rm h} \|_{L^2}
 +  \| |D|^{\frac 12} f_{\rm h} \|_{L^2}\| |D|^{\frac 12+2\delta} g_{\rm l} \|_{L^2}\\
 &\qquad+
 \| |D|^{\frac 12+ \delta} f_{\rm h} \|_{L^2} \| |D|^{\frac 12+ \delta} g_{\rm h} \|_{L^2} \bigr)\, ds \\
& \lesssim \int_0^{\eta_0 t} e^{4s N_1^2} N_1^{2-\delta} \,ds \Bigl( N_1^{- \gamma+\frac 12}\bigl(\|f\|_{L_t^{\infty}(H^{\gamma})}\|g_{\rm h}\|_{L_t^{\infty}(H^{\gamma})}+\|f_{\rm h}\|_{L_t^{\infty}(H^{\gamma})}\|g\|_{L_t^{\infty}(H^{\gamma})}\bigr)\\
& + N_1^{-2(\gamma-\frac 12-\delta)} \|f_{\rm h}\|_{L_t^{\infty}(H^{\gamma})} \|g_{\rm h}\|_{L_t^{\infty}(H^{\gamma})}\Bigr),
\end{align*}
due to  $\ga>\f12+2\d$, we achieve
\begin{align*}
 &  \int_{0}^{\eta_0t}   \|  1_{|\xi| \le 2N_1}   |\xi|^{\frac 32 +\delta}e^{s|\xi|^2} |1_{|\xi|\ge 0.1 N_1} |\widehat{f}\ast\widehat{g}(s,\xi) | \|_{L_{\xi}^2} ds \\
 &\lesssim  N_1^{-\delta} e^{4\eta_0 T N_1^2}  N_1^{-\gamma+\frac 12} \bigl(\|f\|_{L_t^{\infty}(H^{\gamma})}\|g_{\rm h}\|_{L_t^{\infty}(H^{\gamma})}+\|f_{\rm h}\|_{L_t^{\infty}(H^{\gamma})}\|g\|_{L_t^{\infty}(H^{\gamma})}
\bigr)\\
& \lesssim   \bigl(\la^{-1}T^{\frac 12}\bigr)^{\gamma-\frac 12+\delta } e^{4\eta_0\la^2}\bigl(\|f\|_{L_t^{\infty}(H^{\gamma})}\|g_{\rm h}\|_{L_t^{\infty}(H^{\gamma})}+\|f_{\rm h}\|_{L_t^{\infty}(H^{\gamma})}\|g\|_{L_t^{\infty}(H^{\gamma})}
\bigr).
   \end{align*}

Along the same line,  we deduce that
 \begin{align*}
 & t^{\frac {\delta} 2}  \bigl\| 1_{|\xi| \ge 2 N_1 }
 \int_0^{\eta_0 t} |\xi|^{\frac 32+\delta}  e^{- \frac{t}{10} |\xi|^2}
\widehat{f}\ast\widehat{g} (s, \xi) \, ds \bigr\|_{L^2} \\
&\lesssim   t^{-1}\bigl\||\xi|^{-\f12} 1_{|\xi| \ge 2 N_1 }  \bigr\|_{L^{\f6{1-4\d}}}\int_0^{\eta_0 t}\| 1_{|\xi|\ge 0.1 N_1} \widehat{f}\ast\widehat{g}(s,\cdot)\|_{L^{\f3{1+2\d}}}\,ds\\
&\lesssim  t^{-1}N_1^{-2\d}  \int_0^{\eta_0 t}  \bigl(\| f_{\rm l} g_{\rm h}(s)\|_{L^{\frac 3{2(1-\delta)}}} +\| f_{\rm h} g_{\rm l}(s)\|_{L^{\frac 3{2(1-\delta)}}} +\|f_{\rm h} g_{\rm h}(s) \|_{L^{\frac 3{2(1-\delta) }}}\bigr)
 \,ds\\
 &\lesssim \bigl(\lambda^{-1} T^{\frac 12}\bigr)^{\gamma-\frac 12+2\d}\bigl(\|f\|_{L_t^{\infty}(H^{\gamma})}\|g_{\rm h}\|_{L_t^{\infty}(H^{\gamma})}+\|f_{\rm h}\|_{L_t^{\infty}(H^{\gamma})}\|g\|_{L_t^{\infty}(H^{\gamma})}
\bigr).
  \end{align*}

{Plugging} the above estimates into \eqref{S2eq4}, we {obtain}
  \beq \label{S2eq7}
  \begin{split}
 &\bigl\| t^{\frac {\delta} 2} |\xi|^{\frac 12+\d}  1_{|\xi| \ge 0.1 N_1 }
  e^{\lambda \frac t {\sqrt T } |\xi| -\frac {\lambda^2 t}{4 T} }
 \int_0^{\eta_0 t}   e^{-(t-s) |\xi|^2}
\widehat{Q(f,g)}(s,\xi)\, ds \bigr\|_{L^\infty_T(L^2_\xi)} \\
&\lesssim \bigl( e^{4\eta_0\la^2} +\la^{-\d} \bigr)\la^{-{(\gamma-\frac 12+\delta) }}T^{\frac 12\left(\gamma-\frac 12+2\delta\right) } \\
&\qquad\times\bigl(\|f\|_{L_T^{\infty}(H^{\gamma})}\|g_{\rm h}\|_{L_T^{\infty}(H^{\gamma})}+\|f_{\rm h}\|_{L_T^{\infty}(H^{\gamma})}\|g\|_{L_T^{\infty}(H^{\gamma})}
\bigr).
\end{split}
\eeq

To deal with the other piece $\int_{\eta_0 t}^t,$ we need the following lemma,  the proof of which will be postponed
after we finish the proof of Proposition \ref{S2prop1}.

\begin{lem} \label{S2lem1}
{\sl Let $N_0\eqdefa\frac {N_1}2 \gg 1$.
Then for all $0<t\le T$, one has
\beq\label{S2eq8}
\bigl\|t^{\frac {\delta}2} \int_{\eta_0 t}^t 1_{|\xi| \le 2N_0} e^{ N_0^2s}  |\xi|^{\frac 32 +\delta}
\widehat{F}(s, \xi)  \,ds \|_{L^\infty_T(L_{\xi}^2)}
\lesssim \lambda^{-\delta} e^{ N_0^2T}\cdot \sup_{0<s\le T}\bigl( s^{\delta }
 \|F(s)\|_{L^{\frac 3 {2(1-\delta)}}}\bigr),\eeq
 and 
 \beq\label{S2eq9}
\begin{split}
 \bigl\|t^{\frac {\delta} 2}
 \int_{\eta_0 t}^{t}1_{|\xi| \ge  2N_0} e^{ N_0^2 s} |\xi|^{\frac 32+\delta} & e^{-   \frac 1{10} (t-s) |\xi|^2}
 \widehat{F}(s,\xi) \,ds \|_{L^\infty_T(L_{\xi}^2)}\\
  &\qquad \qquad\lesssim
 \lambda^{-\delta}
 e^{ N_0^2T}\cdot \sup_{0<s\le T} \bigl(s^{\delta } \| F(s)\|_{L^{\frac 3 {2(1-\delta)}}}\bigr).
\end{split}\eeq}
\end{lem}

We now continue our estimate {of} the piece $\int_{\eta_0 t}^t.$
In view of \eqref{S2eq2}, we write
\beq\label{S2eq10}
 \begin{split}
 t^{\frac {\delta} 2} &\bigl\|  |\xi|^{\frac 12+\d}  1_{|\xi| \ge 0.1 N_1 }
 e^{\lambda \frac t {\sqrt T } |\xi| -\frac {\lambda^2 t}{4 T} }\int_{\eta_0 t}^t   e^{-(t-s) |\xi|^2}
\widehat{Q(f,g)}(s,\xi)\, ds \bigr\|_{L^2_\xi} \\
\lesssim &t^{\frac {\delta} 2} \bigl\| |\xi|^{\frac 12+\d}  1_{|\xi| \ge 0.1 N_1 }
 \int_{\eta_0 t}^{t} |\xi|  e^{-(t-s) |\xi|^2}
 e^{\lambda \frac t {\sqrt T } |\xi| -\frac {\lambda^2 t}{4 T} }
 |\widehat{f}\ast \widehat{g}(s,\xi)|\, ds \bigr\|_{L^2_\xi}  \\
 \lesssim&
   t^{\frac {\delta} 2} \bigl\||\xi|^{\f32+\d}
 \int_{\eta_0 t}^{t}   e^{- (t-s) |\xi|^2}
 e^{\lambda \frac {t-s} {\sqrt T} |\xi| +\frac {\lambda^2 s} {2 T}-\frac {\lambda^2 t}{4T} }
 1_{|\xi| \ge 0.1 N_1 }|\widehat{\T(f)}\ast \widehat{\T(g)}(s,\xi)|\, ds \bigr\|_{L^2_\xi}.
 \end{split}\eeq
 By frequency localization \eqref{S2eq5}, it amounts to handle the estimates related to the terms: $\widehat{T(f_{\rm l})}\ast \widehat{\T(g_{\rm h})},$
 $\widehat{\T(f_{\rm h})}\ast \widehat{\T(g_{\rm l})}$ and $\widehat{\T(f_{\rm h})}\ast \widehat{\T(g_{\rm h})}.$

 We first estimate the contribution due to $\widehat{\T(f_{\rm h})}\ast \widehat{\T(g_{\rm h})}.$ We further decompose
 the integrand into the high and low frequency parts so that
  \begin{align*}
  t^{\frac {\delta} 2}&\bigl\|  |\xi|^{\f32+\d}
 \int_{\eta_0 t}^{t}    e^{- (t-s) |\xi|^2}
 e^{\lambda \frac {t-s} {\sqrt T} |\xi| +\frac {\lambda^2 s} {2 T}-\frac {\lambda^2 t}{4T} }
 |\widehat{\T(f_{\rm h})}\ast \widehat{\T(g_{\rm h})}(s,\xi)|\, ds \bigr\|_{L^2_\xi} \\
 \lesssim & t^{\frac {\delta} 2} \bigl\|
 \int_{\eta_0 t}^{t} |\xi|^{\frac 32+\delta} e^{-   (t-s) \left(|\xi| - \frac {\lambda } {2\sqrt T } \right)^2  +\frac {\lambda^2 s}{4T}}
 |\widehat{\T(f_{\rm h})}\ast \widehat{\T(g_{\rm h})}(s,\xi)|\, ds \bigr\|_{L^2_\xi}
  \\
  \lesssim &t^{\frac {\delta} 2} \bigl\|
 \int_{\eta_0 t}^{t} |\xi|^{\frac 32+\delta} 1_{|\xi| \le N_1} e^{\frac {\lambda^2 s}{4T} } |\widehat{\T(f_{\rm h})}\ast \widehat{\T(g_{\rm h})}(s,\xi)|\, ds \bigr\|_{L^2_\xi}
   \\
  &+ t^{\frac {\delta} 2}\bigl\|
 \int_{\eta_0 t}^{t} |\xi|^{\frac 32+\delta} 1_{|\xi| \ge  N_1 }e^{-   \frac 1{10} (t-s) |\xi|^2
 +\frac {\lambda^2 s }{4T} }|\widehat{\T(f_{\rm h})}\ast \widehat{\T(g_{\rm h})}(s,\xi)|\, ds \bigr\|_{L^2_\xi},
 \end{align*}
 from which and Lemma \ref{S2lem1}, for $\la\geq 1$ and $T\leq 1,$ we infer
 \beq\label{S2eq11}
 \begin{split}
  t^{\frac {\delta} 2}&\bigl\|  |\xi|^{\f32+\d}
 \int_{\eta_0 t}^{t}    e^{- (t-s) |\xi|^2}
 e^{\lambda \frac {t-s} {\sqrt T} |\xi| +\frac {\lambda^2 s} {2 T}-\frac {\lambda^2 t}{4T} }
 |\widehat{\T(f_{\rm h})}\ast \widehat{\T(g_{\rm h})}(s,\xi)|\, ds \bigr\|_{L^2_\xi} \\
 \lesssim & \lambda^{-\delta} e^{\frac {\lambda^2 }4} \cdot \sup_{0<s \le T} \bigl(s^{\delta}
 \| \T(f_{\rm h}) \T(g_{\rm h})(s,\cdot) \|_{L^{\frac 3 {2(1-\delta)} } }\bigr)  \\
 \lesssim & \lambda^{-\delta} e^{\frac {\lambda^2 }4} \cdot  \sup_{0<s \le T}
 \bigl(s^{\delta} \| |D|^{\frac 12 +\delta} \T(f_{\rm h})(s,\cdot) \|_{L^2}  \| |D|^{\frac 12 +\delta} \T(g_{\rm h})(s,\cdot) \|_{L^2}\bigr) \\
 \lesssim &  \lambda^{-\delta} e^{\frac {\lambda^2 } 4} \|f \|_{X_T} \|g\|_{X_T}.
 \end{split}
 \eeq

Next we estimate the contribution due to $\widehat{\T(f_{\rm h})\T(g_{\rm l})}$. Indeed along the same line
{as} the estimate of
\eqref{S2eq11}, we write
\begin{align*}
  t^{\frac {\delta} 2}&\bigl\| |\xi|^{\f32+\d}
 \int_{\eta_0 t}^{t}    e^{- (t-s) |\xi|^2}
 e^{\lambda \frac {t-s} {\sqrt T} |\xi| +\frac {\lambda^2 s} {2 T}-\frac {\lambda^2 t}{4T} }
|\widehat{\T(f_{\rm h})}\ast \widehat{\T(g_{\rm l})}(s,\xi)|\, ds \bigr\|_{L^2_\xi}  \\
  \lesssim &t^{\frac {\delta} 2} \bigl\|
 \int_{\eta_0 t}^{t} |\xi|^{\frac 32+\delta} 1_{|\xi| \le N_1} e^{\frac {\lambda^2 s}{4T} }
 |\widehat{\T(f_{\rm h})}\ast \widehat{\T(g_{\rm l})}(s,\xi)|\, ds \bigr\|_{L^2_\xi} \\
  &+ t^{\frac {\delta} 2} \bigl\|
 \int_{\eta_0 t}^{t} |\xi|^{\frac 32+\delta} 1_{|\xi| \ge  N_1 }e^{-   \frac 1{10} (t-s) |\xi|^2
 +\frac {\lambda^2 s }{4T} } |\widehat{\T(f_{\rm h})}\ast \widehat{\T(g_{\rm l})}(s,\xi)|\, ds \bigr\|_{L^2_\xi}.
  \end{align*}
  Yet it follows from Young's inequality that
  \begin{align*}
  t^{\frac {\delta} 2}& \bigl\|
 \int_{\eta_0 t}^{t} |\xi|^{\frac 32+\delta} 1_{|\xi| \le N_1} e^{\frac {\lambda^2 s}{4T} }
 |\widehat{\T(f_{\rm h})}\ast \widehat{\T(g_{\rm l})}(s,\xi)|\, ds \bigr\|_{L^2_\xi} \\
 \lesssim& t^{\frac {\delta} 2}N_1^{\f32+\d}
 \int_{\eta_0 t}^{t}\bigl\| 1_{|\xi| \le N_1}\bigr\|_{L^{\f6{1-4\d}}}e^{\frac {\lambda^2 s}{4T} }
  \|\widehat{\T(f_{\rm h})}\ast \widehat{\T(g_{\rm l})}(s,\cdot)\|_{L^{\f3{1+2\d}}}\, ds\\
  \lesssim &N_1^{2-\d}T^{1-\f\d2}\sup_{0<s \le T}\Bigl( s^{\delta}e^{\frac {\lambda^2 s}{4T} }
  \|\T(f_{\rm h})\T(g_{\rm l})(s)\|_{L^{\frac 3 {2(1-\delta)} }}\Bigr),
  \end{align*}
  and 
  \begin{align*}
  t^{\frac {\delta} 2}& \bigl\|
 \int_{\eta_0 t}^{t} |\xi|^{\frac 32+\delta} 1_{|\xi| \ge  N_1 }e^{-   \frac 1{10} (t-s) |\xi|^2
 +\frac {\lambda^2 s }{4T} } |\widehat{\T(f_{\rm h})}\ast \widehat{\T(g_{\rm l})}(s,\xi)|\, ds \bigr\|_{L^2_\xi}\\
  \lesssim &t^{\frac {\delta} 2}\int_{\eta_0 t }^{t}(t-s)^{-1+\f\d2}s^{-\d}\,ds 
  \sup_{0<s \le T}\Bigl( s^{\delta}e^{\frac {\lambda^2 s}{4T} }
  \|\T(f_{\rm h})\T(g_{\rm l})(s)\|_{L^{\frac 3 {2(1-\delta)} }}\Bigr)\\
  \lesssim & \sup_{0<s \le T}\Bigl( s^{\delta}e^{\frac {\lambda^2 s}{4T} }
  \|\T(f_{\rm h})\T(g_{\rm l})(s)\|_{L^{\frac 3 {2(1-\delta)} }}\Bigr).
  \end{align*} 
 As a consequence, we deduce that
 \beq\label{S2eq11a}
\begin{split}
  t^{\frac {\delta} 2}&\bigl\| |\xi|^{\f32+\d}
 \int_{\eta_0 t}^{t}    e^{- (t-s) |\xi|^2}
 e^{\lambda \frac {t-s} {\sqrt T} |\xi| +\frac {\lambda^2 s} {2 T}-\frac {\lambda^2 t}{4T} }
|\widehat{\T(f_{\rm h})}\ast \widehat{\T(g_{\rm l})}(s,\xi)|\, ds \bigr\|_{L^2_\xi} \\
 \lesssim & (\lambda^{2-\delta} +1) \cdot \sup_{0<s \le T}\Bigl( s^{\delta}e^{\frac {\lambda^2 s}{4T} }
  \|\T(f_{\rm h})\T(g_{\rm l})(s)\|_{L^{\frac 3 {2(1-\delta)} }}\Bigr) \\
 \lesssim & \lambda^{2-\delta}  \cdot  \sup_{0<s \le T}
 \Bigl(s^{\delta}  e^{\frac {\lambda^2 s}{4T} } \| |D|^{\frac 12+\delta} \T(f_{\rm h})(s) \|_{L^2}
 \cdot \| |D|^{\frac 12 +\delta} \T(g_{\rm l})(s) \|_{L^2} \Bigr)\\
 \lesssim &   \lambda^{2-\delta} T^{\frac {\delta}2} \|f\|_{X_T} \sup_{0<s \le T}
  \bigl(e^{\frac {\lambda^2 s}{4T} }\|\T(g_{\rm l})(s)\|_{\dot H^{\f12+\d}}\bigr),
 \end{split}\eeq
 from which and \eqref{1.9aa}, we infer 
  $t\leq T,$
 \begin{align*}
 &\bigl\|t^{\frac {\delta} 2} |\xi|^{\f32+\d}
 \int_{\eta_0 t}^{t}    e^{- (t-s) |\xi|^2}
 e^{\lambda \frac {t-s} {\sqrt T} |\xi| +\frac {\lambda^2 s} {2 T}-\frac {\lambda^2 t}{4T} }
|\widehat{\T(f_{\rm h})}\ast \widehat{\T(g_{\rm l})}(s,\xi)|\, ds \bigr\|_{L^\infty_T(L^2_\xi)} \\
&\lesssim \lambda^{2-\delta} T^{\frac {\delta}2}\bigl(1+\la^{\f12+\d+\ga}T^{\f12\left(\ga-\f12-\d\right)}e^{0.01\la^2}\bigr)\|f\|_{X_T} \|g\|_{L^\infty_T(H^{\ga})}.
 \end{align*}
By substituting the above estimate and \eqref{S2eq11} into \eqref{S2eq10}, we obtain
\beq \label{S2eq12}
\begin{split}
 \bigl\| t^{\frac {\delta} 2}& |\xi|^{\frac 12+\d}  1_{|\xi| \ge 0.1 N_1 }
 e^{\lambda \frac t {\sqrt T } |\xi| -\frac {\lambda^2 t}{4 T} }\int_{\eta_0 t}^t   e^{-(t-s) |\xi|^2}
\widehat{Q(f,g)}(s,\xi)\, ds \bigr\|_{L^\infty_T(L^2_\xi)} \\
\lesssim &  \lambda^{-\delta} e^{\frac {\lambda^2 } 4}  \|f \|_{X_T}\|g\|_{X_T}+ \lambda^{2-\d}T^{\frac {\delta}2} \bigl(1+\la^{\f12+\d+\ga}T^{\f12\left(\ga-\f12-\d\right)}e^{0.01\la^2}\bigr)
\\
&\qquad\qquad\qquad\qquad\qquad\times
 \bigl(\|g\|_{L^\infty_T(H^{\ga})}  \|f \|_{X_T}+\|f\|_{L^\infty_T(H^{\ga})}  \|g\|_{X_T}\bigr).
 \end{split}\eeq

 By summarizing the estimates \eqref{S2eq7} and \eqref{S2eq12}, we obtain \eqref{S2eq13}.  This completes the proof of
 Proposition \ref{S2prop1}.
\end{proof}

Let us now present the proof of Lemma \ref{S2lem1}.

\begin{proof}[Proof of Lemma \ref{S2lem1}]
We first get, by applying Young's inequality, that
\begin{align*}
 &  \int_{\eta_0 t}^t e^{s N_0^2} \|  1_{|\xi| \le 2N_0}  |\xi|^{\frac 32 +\delta} \widehat{F}(s, \xi)  \|_{L_{\xi}^2} \,ds
  \\
 & \lesssim  \int_{\eta_0 t}^t e^{s N_0^2} N_0^{\frac 32+\delta}
  \bigl\| 1_{|\xi| \le 2N_0}\bigr\|_{L^{\f6{1-4\d}}} \|\widehat{F}(s, \xi) \|_{L^{\f3{1+2\d}}}\, ds  \\
 & \lesssim  \int_{\eta_0 t}^t e^{s N_0^2} N_0^{\frac 32+\delta}
  \cdot N_0^{\frac 12-2\delta}  \|F(s)\|_{L^{\frac 3 {2(1-\delta)}}}\, ds \\
 & \lesssim   \sup_{0<s\le T}\bigl( s^{\delta } \| F(s) \|_{L^{\frac 3 {2(1-\delta)}}}\bigr)
 \cdot t^{-\frac {\delta}2}
  (N_0 \sqrt t)^{-\delta} \cdot  \bigl( e^{ N_0^2 t} -1\bigr).
 \end{align*}
 {Note} that the function $f(x) = x^{-\delta} (e^{x^2} -1)$ is monotonically increasing in $x>0$. In particular,
 for $t\leq T,$ one has ({recall $2N_0=N_1=\lambda T^{-\frac 12}$})
 \begin{align*}
 f(N_0 \sqrt t) \le f(N_0 \sqrt T) \le  2^{\delta} \lambda^{-\delta} e^{N_0^2 T},
 \end{align*}
which leads to \eqref{S2eq8}.

On the other hand, we have
 \begin{align*}
 & t^{\frac {\delta} 2}\bigl\|
 \int_{\eta_0 t}^{t} e^{ N_0^2 s} |\xi|^{\frac 32+\delta} 1_{|\xi| \ge  2N_0}e^{-   \frac 1{10} (t-s) |\xi|^2}
 \widehat{F}(s,\xi) \,ds \|_{L^2_\xi}\\
&\lesssim  t^{\frac {\delta} 2}  \int_{\eta_0 t}^{ t} e^{N_0^2 s}
 (t-s)^{-1+\frac {\delta}2} 
 \| F(s,\cdot)\|_{L^{\frac 3{2(1-\delta) }}} \,ds \\
 &
 \lesssim  t^{\frac {\delta} 2} \int_{\eta_0 t}^t e^{N_0^2 s}  (t-s)^{-1+\frac {\delta}2} s^{-\delta}\, ds
 \sup_{0<s\leq T} \bigl(s^{\delta} \|F(s,\cdot)\|_{L^{\frac 3{2(1-\delta) }}}\bigr)\\
 &\lesssim  \int_{\eta_0}^1 e^{N_0^2 t \tau} (1-\tau)^{-1+\frac {\delta} 2} d\tau  \cdot
 \sup_{0<s\le T} \bigl(s^{\delta} \|F(s,\cdot)\|_{L^{\frac 3{2(1-\delta) }}}\bigr)
 \\
  &\lesssim  \langle N_0^2 T\rangle^{-\frac {\delta}2}  e^{N_0^2 T}\sup_{0<s\leq T} \bigl(s^{\delta} \|F(s,\cdot)\|_{L^{\frac 3{2(1-\delta) }}}\bigr),
 \end{align*}
 which ensures \eqref{S2eq9}. In the last step above, we used the elementary inequality
 (for $\Lambda\gg 1$):
 \begin{align}
 \int_{\eta_0}^1 e^{\Lambda \tau} (1-\tau)^{-1+\frac {\delta}2} d
 \tau = e^{\Lambda} \int_0^{1-\eta_0} e^{-\Lambda s} s^{-1+\frac {\delta}2} ds \lesssim
 \Lambda^{-\frac {\delta}2} e^{\Lambda}. \notag
 \end{align}
  This completes the proof of Lemma \ref{S2lem1}. 
\end{proof}

We shall first prove the local well-posedness of $(GNS)$ in the classical Sobolev spaces. In order to do so,
for $Q(u,v)$  being the bilinear form given by \eqref{S1eq1}, we define $\cB=\cB(u,v)$ via
\begin{equation}\label{S2eq14}
 \left\{\begin{array}{l}
\displaystyle \pa_t \cB 
-\D \cB 
=Q(u,v), \qquad (t,x)\in\R^+\times\R^3, \\
\displaystyle  \cB|_{t=0}=0.
\end{array}\right.
\end{equation}
{In yet other words, $\cB$ is related to $Q(u,v)$ by the integral:
\begin{align} \notag
\cB= \int_0^t e^{(t-s)\Delta} Q(u(s),v(s)) ds.
\end{align}
}

\begin{prop}\label{S2prop2}
{\sl Let $\ga\in [1/2,\infty[,$
$p_\ga\eqdefa\left\{\begin{array}{l}
4, \quad \  \mbox{if}\ \ga>1,\\
\f8{3-2\ga}\  \mbox{if}\ \ga\in [1/2,1],
\end{array}\right.$ and
$u,v$ belong to $L^{p_\ga}([0,T];$ $ H^{\ga+\f2{p_\ga}}).$ Then \eqref{S2eq14} has a unique solution
$\cB(u,v)\in C([0,T]; H^\ga)\cap L^2(]0,T[; \dot H^{\ga+1}).$ Moreover, there holds
\beq \label{S2eq15}
\begin{split}
\|& \cB(u,v)\|_{L^\infty_T(H^\ga)}+\|\na \cB(u,v)\|_{L^2_T(H^\ga)} \\
&\leq C_\ga(T)\|u\|_{L^{p_\ga}_T(H^{\ga+\f2{p_\ga}})}
\|v\|_{L^{p_\ga}_T(H^{\ga+\f2{p_\ga}})} \with C_\ga(T)\eqdefa\left\{\begin{array}{l}
CT^{\f14}, \quad \  \mbox{if}\ \ga>1,\\
T^{\f12-\f2{p_\ga}}\  \mbox{if}\ \ga\in [1/2,1],
\end{array}\right.
\end{split}
\eeq
and
\beq \label{S2eq19}
\|\cB(u,v)\|_{L^{p_\ga}_T(H^{\ga+\f2{p_\ga}})}\leq C_\ga(T)\bigl(1+T^{\f1{p_\ga}}\bigr)\|u\|_{L^{p_\ga}_T(H^{\ga+\f2{p_\ga}})}
\|v\|_{L^{p_\ga}_T(H^{\ga+\f2{p_\ga}})}.
\eeq
}
\end{prop}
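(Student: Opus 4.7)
The plan is to use the standard parabolic fixed-point machinery. Writing $\cB(u,v)$ via Duhamel as
$$\cB(u,v)(t) = \int_0^t e^{(t-s)\D} Q(u,v)(s)\, ds,$$
and exploiting the divergence structure $Q(u,v) = \mathcal R \pa (uv)$ (where $\mathcal R$ denotes a generic Riesz transform, bounded on every $L^2$-based Sobolev space), the argument reduces to a parabolic energy estimate combined with a product estimate and an interpolation inequality.

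First, I would perform the $H^\ga$ energy estimate by taking the $H^\ga$ inner product of \eqref{S2eq14} with $\cB$ and integrating by parts so as to transfer one spatial derivative from $Q$ onto $\cB$:
$$\f12 \f{d}{dt}\|\cB\|_{H^\ga}^2 + \|\na \cB\|_{H^\ga}^2 \lesssim \|uv\|_{H^\ga}\|\na \cB\|_{H^\ga}.$$
Young's inequality to absorb $\|\na\cB\|_{H^\ga}$ on the left, followed by time integration, yields
$$\|\cB\|_{L^\infty_T(H^\ga)}^2 + \|\na\cB\|_{L^2_T(H^\ga)}^2 \lesssim \|uv\|_{L^2_T(H^\ga)}^2.$$
The choice of $p_\ga$ in the statement is dictated by the sharp product--Sobolev embedding $H^{\ga+2/p_\ga}(\R^3)\cdot H^{\ga+2/p_\ga}(\R^3)\hookrightarrow H^\ga(\R^3)$. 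For $\ga\in[1/2,1]$ this follows from $H^{\ga+2/p_\ga}\hookrightarrow L^{6/(3-2(\ga+2/p_\ga))}$ together with a Kato--Ponce fractional Leibniz, and a time H\"older estimate of exponent $p_\ga/2$ then generates the prefactor $T^{1/2-2/p_\ga}$; for $\ga>1$ one uses instead the fact that $H^{\ga+1/2}$ is an algebra (since $\ga+1/2>3/2$) and extracts the factor $T^{1/4}$ by a H\"older interpolation in time.

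To prove \eqref{S2eq19} I would appeal to the Gagliardo--Nirenberg-type interpolation
$$\|\cB\|_{L^{p_\ga}_T(H^{\ga+2/p_\ga})} \lesssim \|\cB\|_{L^\infty_T(H^\ga)}^{1-2/p_\ga} \|\cB\|_{L^2_T(H^{\ga+1})}^{2/p_\ga},$$
whose exponents match the identities $\ga+2/p_\ga=(1-2/p_\ga)\ga+(2/p_\ga)(\ga+1)$ and $1/p_\ga=(2/p_\ga)\cdot(1/2)$. The extra factor $(1+T^{1/p_\ga})$ in \eqref{S2eq19} merely absorbs the low-frequency discrepancy between the inhomogeneous $H^{\ga+1}$ and homogeneous $\dot H^{\ga+1}$ norms coming out of the energy identity. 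Uniqueness of $\cB(u,v)$ is immediate from the linearity of \eqref{S2eq14} applied to the difference of two solutions, while continuity in time follows from standard density arguments for the heat semigroup.

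The main technical obstacle is the endpoint $\ga=1/2$, where $\ga+2/p_\ga=1$ and the required product estimate $H^1\cdot H^1\hookrightarrow H^{1/2}$ on $\R^3$ saturates the Sobolev embedding $H^1\hookrightarrow L^6$. Here the Riesz/divergence structure of $Q$ is indispensable: integration by parts moves exactly the one derivative needed to align the source term with the parabolic smoothing $L^2_T\dot H^{\ga+1}$. Away from this endpoint, the whole argument amounts to a careful bookkeeping of Sobolev and time-Lebesgue exponents.
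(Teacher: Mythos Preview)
Your overall strategy---energy estimate on \eqref{S2eq14}, product law in Sobolev spaces, and the interpolation
$\|\cB\|_{L^{p_\ga}_T(H^{\ga+2/p_\ga})}\lesssim \|\cB\|_{L^\infty_T(H^\ga)}^{1-2/p_\ga}\|\cB\|_{L^2_T(H^{\ga+1})}^{2/p_\ga}$---is exactly the paper's. For $\ga\in[1/2,1]$ your argument matches the paper line by line (cf.\ \eqref{S2eq18}), and your treatment of \eqref{S2eq19} and of the factor $(1+T^{1/p_\ga})$ is also the same.

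There is, however, a genuine gap in the case $\ga>1$. Once you integrate by parts fully and write the right-hand side as $\|u\otimes v\|_{H^\ga}\|\na\cB\|_{H^\ga}$, Young's inequality and time integration give only
\[
\|\cB\|_{L^\infty_T(H^\ga)}^2+\|\na\cB\|_{L^2_T(H^\ga)}^2\lesssim \int_0^T\|u\|_{H^{\ga+\frac12}}^2\|v\|_{H^{\ga+\frac12}}^2\,dt
\le \|u\|_{L^4_T(H^{\ga+\frac12})}^2\|v\|_{L^4_T(H^{\ga+\frac12})}^2,
\]
with no residual power of $T$: the H\"older exponents $(4,4)$ are saturated, so the claimed factor $T^{1/4}$ in $C_\ga(T)$ does not appear. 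The algebra property of $H^{\ga+1/2}$ alone cannot produce it, because the ``extra'' half-derivative is spent in space, not in time. The paper's fix (see \eqref{S2eq17}) is to transfer only \emph{half} a derivative onto $\cB$: one estimates
\[
\bigl|(\w{D}^\ga Q(u,v),\w{D}^\ga\cB)\bigr|
\lesssim \||D|^{-\frac12}Q(u,v)\|_{H^\ga}\,\||D|^{\frac12}\cB\|_{H^\ga}
\lesssim \|u\|_{H^{\ga+\frac12}}\|v\|_{H^{\ga+\frac12}}\,\|\cB\|_{H^\ga}^{\frac12}\|\na\cB\|_{H^\ga}^{\frac12}.
\]
Now the time H\"older reads $L^4_T\times L^4_T\times L^\infty_T\times L^4_T$ and leaves precisely the factor $T^{1/4}$. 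You should replace your full integration by parts in the $\ga>1$ case by this half-derivative split; the rest of your argument then goes through unchanged.
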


\begin{proof} For simplicity, we just present the {\it a priori} estimates
{ \eqref{S2eq15} and \eqref{S2eq19}}.
We first get, by taking $H^\ga$ inner product of \eqref{S2eq14} with $\cB(u,v),$ that
\beq \label{S2eq16}
\f12\f{d}{dt}\|\cB(u,v)(t)\|_{H^\ga}^2+\|\na\cB(u,v)\|_{H^\ga}^2=\bigl(\w{D}^\ga Q(u,v), \w{D}^\ga\cB(u,v)\bigr)_{L^2}.
\eeq
In the case when $\ga>1,$ we deduce from \eqref{S1eq1} and the law of product in Sobolev space (see Theorem 8.3.1 of \cite{H97} for instance), that
\begin{align*}
\bigl|\bigl(\w{D}^\ga Q(u,v), \w{D}^\ga\cB(u,v)\bigr)_{L^2}\bigr|\lesssim &
\||D|^{-\f12}Q(u,v)\|_{H^{\ga}}\||D|^{\f12}\cB(u,v)\|_{H^{\ga}}\\
\lesssim &\|u\|_{H^{\ga+\f12}}\|v\|_{H^{\ga+\f12}}\|\cB(u,v)\|_{H^{\ga}}^{\f12}\|\na\cB(u,v)\|_{H^{\ga}}^{\f12},
\end{align*}
so that
\beq \label{S2eq17}
\begin{split}
\int_0^T&\bigl|\bigl(\w{D}^\ga Q(u,v), \w{D}^\ga\cB(u,v)\bigr)_{L^2}\bigr|\,dt\\
\lesssim & T^{\f14}\|u\|_{L^4_T(H^{\ga+\f12})}\|v\|_{L^4_T(H^{\ga+\f12})}\|\cB(u,v)\|_{L^\infty_T(H^{\ga})}^{\f12}\|\na\cB(u,v)\|_{L^2_T(H^{\ga})}^{\f12}\\
\leq & CT^{\f12}\|u\|_{L^4_T(H^{\ga+\f12})}^2\|v\|_{L^4_T(H^{\ga+\f12})}^2+\f14\bigl(\|\cB(u,v)\|_{L^\infty_T(H^{\ga})}^{2}
+\|\na\cB(u,v)\|_{L^2_T(H^{\ga})}^{2}\bigr).
\end{split}
\eeq

Similarly, in the case when $\ga\in [1/2,1],$ we have
\begin{align*}
\bigl|\bigl(\w{D}^\ga Q(u,v), \w{D}^\ga\cB(u,v)\bigr)_{L^2}\bigr|\lesssim &
\|u\otimes v\|_{H^{\ga}}\|\na \cB(u,v)\|_{H^{\ga}}\\
\lesssim &\|u\|_{H^{\ga+\f2{p_\ga}}}\|v\|_{H^{\ga+\f2{p_\ga}}}\|\na\cB(u,v)\|_{H^{\ga}},
\end{align*}
so that
\beq \label{S2eq18}
\begin{split}
\int_0^T&\bigl|\bigl(\w{D}^\ga Q(u,v), \w{D}^\ga\cB(u,v)\bigr)_{L^2}\bigr|\,dt\\
\lesssim &T^{\f12-\f2{p_\ga}}\|u\|_{L^{p_\ga}_T(H^{\ga+\f2{p_\ga}})}\|v\|_{L^{p_\ga}_T(H^{\ga+\f2{p_\ga}})}\|\na\cB(u,v)\|_{L^2_T(H^{\ga})}\\
\leq & CT^{1-\f4{p_\ga}}\|u\|_{L^{p_\ga}_T(H^{\ga+\f2{p_\ga}})}^2\|v\|_{L^{p_\ga}_T(H^{\ga+\f2{p_\ga}})}^2+\f12\|\na\cB(u,v)\|_{L^2_T(H^{\ga})}^2,
\end{split}\eeq
where we used the fact: $\ga\geq \f12,$ so that $p_\ga\geq 4.$

By integrating \eqref{S2eq16} over $[0,T]$ and then substituting the estimate \eqref{S2eq17} or \eqref{S2eq18} into the resulting inequality,
we obtain \eqref{S2eq15}.

On the other hand, we deduce from the interpolation inequality in {Sobolev} spaces that
\begin{align*}
\|\cB(u,v)\|_{L^{p_\ga}_T(H^{\ga+\f2{p_\ga}})}\leq
&\|\cB(u,v)\|_{L^{\infty}_T(H^{\ga})}^{1-\f2{p_\ga}}\|\w{D}\cB(u,v)\|_{L^{2}_T(H^{\ga})}^{\f2{p_\ga}}\\
\lesssim &\|\cB(u,v)\|_{L^{\infty}_T(H^{\ga})}^{1-\f2{p_\ga}}\bigl(T^{\f12}\|\cB(u,v)\|_{L^{\infty}_T(H^{\ga})}
+\|\na\cB(u,v)\|_{L^{2}_T(H^{\ga})}\bigr)^{\f2{p_\ga}},
\end{align*}
from which and \eqref{S2eq15}, we deduce  \eqref{S2eq19}. This concludes the proof of Proposition \ref{S2prop2}.
\end{proof}

We are now in a position to complete the proof of Theorem \ref{thm1}.

\begin{proof}[Proof of Theorem \ref{thm1}] We divide the proof of Theorem \ref{thm1} into the following three steps:\\

\noindent{\bf Step 1.} The local existence of classical solution.\\

 In view of $(GNS)$ and \eqref{S2eq14}, we can equivalently reformulate $(GNS)$ as \eqref{S2eq20}.
We are going to use the following iteration scheme to construct the approximate solutions of \eqref{S2eq20}.
\beq \label{S2eq21}
u_1\eqdefa e^{t\D}u_{0}\andf u_{n+1}\eqdefa  e^{t\D}u_{0}+\cB(u_n,u_n).
\eeq

Let $p_\ga$ be determined by Proposition \ref{S2prop2}.
Due to $p_\ga>2,$ we get, by using Minkowski inequality, that
\beq\label{S2eq21a}
\begin{split}
\|e^{t\D}u_{0}\|_{L^{p_\ga}_T(H^{\ga+\f2{p_\ga}})}\lesssim &
\bigl\|\w{\xi}^\ga e^{-t|\xi|^2}\widehat{u_0}(\xi)\bigr\|_{L^{p_\ga}_T(L^2_\xi)}+\bigl\|\w{\xi}^\ga |\xi|^{\f2{p_\ga}} e^{-t|\xi|^2}\widehat{u_0}(\xi)\bigr\|_{L^{p_\ga}_T(L^2_\xi)}\\
\lesssim & T^{\f1{p_\ga}}\|u_0\|_{H^\ga}+\bigl\|\w{\xi}^\ga |\xi|^{\f2{p_\ga}} \|e^{-t|\xi|^2}\|_{L^{p_\ga}_T}\widehat{u_0}(\xi)\bigr\|_{L^2_\xi}\\
\leq & C\bigl(1+T^{\f1{p_\ga}}\bigr)\|u_0\|_{H^\ga}.
\end{split}\eeq
Then it follows from \eqref{S2eq19} and the proof of Lemma 5.5 of \cite{bcd} that in order to prove the convergence of the
approximate solution sequence, $\{u_n\},$ constructed by \eqref{S2eq21}, in ${L^{p_\ga}_T(H^{\ga+\f2{p_\ga}})},$
 we need to take the time $T$ to be sufficiently small. Indeed notice from \eqref{S2eq15} that for $\ga\in ]1/2,1],$ $p_\ga>4,$  for $ C_\ga(T)$
 being determined by \eqref{S2eq15}, we can thus
 define
\beq \label{S2eq22}
T_1\eqdefa \sup\bigl\{\ T\leq 1, \quad 4CC_\ga(T)\bigl(1+T^{\f1{p_\ga}}\bigr)^2\|u_0\|_{H^\ga}<1 \ \bigr\}.
\eeq
With {this definition of} $T_1,$ there exists $u\in {L^{p_\ga}_{T_1}(H^{\ga+\f2{p_\ga}})}$ so that $u$ satisfies  \eqref{S2eq20} and
\beq \label{S2eq23}
\lim_{n\to \infty}\|u_n-u\|_{L^{p_\ga}_{T_1}(H^{\ga+\f2{p_\ga}})}=0 \andf \sup_{n\geq 1}\|u_n\|_{L^{p_\ga}_{T_1}(H^{\ga+\f2{p_\ga}})}
\leq 2C\bigl(1+T_1^{\f1{p_\ga}}\bigr)\|u_0\|_{H^\ga},
\eeq
from which, \eqref{S2eq15} and \eqref{S2eq21}, we deduce that
\beq \label{S2eq24}
 \sup_{n\geq 1}\bigl(\|u_n\|_{L^\infty_{T_1}(H^\ga)}+\|\na u_n\|_{L^2_{T_1}(H^\ga)}\bigr)\leq C\|u_0\|_{H^\ga}\bigl(1+\|u_0\|_{H^\ga}\bigr).
 \eeq

 Notice from \eqref{S2eq21} that
 \begin{align*}
 u_{n+1}-u=&\cB(u_n,u_n)-\cB(u,u)\\
 =&B(u_n-u,u_n)+\cB(u,u_n-u),
 \end{align*}
 from which, \eqref{S2eq15} and \eqref{S2eq23}, we deduce that for $T\leq T_1,$
 \beq\label{S2eq32}
 \begin{split}
&\|(u_{n+1}-u)\|_{L^\infty_T(H^\ga)}+\|\na (u_{n+1}-u)\|_{L^2_T(H^\ga)}\\
&\leq C \|u_n-u\|_{L^{p_\ga}_T(H^{\ga+\f2{p_\ga}})}
\Bigl(\|u_n\|_{L^{p_\ga}_T(H^{\ga+\f2{p_\ga}})}+\|u\|_{L^{p_\ga}_T(H^{\ga+\f2{p_\ga}})}\Bigr)\\
&\leq C\|u_0\|_{H^\ga}\|u_n-u\|_{L^{p_\ga}_T(H^{\ga+\f2{p_\ga}})} \to 0\quad \mbox{as}\ \ n\to \infty.
\end{split}
\eeq

\noindent{\bf Step 2.} The uniform estimate of $\|u_n\|_{X_T}.$\\

On the other hand, by virtue of \eqref{S2eq1} and \eqref{S2eq21}, we write
\beq\label{S2eq25}
\begin{split}
\wh{u}_{n+1,\h}(t,\xi)\eqdefa &\wh{u}_{n+1}(t,\xi) 1_{|\xi|\geq 0.01N_1}\\
=&\wh{u}_{n+1}(t,\xi) 1_{0.01N_1\leq |\xi|<0.1N_1}+\wh{e^{t\D}u_0} 1_{|\xi|\geq 0.1N_1}+\wh{\cB(u_n,u_n)}(t,\xi) 1_{|\xi|\geq 0.1N_1},
\end{split}
\eeq
from which, \eqref{S1eq3} and \eqref{S2eq3}, we infer
\begin{align*}
\|u_{n+1}\|_{X_T}\leq & CT^{\frac 12\left(\gamma-\frac 12\right)} \lambda^{-\left(\gamma-\frac 12-\delta\right)}
\| u_{n+1,{\rm h}} \|_{L_T^{\infty}(H^{\gamma})}+\|e^{t\D}u_0\|_{X_T}\\
&+\bigl\| t^{\frac {\delta } 2}1_{|\xi| \ge 0.1 N_1 }
  | \xi |^{\delta+\frac 12} e^{\lambda \frac t {\sqrt T } |\xi| -\frac {\lambda^2 t}{4 T} } \int_0^te^{-(t-s)|\xi|^2}\widehat{Q(u_n,u_n)}(s,\xi)\,ds \bigr\|_{L^\infty_T(L^2_\xi)}.
  \end{align*}
{Note that}
\begin{align*}
\|e^{t\D}u_0\|_{X_T}= &\| t^{\frac {\delta} 2} |\xi|^{\frac 12+\delta} 1_{|\xi| \ge 0.01  N_1 }
e^{-\frac {\lambda^2 t}{4T} +\lambda \frac t {\sqrt T } |\xi| } e^{-t|\xi|^2} \widehat {u_0} (\xi) \|_{L^2_T(L^2_\xi)}  \\
\lesssim &  T^{\frac 12\left(\gamma-\frac 12\right)} \lambda^{-\left(\gamma-\frac 12-\delta\right)}
\| 1_{|\xi |\ge 0.01 N_1 }|\xi|^{\gamma} \widehat {u_0} (\xi) \|_{L^2_T(L^2_\xi)}  \\
\lesssim & T^{\frac 12\left(\gamma-\frac 12\right)} \lambda^{-(\gamma-\frac 12-\delta)}
\| u_{\rm h} \|_{L_T^{\infty}(H^{\gamma})},
\end{align*}
{where in the last step we invoke the limit solution $u$ determined by \eqref{S2eq23} (note that by
\eqref{S2eq24} $u\in L_T^{\infty} H^{\gamma}$).
Using Proposition \ref{S2prop1} for the nonlinear piece, we obtain}
\begin{align*}
\|u_{n+1}\|_{X_T} \leq &   C\Bigl(T^{\frac 12(\gamma-\frac 12)} \lambda^{-(\gamma-\frac 12-\delta)}\bigl[
\bigl( \| u_{n+1,{\rm h}} \|_{L_T^{\infty}(H^{\gamma})}+   \| u_{\rm h} \|_{L_T^{\infty}(H^{\gamma})}\bigr)
\\
&+ \bigl(1+e^{4\eta_0\la^2}\bigr)T^{\d}
\|u_{n}\|_{L_T^{\infty}(H^{\gamma}) }
\|u_{n,{\rm h}}\|_{L_T^{\infty}(H^{\gamma}) } \bigr]+\lambda^{-\delta} e^{\frac {\lambda^2}{4}} \|u_n\|_{X_T}^2 \\
&
+ \lambda^{2-\delta} T^{\frac {\delta} 2}\bigl(1+\la^{\f12+\d+\ga}T^{\f12\left(\ga-\f12-\d\right)}e^{0.01\la^2}\bigr)  \|u_{n}\|_{L_T^{\infty}(H^{\gamma}) } \|u_n\|_{X_T}\Bigr).
\end{align*}
Then we deduce from \eqref{S2eq32} that for arbitrary but fixed small enough constant $\ve>0,$ there exists an integer ${\rm N}$ so that for any $n\geq{\rm N},$ there holds
\beq \label{S2eq25a}
\begin{split}
\|u_{n+1}\|_{X_T} \leq &   C\Bigl(T^{\frac 12(\gamma-\frac 12)} \lambda^{-(\gamma-\frac 12-\delta)}\bigl[
1+ \bigl(1+e^{4\eta_0\la^2}\bigr)T^{\d}
\|u_{0}\|_{H^{\gamma} }\bigr]\\
&\times\bigl(\ve+
\|u_{{\rm h}}\|_{L_T^{\infty}(H^{\gamma}) } \bigr)+\lambda^{-\delta} e^{\frac {\lambda^2}{4}} \|u_n\|_{X_T}^2 \\
&
+ \lambda^{2-\delta} T^{\frac {\delta} 2}\bigl(1+\la^{\f12+\d+\ga}T^{\f12\left(\ga-\f12-\d\right)}e^{0.01\la^2}\bigr)  \|u_{0}\|_{H^{\gamma}} \|u_n\|_{X_T}\Bigr).
\end{split}\eeq

Before proceeding, we present the following lemma:

\begin{lem}[{Uniform high frequency smallness in $H^{\gamma}$}]\label{S2lem3}
{\sl Let $\gamma>\frac 12$ and $u \in C([0,
{T}]; H^{\gamma})$ be a mild solution of $(GNS)$ with
initial data $u_0 \in H^{\gamma}.$ We denote the Fourier multiplier $\widehat{P_{> J}}(\xi) \eqdefa 1_{|\xi| \ge 0.01  J}.$ Then one has
\beq \label{1.34ca}
\eta_J^\ga(T) \eqdefa \max_{0\le t \le {T} } \| P_{> J} u (t)\|_{H^{\gamma}} \to 0, \qquad
\text{as}\ \  J\to \infty.
\eeq}
\end{lem}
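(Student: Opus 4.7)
My plan is to reduce the uniform high-frequency smallness to a standard Dini-type compactness argument. For each integer $J \geq 1$ define
\[
f_J(t) \eqdefa \|P_{>J} u(t)\|_{H^\gamma}, \qquad t \in [0,T].
\]
The goal is to show that $f_J \to 0$ uniformly on $[0,T]$ as $J\to\infty$. Since $[0,T]$ is compact, Dini's theorem will deliver uniform convergence as soon as the three following properties are verified: (i) each $f_J$ is continuous on $[0,T]$; (ii) the sequence $(f_J(t))_{J\geq 1}$ is monotonically non-increasing in $J$ for every fixed $t$; and (iii) $f_J(t) \to 0$ pointwise as $J\to\infty$.

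Property (i) is immediate because $P_{>J}$ is a contraction on $H^\gamma$: for every $t,s \in [0,T]$,
\[
|f_J(t) - f_J(s)| \le \|P_{>J}(u(t)-u(s))\|_{H^\gamma} \le \|u(t)-u(s)\|_{H^\gamma},
\]
which tends to zero as $s\to t$ by the hypothesis $u \in C([0,T]; H^\gamma)$. Property (ii) is obvious: the indicator sets $\{|\xi| \ge 0.01 J\}$ shrink as $J$ grows. For property (iii), write
\[
f_J(t)^2 \lesssim \int_{\mathbb{R}^3} \langle \xi\rangle^{2\gamma}\, |\widehat{u}(t,\xi)|^2 \, 1_{|\xi|\ge 0.01 J}\, d\xi,
\]
and observe that the integrand converges pointwise to $0$ as $J\to\infty$ and is dominated by the integrable function $\langle\xi\rangle^{2\gamma}|\widehat u(t,\xi)|^2$ (since $u(t)\in H^\gamma$), so Lebesgue's dominated convergence theorem gives $f_J(t)\to 0$.

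With (i)--(iii) in hand, Dini's theorem yields $\sup_{t\in [0,T]} f_J(t) \to 0$ as $J\to\infty$, which is precisely \eqref{1.34ca}. There is no real obstacle in this argument; the only point worth emphasizing is that the continuity-in-time of $u$ into $H^\gamma$ is precisely what converts the trivial pointwise-in-$t$ smallness of high-frequency tails into uniform-in-$t$ smallness, via the compactness of $[0,T]$. Equivalently, one can phrase the conclusion as the statement that any compact subset of $H^\gamma$ has uniformly vanishing high-frequency tail, applied to the compact curve $\{u(t):t\in[0,T]\}$.
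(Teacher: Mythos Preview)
Your proof is correct and takes a genuinely different, more elementary route than the paper's own argument. The paper does not use Dini or the compactness of the trajectory; instead it plugs in the mild formulation $u=e^{t\Delta}u_0+\int_0^t e^{(t-s)\Delta}Q(u,u)(s)\,ds$ and treats the two pieces separately. For the linear flow one has trivially $\|P_{>J}e^{t\Delta}u_0\|_{H^\gamma}\le\|P_{>J}u_0\|_{H^\gamma}\to 0$, and for the Duhamel integral the paper exploits heat smoothing together with product laws in Sobolev spaces to obtain a \emph{quantitative} bound of order $J^{-\epsilon}$ (with cases $\gamma\in]1/2,3/2[$, $\gamma=3/2$, $\gamma>3/2$ handled separately). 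Your argument, by contrast, uses nothing about the equation beyond $u\in C([0,T];H^\gamma)$, and the final remark---that the image $\{u(t):t\in[0,T]\}$ is compact in $H^\gamma$, hence has uniformly small high-frequency tails---makes this transparent. The trade-off is that the paper's approach yields an explicit decay rate in $J$ (useful, for instance, in the discussion of Remark~\ref{S1rmk1}(2) where $u_0\in H^m$ with $m>\gamma$ forces $\eta_J^\gamma\lesssim J^{-(m-\gamma)}$), whereas your soft-analysis proof gives only the qualitative limit, which is however all that the lemma asserts and all that is invoked in the proof of Theorem~\ref{thm1}.
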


We admit the lemma for the time being and postpone its proof till the end of this section.

We now continue our proof of Theorem \ref{thm1}. For $\eta_0<\f{\d}{8(2\ga-1)}$ and ${\eta_J^\ga(t)}$ being given by \eqref{S1eq2}, we define
\beq\label{S2eq26}
\begin{split}
\lambda_\ve(T)\eqdefa&\sqrt{ (2\gamma-1) \bigl(|\ln T|  + \ln |\ln T|\bigr) +3 \beta_\ve(T) }\with\\
\beta_\ve(T)\eqdefa&
\min\bigl\{\  |\ln \bigl(\ve+\eta_{T^{-\frac 12}}^\ga(T)\bigr)  |, \, \frac 12\bigl(\gamma-\frac 12\bigr) |\ln T|\ \bigr\},
\end{split}
\eeq
and
\beq \label{S2eq27}
\begin{split}
T_2\eqdefa \sup\Bigl\{\ T&\leq T_1,\   e^{4\eta_0\la_\ve^2(T)}T^\d\leq 1,\\
 &C\lambda^{2-\delta}_\ve(T) T^{\frac {\delta} 2}\bigl(1+\la^{\f12+\d+\ga}_\ve(T)T^{\f12\left(\ga-\f12-\d\right)}e^{0.01\la^2_\ve(T)}\bigr) \|u_0\|_{H^\ga}\leq \f12 \ \Bigr\}.
 \end{split}
\eeq
We observe that due to $\ga>\f12+\d,$ $T_2$ is well-defined for $\d$ being sufficiently  small.

Then by virtue of \eqref{S2eq25a}, for any $n\geq{\rm N}$  and $T\leq T_2,$ we achieve
\beq \label{S2eq28}
\begin{split}
\|u_{n+1}\|_{X_T} \leq &   C\Bigl(T^{\frac 12(\gamma-\frac 12)} \lambda_\ve^{-(\gamma-\frac 12-\delta)}(T)\bigl(1+\|u_{0}\|_{H^{\gamma} }\bigr)
\bigl(\ve+   \| u_{\rm h} \|_{L_t^{\infty}(H^{\gamma})}\bigr)
\\
& +\lambda_\ve^{-\delta}(T) e^{\frac {\lambda_\ve^2(T)}{4}} \|u_n\|_{X_T}^2\Bigr)
+ \f12 \|u_n\|_{X_T}.
\end{split}
\eeq
Let us compare $\|u_{n+1}\|_{X_T} $ with $Z,$ which is determined by
\beno
2Z= C\Bigl(T^{\frac 12\left(\gamma-\frac 12\right)} \lambda_\ve^{-\left(\gamma-\frac 12-\delta\right)}(T)\bigl(1+\|u_{0}\|_{H^{\gamma} }\bigr)
\bigl(\ve+   \| u_{\rm h} \|_{L_T^{\infty}(H^{\gamma})}\bigr)
+\lambda_\ve^{-\delta}(T) e^{\frac {\lambda^2_\ve(T)}{4}}Z^2\Bigr).
\eeno
In particular, thanks to Lemma \ref{S2lem3} and \eqref{S2eq26}, for any sufficiently small  positive constant $c_0,$ which will be determined later on, we can shrink
$t_0\leq T_2$ to be so small  that
\beq\label{S2eq29}
t_0\eqdefa\sup \bigl\{ T\leq T_2,
C^2T^{\frac 12\left(\gamma-\frac 12\right)} \lambda_\ve^{-\left(\gamma-\frac 12\right)}(T)\bigl(1+\|u_{0}\|_{H^{\gamma} }\bigr)
\bigl(\ve+   \| u_{\rm h} \|_{L_T^{\infty}(H^{\gamma})}\bigr) e^{\frac {\lambda_\ve^2(T)}{4}}\leq c_0 \bigr\}.
\eeq
We notice that for $\la_\ve(T)$ defined by \eqref{S2eq26}, $t_0$ defined by above can be reached.

Then we deduce that for $n\geq{\rm N}$ and $T\leq t_0,$
\beq\label{S2eq31a}
\|u_{n+1}\|_{X_T} \leq CT^{\frac 12\left(\gamma-\frac 12\right)} \lambda_\ve^{-\left(\gamma-\frac 12-\delta\right)}(T)\bigl(1+\|u_{0}\|_{H^{\gamma} }\bigr)
\bigl(\ve+   \| u_{\rm h} \|_{L_T^{\infty}(H^{\gamma})}\bigr).
\eeq

\noindent{\bf Step 3.} The convergence of  $\{t^{\f\d2}\T(u_n)\}$ in $L^\infty([0,t_0]; \dot H^{\f12+\d}).$\\

Indeed we deduce
 from \eqref{S2eq1} and Lemma \ref{S2lem2} that for $T\leq t_0,$
\beq\label{S2eq30}
\begin{split}
\bigl\|t^{\f{\d}2}\T(u_{n+1}-u_n)\bigr\|_{L^\infty_T(\dot H^{\f12+\d})}
\leq &\bigl\|t^{\f{\d}2}\T\left((u_{n+1}-u_n)_{\rm l}\right)\bigr\|_{L^\infty_T(\dot H^{\f12+\d})}+\|u_{n+1}-u_n\|_{X_T}\\
\leq &C\|u_{n+1}-u_n\|_{L^\infty_T(H^\ga)}+\|u_{n+1}-u_n\|_{X_T}.
\end{split}
\eeq
Notice from \eqref{S2eq25} that
\begin{align*}
\|u_{n+1}-u_n\|_{X_T}\leq & \bigl\| t^{\frac {\delta } 2} | \xi |^{\delta+\frac 12} 1_{0.01N_1\leq |\xi| < 0.1 N_1 }
   e^{\lambda \frac t {\sqrt T } |\xi| -\frac {\lambda^2 t}{4 T} } (\wh{u}_{n+1}-\wh{u}_n)\bigr\|_{L^\infty_T(L^2_\xi)}\\
  &+\bigl\| t^{\frac {\delta } 2}1_{|\xi| \ge 0.1 N_1 }
  | \xi |^{\delta+\frac 12} e^{\lambda \frac t {\sqrt T } |\xi| -\frac {\lambda^2 t}{4 T} } \int_0^te^{-(t-s)|\xi|^2}
  \bigl(\widehat{Q}(u_{n}-u_{n-1},u_{n})\\
  &\qquad\qquad\qquad\qquad\qquad\qquad\qquad+\widehat{Q}(u_{n-1},u_{n}-u_{n-1})\bigr)(s,\xi)\,ds \bigr\|_{L^\infty_T(L^2_\xi)}
  \end{align*}
  from which,  \eqref{S2eq3} and Proposition \ref{S2prop2}, we infer
  \begin{align*}
\|u_{n+1}-u_n\|_{X_T}\leq & C\Bigl(\bigl(T^{\frac 12\left(\gamma-\frac 12\right)} \lambda_\ve^{-\left(\gamma-\frac 12-\delta\right)}(T)
\bigl[\|u_{n+1}-u_{n}\|_{L^\infty_T(H^\ga)}\\
&+T^\d
\bigl(\|u_n\|_{L^\infty_T(H^\ga)}+\|u_{n-1}\|_{L^\infty_T(H^\ga)}\bigr)\|u_{n}-u_{n-1}\|_{L^\infty_T(H^\ga)}\bigr]\\
&+\la_\ve^{2-\d}(T)T^{\f\d2}\bigl(\|u_{n}\|_{X_T}+\|u_{n-1}\|_{X_T}\bigr)
\|u_{n}-u_{n-1}\|_{L^\infty_T(H^\ga)}\\
&+\|u_{n}-u_{n-1}\|_{X_T}
\bigl[\la_\ve^{-\d}(T)e^{\f{\la_\ve^2(T)}4}(\|u_n\|_{X_T}+\|u_{n-1}\|_{X_T})\\
&\qquad\qquad\qquad\qquad+\la_\ve^{2-\d}T^{\f\d2}(\|u_{n}\|_{L^\infty_T(H^\ga)}+
\|u_{n-1}\|_{L^\infty_T(H^\ga)})\bigr] \Bigr).
\end{align*}
It follows from \eqref{S2eq29} and \eqref{S2eq31a} that for $T\leq t_0$
\begin{align*}
\la_\ve^{-\d}e^{\f{\la_\ve^2(T)}4}(\|u_n\|_{X_T}+\|u_{n-1}\|_{X_T})
\leq &2CT^{\frac 12\left(\gamma-\frac 12\right)} \lambda_\ve^{-\left(\gamma-\frac 12\right)}(T)\\
&\times\bigl(1+\|u_{0}\|_{H^{\gamma} }\bigr)
\bigl(\ve+   \| u_{\rm h} \|_{L_T^{\infty}(H^{\gamma})}\bigr)e^{\f{\la_\ve^2(T)}4}\leq \f2{C}c_0.
\end{align*}
Then if we
 take $c_0$ to be so small that $c_0\leq \f18,$
we deduce from \eqref{S2eq24} that for $T\leq t_0,$
\begin{align*}
\|u_{n+1}-u_n\|_{X_T}\leq & C\bigl(\|u_{n+1}-u_{n}\|_{L^\infty_T(H^\ga)}+\|u_{0}\|_{H^{\gamma} }\|u_{n}-u_{n-1}\|_{L^\infty_T(H^\ga)}\bigr)\\
&+\bigl(\f14+C\la_\ve^{2-\d}(T)T^{\f\d2}\|u_{0}\|_{H^{\gamma} }\bigr)\|u_{n}-u_{n-1}\|_{X_T},
\end{align*}
from which and
 \eqref{S2eq27}, we infer for $ T\leq t_0$
\beno
\|u_{n+1}-u_n\|_{X_T}\leq C\bigl(\|u_{n+1}-u_n\|_{L^\infty_T(H^\ga)}+\|u_{n}-u_{n-1}\|_{L^\infty_T(H^\ga)}\bigr)+\f12\|u_{n}-u_{n-1}\|_{X_T}.
\eeno
By inserting the above estimate into \eqref{S2eq30}, we find for $T\leq t_0,$
\beq \label{S2eq31}
\begin{split}
\bigl\|t^{\f{\d}2}\T(u_{n+1}-u_n)\bigr\|_{L^\infty_T(\dot H^{\f12+\d})}
\leq &\f12\bigl\|t^{\f{\d}2}\T(u_{n}-u_{n-1})\bigr\|_{L^\infty_T(\dot H^{\f12+\d})} \\
&+C\bigl(\|u_{n+1}-u_n\|_{L^\infty_T(H^\ga)}+\|u_{n}-u_{n-1}\|_{L^\infty_T(H^\ga)}\bigr).
\end{split}
\eeq
Hence $\bigl\{ t^{\f\d2}\T(u_n) \bigr\}$ is a Cauchy sequence in $L^\infty_T(\dot H^{\f12+\d})$ for any $T\leq t_0.$ As a result, it { follows that}
\beq\label{S2eq34}
\lim_{n\to\infty}\bigl\|t^{\f{\d}2}\T(u_{n+1}-u)\bigr\|_{L^\infty_T(\dot H^{\f12+\d})}=0 \andf
\bigl\|t^{\f\d2}e^{-\f{\la^2_\ve t}{4T}}e^{\f{\la_\ve t}{\sqrt{T}}|D|}u(t)\bigr\|_{L^\infty_T(\dot H^{\f12+\d})}\leq C,
\eeq for any $T\leq t_0.$

In particular, by taking $t=T$ in \eqref{S2eq34}, we obtain
\beno
\bigl\|e^{{\la_\ve(T) \sqrt{T}}|D|}u(T)\bigr\|_{\dot H^{\f12+\d}}\leq C T^{-\f\d2}e^{\f{\la_\ve^2(T)}4}\quad \mbox{for any} \ T\leq t_0.
\eeno
By taking $\e\to 0$ in the above inequality, we obtain
\beno 
\bigl\|e^{{\la(T) \sqrt{T}}|D|}u(T)\bigr\|_{\dot H^{\f12+\d}}\leq C T^{-\f\d2}e^{\f{\la^2(T)}4},
\eeno
{for any}  $ T\leq t_0$ and $\la(T)$ being given by \eqref{S1eq2}, which together with \eqref{S1eq2} ensures \eqref{S2eq35}. This completes the proof of Theorem \ref{thm1}.
\end{proof}

Let us now present the proof of Lemma \ref{S2lem3}.

\begin{proof}[Proof of Lemma \ref{S2lem3}]
We first observe that
 the linear part satisfies
\begin{align*}
\sup_{t\geq 0}\bigl\| P_{> J} e^{t\Delta} u_0 \bigr\|_{H^{\gamma}} \le \| P_{>J} u_0 \|_{H^{\gamma}} \to 0,
\qquad \text{as $J\to \infty$}.
\end{align*}

For the nonlinear part, we first deal with the case when $\gamma \in ]1/2, 3/2[.$ Then for $J\geq 1$ and sufficiently small constant, $\epsilon>0,$
satisfying $\ga>\f12+\epsilon,$
we get, by using the law of product in Sobolev spaces (see Theorem 8.3.1 of \cite{H97} for instance {or see
\cite{LiRMI19}}), that
\begin{align*}
& \bigl\| \int_0^t P_{>J}  \w{D}^{\gamma}  e^{(t-s) \Delta } Q(u,u)(s) \,ds \bigr\|_{L^2} \\
&\lesssim  J^{-\epsilon}  \int_0^t \bigl\| |D|^{1+\epsilon+\gamma} P_{>J}e^{(t-s)\Delta} ( u\otimes u)(s) \bigr\|_{L^2}\, ds  \\
&\lesssim  J^{-\epsilon}  \int_0^t \bigl\| |D|^{\f52-\gamma+\epsilon} P_{>J}e^{(t-s)\Delta}|D|^{2\ga-\f32} ( u\otimes u)(s) \bigr\|_{L^2}\, ds  \\
& \lesssim  J^{-\epsilon} \int_0^t  (t-s)^{-\frac12 \left(\frac 52 -\gamma+\epsilon\right)}\| u(s) \|_{H^{\gamma} }^2 \,ds\\
  & \lesssim  J^{-\epsilon} t^{\f12\left(\ga-\f12-\epsilon\right)} \| u\|_{L^\infty_t(H^{\gamma})}^2
  \to 0,
 \qquad \text{as $J\to \infty$},
 \end{align*} where in the last step, we used the fact that $\ga>\f12+\epsilon.$

For $\gamma=\frac 32$,  we take $\epsilon\in ]0,1[.$
{By a slight modification of the above argument, we have}
\begin{align*}
& \bigl\| \int_0^t P_{>J}  \w{D}^{\gamma}  e^{(t-s) \Delta } Q(u,u)(s) \,ds \bigr\|_{L^2} \\
 &\lesssim J^{-\epsilon}  \int_0^t \bigl\| |D|^{1+\epsilon+\frac 32} P_{>J}e^{(t-s)\Delta} ( u\otimes u)(s) \bigr\|_{L^2}\, ds\\
 &\lesssim J^{-\epsilon}  \int_0^t \bigl\| |D|^{1+2\epsilon} P_{>J}e^{(t-s)\Delta}\w{D}^{\f32-\e} ( u\otimes u)(s) \bigr\|_{L^2} \,ds\\
& \lesssim  J^{-\epsilon} \int_0^t  (t-s)^{-\frac12 (1+2\epsilon)}\| u(s) \|_{H^{\frac 32} }^2 \, ds  \lesssim J^{-\epsilon}t^{\f12-\epsilon}
\| u\|_{L^\infty_t(H^{\frac 32})}^2 \to 0,
 \qquad \text{as $J\to \infty$}.
 \end{align*}

Finally for $\gamma>\frac 32$, we note that $H^{\gamma}$ is  an algebra. As a consequence, we deduce that
\begin{align*}
& \bigl\| \int_0^t P_{>J}  \w{D}^{\gamma}  e^{(t-s) \Delta } Q(u,u)(s) \,ds \bigr\|_{L^2} \\
& \lesssim J^{-\epsilon}  \int_0^t \bigl\| |D|^{1+\epsilon+\gamma} P_{>J}e^{(t-s)\Delta} ( u\otimes u)(s)\bigr\|_2 \,ds  \\
& \lesssim J^{-\epsilon} \int_0^t  (t-s)^{-\frac12 (1+\epsilon)}\| u(s) \|_{H^{\gamma} }^2 \, ds  \lesssim J^{-\epsilon}t^{\f12(1-\epsilon)} \| u\|_{L^\infty_t(H^{\gamma})}^2 \to 0,
 \qquad \text{as $J\to \infty$}.
 \end{align*}

{Collecting the above estimates}, we conclude the proof of \eqref{1.34ca}.
\end{proof}

\vskip 0.2cm

\setcounter{equation}{0}

\section{The critical case: $\gamma=\frac 12$}
 The goal of
this section is to present the proof of Theorem \ref{thm2}, namely,  the critical case of Theorem \ref{thm1} for $\gamma=\frac 12$.
In order to do so, we first { need to adapt}  Lemma \ref{S2lem3} to the critical case  $\ga=\f12.$

\begin{prop}[Uniform high frequency smallness {in} $\dot H^{\frac 12}$]\label{S3prop1}
{\sl Let $u_0 \in \dot H^{\frac 12}$.  There exists a positive time
 ${T}={T}(u_0)$ so that the system $(GNS)$ with initial data $u_0$ has  a unique solution $u\in
C([0, {T}]; \dot H^{\frac 12})\cap L^2(]0,T[; \dot H^{\f32})$ and
\beq\label{S3eq1}
\begin{split}
\|u&\|_{L_{T}^{\infty}  (\dot{H}^{\f12})}+\|\na u\|_{L^2_{T}(\dot{H}^{\f12})}+\| t^{\frac {m+\delta}2}u \|_{L_{T}^{\infty}(\dot{H}^{m+\d+\f12})}\\
&+\| t^{\frac {m+\delta}2}\na u \|_{L_{T}^{2}(\dot{H}^{m+\d+\f12})}
 \le {C} \| u_0\|_{\dot{H}^{\frac12} }\quad \mbox{for any integer} \ m\geq 0\andf \d\in ]0,1[.
\end{split}\eeq
 Furthermore there holds
 \begin{align}  \label{eta2.2a}
\zeta_J^{\f12}(T)\eqdefa \max_{0\le t \le {T} } \| 1_{|\xi| \ge J} |\xi|^{\frac 12}
\widehat{ u }(t,\xi)\|_{L_{\xi}^2} \to 0, \quad
\text{as} \ J\to \infty.
\end{align}}
\end{prop}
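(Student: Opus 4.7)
The plan is to establish Proposition \ref{S3prop1} in three stages: local well-posedness in the critical space, the higher-regularity smoothing bound \eqref{S3eq1}, and finally the high-frequency smallness \eqref{eta2.2a}. The first two stages are largely parallel to the subcritical framework already set up in Proposition \ref{S2prop2} and the proof of Theorem \ref{thm1}.

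For the well-posedness stage, I would adapt the iteration scheme $u_{n+1}=e^{t\D}u_0+\cB(u_n,u_n)$ to the critical setting. Following Fujita--Kato, the natural working space is $L_T^\infty(\dot H^{\f12})\cap L_T^2(\dot H^{\f32})$ (or equivalently $L_T^4(\dot H^1)$ via the interpolation $\|u\|_{L_T^4 \dot H^1}\lesssim \|u\|_{L_T^\infty \dot H^{\f12}}^{\f12}\|u\|_{L_T^2 \dot H^{\f32}}^{\f12}$). The bilinear operator $\cB$ satisfies the estimate $\|\cB(u,v)\|_{L_T^\infty\dot H^{\f12}\cap L_T^2 \dot H^{\f32}}\lesssim \|u\|_{L_T^4 \dot H^1}\|v\|_{L_T^4 \dot H^1}$ by essentially the same product-law argument as in \eqref{S2eq16}--\eqref{S2eq18}. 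Since $\|e^{t\D}u_0\|_{L_T^4 \dot H^1}\to 0$ as $T\to 0^+$ for $u_0\in \dot H^{\f12}$ (by dominated convergence on the Fourier side), the standard fixed-point argument yields the unique solution for some $T=T(u_0)>0$.

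For the smoothing bound \eqref{S3eq1}, I would proceed by induction on $m$. The base case $m=0$ amounts to showing $\|t^{\f{\d}2}u\|_{L_T^\infty \dot H^{\f12+\d}}+\|t^{\f{\d}2}\nabla u\|_{L_T^2 \dot H^{\f12+\d}}\lesssim \|u_0\|_{\dot H^{\f12}}$. The linear piece is immediate from $\sup_{x\ge 0}(x^{\d/2}e^{-x})<\infty$, which gives
\begin{equation*}
\|t^{\f{\d}2}e^{t\D}u_0\|_{\dot H^{\f12+\d}}\le \sup_{x\ge 0}(x^{\d/2}e^{-x})\cdot \|u_0\|_{\dot H^{\f12}}\lesssim \|u_0\|_{\dot H^{\f12}}.
\end{equation*}
For the nonlinear Duhamel piece, use an energy-type estimate at level $\dot H^{\f12+\d}$ with the time weight $t^{\d}$ and bound the trilinear term via a Kato--Ponce type product estimate together with the critical-scale interpolation, noting that the weight $t^{\d}$ absorbs the $s^{-\d}$ singularity. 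The inductive step from $m$ to $m+1$ uses the same mechanism: take the $\dot H^{m+1+\d+\f12}$ inner product with time weight $t^{m+1+\d}$, bound the nonlinearity using the already-established $m$-th level, and close with a Grönwall argument.

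For the high-frequency smallness \eqref{eta2.2a}, decompose $u=e^{t\D}u_0+u_{NL}$ with $u_{NL}=\cB(u,u)$. The linear contribution is handled by dominated convergence since $|\xi|^{\f12}\wh{u_0}\in L^2$:
\begin{equation*}
\sup_{0\le t\le T}\|1_{|\xi|\ge J}|\xi|^{\f12}e^{-t|\xi|^2}\wh{u_0}(\xi)\|_{L^2}\le \|1_{|\xi|\ge J}|\xi|^{\f12}\wh{u_0}(\xi)\|_{L^2}\to 0\ \ \text{as}\ \ J\to \infty.
\end{equation*}
For the nonlinear part, I would employ a time-splitting argument: for any $t_*\in (0,T]$, on $[t_*,T]$ use the smoothing \eqref{S3eq1} with $m=0$, yielding $\|u_{NL}(t)\|_{\dot H^{\f12+\d}}\lesssim t_*^{-\d/2}$, hence
\begin{equation*}
\sup_{t_*\le t\le T}\|1_{|\xi|\ge J}|\xi|^{\f12}\wh{u_{NL}}(t,\xi)\|_{L^2}\le J^{-\d}\sup_{t_*\le t\le T}\|u_{NL}(t)\|_{\dot H^{\f12+\d}}\lesssim J^{-\d}\,t_*^{-\d/2}.
\end{equation*}
On $[0,t_*]$, exploit $u_{NL}\in C([0,T];\dot H^{\f12})$ with $u_{NL}(0)=0$, so $\sup_{0\le t\le t_*}\|u_{NL}(t)\|_{\dot H^{\f12}}\to 0$ as $t_*\to 0^+$; this controls $\|1_{|\xi|\ge J}|\xi|^{\f12}\wh{u_{NL}}(t,\xi)\|_{L^2}\le \|u_{NL}(t)\|_{\dot H^{\f12}}$ uniformly in $J$. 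Given $\eta>0$, first choose $t_*$ so the $[0,t_*]$ contribution is below $\eta/2$, then choose $J_0$ so the $[t_*,T]$ contribution is below $\eta/2$ for $J\ge J_0$, which yields \eqref{eta2.2a}.

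The main obstacle in this plan is the inductive smoothing bound \eqref{S3eq1}: each induction step requires a tailored product estimate in $\dot H^{m+\d+\f12}$ with the Riesz-transform structure of $Q$ and with the appropriate time weight, and one must verify integrability of the resulting Duhamel kernel near both $s=0$ and $s=t$. Once this is in place, stage three reduces to a clean splitting argument combining dominated convergence for the linear flow with continuity at $t=0$ of the Duhamel piece.
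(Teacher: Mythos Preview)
Your overall architecture matches the paper's: Fujita--Kato fixed point in $L^4_T(\dot H^1)$ for well-posedness, an inductive time-weighted energy estimate for \eqref{S3eq1}, and a short-time/long-time splitting for \eqref{eta2.2a}. Stages 1 and 3 are essentially the same as the paper (the paper splits $u$ rather than $u_{NL}$ and uses $t^{3/4}u\in L^\infty_T\dot H^2$ on $[\tau_0,T]$, but this is cosmetic).

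The gap is in the base case $m=0$ of stage 2. When you run the $t^{\delta}$-weighted energy estimate at level $\dot H^{\frac12+\delta}$, two terms must be controlled that your sketch does not address. First, differentiating the weight produces $\delta\, t^{\delta-1}\|u(t)\|_{\dot H^{\frac12+\delta}}^2$, whose time integral is $\|t^{\frac{\delta-1}{2}}u\|_{L^2_T\dot H^{\frac12+\delta}}^2$; this quantity is not obviously finite from the critical well-posedness alone, and the phrase ``the weight $t^{\delta}$ absorbs the $s^{-\delta}$ singularity'' does not settle it. The paper handles this via a separate Chemin--Planchon type estimate (Lemma \ref{S3lem1}), proved by a $t^{\delta-1}$-weighted $\dot H^{\delta-\frac12}$ energy identity on $\cB(u,u)$ combined with the linear bound $\|t^{\frac{\delta-1}{2}}e^{t\Delta}u_0\|_{L^2(\R^+;\dot H^{\delta+\frac12})}\lesssim\|u_0\|_{\dot H^{\frac12}}$. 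Second, the trilinear term after a product estimate such as $\|u\otimes u\|_{\dot H^{\frac12+\delta}}\lesssim\|u\|_{L^\infty}\|u\|_{\dot H^{\frac12+\delta}}$ requires $\|u\|_{L^2_T(L^\infty)}<\infty$ to close Gr\"onwall; the paper obtains this by interpolating $\cB(u,u)\in L^4_T\dot H^1\cap L^{4/3}_T\dot H^2$ into $L^2_T\dot B^{3/2}_{2,1}\hookrightarrow L^2_TL^\infty$ and using $\|e^{t\Delta}u_0\|_{L^2(\R^+;L^\infty)}\lesssim\|u_0\|_{\dot B^{-1}_{\infty,2}}\lesssim\|u_0\|_{\dot H^{\frac12}}$. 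Neither ingredient is visible in your plan, and without them the induction does not start.

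Once the base case is repaired along these lines, your inductive step and your argument for \eqref{eta2.2a} go through as written.
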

\begin{rmk}
It is possible to obtain all polynomial smoothing estimates in \eqref{S3eq1} at one stroke if we take $T=T(u_0)$
to be { sufficiently small}.
We sketch an alternative argument as follows. The key is to estimate the following $Z$-norm:
\begin{align}
\| u\|_{Z_T}\eqdefa\| t^{\frac 18} |D|^{\frac 34} v \|_{L_T^{\infty}(L^2)}, \with
\widehat v (t,\xi)\eqdefa e^{ \frac {\sqrt t}2 |\xi| } \widehat u(t,\xi).
\end{align}
For the linear part, it is not difficult to check that
\beq\label{3.5ec1}
\begin{split}
& t^{\frac 18} \| |D|^{\frac 34} e^{ \frac{\sqrt t }2|D|} e^{t \Delta} u_0 \|_{L^\infty(\R^+;L^2)}
 \lesssim \| u_0\|_{\dot H^{\frac 12}};\\
&
\lim_{t\to 0^+} t^{\frac 18} \| |D|^{\frac 34} e^{ \frac{\sqrt t}2 |D|} e^{t \Delta} u_0 \|_{L^2} =0.
\end{split}\eeq

On the other hand, for the nonlinear part, we first get, by using the {
triangle} inequality, $|\xi|\leq |\xi-\eta|+|\eta|$ for $\xi,\eta\in\R^3,$ that
\begin{align*}
 &  t^{\frac 18} \int_0^t \bigl\|e^{ \frac{\sqrt t}2 |D|} |D|^{\frac 34} e^{(t-s) \Delta} Q(u(s), u(s) ) \bigr\|_{L^2} \,ds  \\
 & \leq t^{\frac 18} \int_0^t \bigl\||\xi|^{\f74}e^{ \frac12({\sqrt t}-{\sqrt s})|\xi|}  e^{-(t-s)|\xi|^2}|\widehat{v}(s)|\ast|\widehat v(s)| \bigr\|_{L^2_\xi} \,ds  \\
  & \lesssim t^{\frac 18} \int_0^t \bigl\||\xi|^{\f74}  e^{-\f12(t-s)|\xi|^2}|\widehat{v}(s)|\ast|\widehat v(s)|  \bigr\|_{L^2_\xi} \,ds  \\
& \lesssim  t^{\frac 18} \int_0^t (t-s)^{-\frac 78} \| |\widehat{v}(s)|\ast|\widehat v(s)| \|_{L_{\xi}^2}\, ds\\
 &\lesssim   t^{\frac 18} \int_0^t (t-s)^{-\frac 78} \bigl\|\mathcal{F}_{\xi\to x}(|\wh{v}(s,\xi)|)\big\|_{L^{4}}^2\,ds.
  \end{align*}
 Then by applying Sobolev imbedding inequality, $H^{\f34}(\R^3)\hookrightarrow L^4(\R^3),$ we obtain
  \begin{align*}
   t^{\frac 18} \int_0^t \bigl\|e^{ \frac{\sqrt t}2 |D|} |D|^{\frac 34} e^{(t-s) \Delta} Q(u(s), u(s) ) \bigr\|_{L^2} \,ds
 &\lesssim  t^{\frac 18} \int_0^t (t-s)^{-\frac 78} \bigl\| |D|^{\frac 34}\mathcal{F}_{\xi\to x}(|\wh{v}(s,\xi)|)\bigr\|_{L^2}^2 \,ds
  \\
& \lesssim t^{\frac 18} \int_0^t (t-s)^{-\frac 78}s^{-\f14}\,ds \| u\|_{Z_T}^2\\
&\lesssim \| u\|_{Z_T}^2.
 \end{align*}
from which and  \eqref{3.5ec1}, we deduce from  the fixed point theorem (see Lemma 5.5 of \cite{bcd} for instance) that
there exists $T$ so that \eqref{S2eq20} has a unique solution $u$ on $[0,T]$ with
 $\|u\|_{Z_T} \lesssim \|u_0 \|_{\dot H^{\frac 12}},$ which in particular
implies all polynomial smoothing estimates.
\end{rmk}

\begin{proof}[Proof of Proposition \ref{S3prop1}]
 We first get, by a similar derivation of \eqref{S2eq21a}, that
\begin{align*}
\|e^{t\D}u_0\|_{L^4(\R^+;\dot{H}^1)}=&\bigl\||\xi|e^{-t|\xi|^2}\wh{u}_0(\xi)\bigr\|_{L^4(\R^+;L^2_\xi)}\\
\leq &\bigl\||\xi|\|e^{-t|\xi|^2}\|_{L_t^4(\R^+)}\wh{u}_0(\xi)\bigr\|_{L^2_\xi}
\leq C\|u_0\|_{\dot{H}^{\f12}},
\end{align*}
which in particular implies
\beq \label{S3eq2}
\lim_{T\to 0}\|e^{t\D}u_0\|_{L^4_T(\dot{H}^1)}=0.
\eeq
{For $\cB(u,v)$ being determined by \eqref{S2eq14}}, we deduce from  \eqref{S2eq19} that
\beno
\|\cB(u,v)\|_{L^4_T(\dot{H}^1)}\leq C_0\|u\|_{L^4_T(\dot{H}^1)}\|v\|_{L^4_T(\dot{H}^1)}\quad\mbox{if}\ \ T\leq 1.
\eeno
{ Take $T(u_0)>0$ sufficiently small such that}
\beno
4C_0\|e^{t\D}u_0\|_{L^4_T(\dot{H}^1)}<1.
\eeno
{ The} fixed point theorem (see Lemma 5.5 of \cite{bcd} for instance) ensures that
\eqref{S2eq20} {admits} a unique solution $u$ in $L^4(]0,T[; \dot{H}^1).$ Moreover, it follows from \eqref{S2eq15} that
\beq\label{S3eq3}
\begin{split}
&\|u\|_{L^\infty_T(\dot{H}^{\f12})}+\|\na u\|_{L^2_T(\dot{H}^{\f12})}\leq C(\|u_0\|_{\dot{H}^{\f12}}) \andf \\
&\|u\|_{L^q_T(\dot{H}^{\f12+\f2q})}\leq \|u\|_{L^\infty_T(\dot{H}^{\f12})}^{1-\f2q}\|u\|_{L^2_T(\dot{H}^{\f32})}^{\f2q}\leq C(\|u_0\|_{\dot{H}^{\f12}})
\quad \mbox{for any}\ \ q\in [2,\infty].
\end{split}
\eeq

For any $p\in ]1,\infty[,$ we choose $q\in ]2,\infty[$ with $p>\f{q}2.$ Then we get, by applying the law of product in Sobolev space, that
\begin{align*}
\|\cB(u,u)(t)\|_{\dot{H}^{\f12+\f2p}}\lesssim
&\int_0^t\bigl\||{\xi}|^{\f12+\f2p}e^{-(t-s)|\xi|^2}|\xi|\wh{u\otimes u}(s)\bigr\|_{L^2}\,ds\\
\lesssim
&\int_0^t\bigl\||{\xi}^{2+\f2p-\f4q}e^{-(t-s)|\xi|^2}|{\xi}|^{\f4q-\f12}\wh{u\otimes u}(s)\bigr\|_{L^2}\,ds\\
\lesssim
&\int_0^t(t-s)^{-1-\f1p+\f2q}\|u(s)\|_{\dot{H}^{\f12+\f2q}}^2\,ds,
\end{align*}
from which, \eqref{S3eq3} and Hardy-Littlewood-Sobolev inequality, we infer
\begin{align*}
\|\cB(u,u)\|_{L^p_T(\dot{H}^{\f12+\f2p})}\lesssim &\bigl\|\int_0^\infty |t-s|^{-1-\f1p+\f2q}\chi_{[0,T]}(s)\|u(s)\|_{\dot{H}^{\f12+\f2q}}^2\,ds
\bigr\|_{L^p}\\
\lesssim &\|u\|_{L^q_T(\dot{H}^{\f12+\f2q})}^2\leq C(\|u_0\|_{\dot{H}^{\f12}}).
\end{align*}
Hence, by applying interpolation inequality, we obtain
\beq\label{S3eq3a}
\|\cB(u,u)\|_{L^2_T(\dot B^{\f32}_{2,1})}\lesssim \|\cB(u,u)\|_{L^4_T(\dot{H}^1)}^{\f12}\|\cB(u,u)\|_{L^{\f43}_T(\dot H^2)}^{\f12}\leq C(\|u_0\|_{H^{\f12}}),
\eeq
where $\|a\|_{\dot B^{\f32}_{2,1}}$ denotes
 the homogeneous Besov norm of $a$ in the space $\dot B^{\f32}_{2,1}$ (see Definition 2.15 of \cite{bcd};
 {see also Lemma 2.7 of \cite{LiRMI19} and the discussion therein for
 more general interpolation inequalities}).

Whereas it follows from Theorem 2.34 that
\begin{align*}
\|e^{t\D}u_0\|_{L^2(\R^+;L^\infty)}=\Bigl(\int_0^\infty \bigl\|t^{\f12}e^{t\D}u_0\|_{L^\infty}^2\f{dt}{t}\Bigr)^{\f12}
=\|u_0\|_{\dot B^{-1}_{\infty,2}}\leq C\|u_0\|_{\dot{H}^{\f12}},
\end{align*}
which together with  \eqref{S3eq3a} ensures that
\beq \label{S3eq4}
\|u\|_{L^2_T(L^\infty)}\leq C(\|u_0\|_{\dot{H}^{\f12}}).
\eeq

Next let us turn to the time-weighted energy estimate. For simplicity, we just present the
{\it a  priori } estimate. Indeed, for any $\d\in ]0,1[,$ we get, by taking $\dot{H}^{m+\d+\f12}$ inner product of $(GNS)$ with
$u,$ that
\beq \label{S3eq4a}
\begin{split}
\f12\f{d}{dt}\|u(t)\|_{\dot{H}^{m+\d+\f12}}^2+\|\na u\|_{\dot{H}^{m+\d+\f12}}^2=
&\bigl(|{D}|^{m+\d+\f12}Q(u,u), |{D}|^{m+\d+\f12}u\bigr)_{L^2}\\
\lesssim &\|u\otimes u\|_{\dot{H}^{m+\d+\f12}}\|\na u\|_{\dot{H}^{m+\d+\f12}}\\
\leq &C\|u\|_{L^\infty}^2\|u\|_{\dot{H}^{m+\d+\f12}}^2+\f12\|\na u\|_{\dot{H}^{m+\d+\f12}}^2,
\end{split} \eeq
where we used Bony's decomposition \cite{Bo} that $u\otimes u=T_u\otimes u+u\otimes T_u+R(u,u)$ so that
\beno \|u\otimes u\|_{\dot{H}^{m+\d+\f12}}
\leq C\|u\|_{L^\infty}\|u\|_{\dot{H}^{m+\d+\f12}}. \eeno

{Multiplying} the inequality \eqref{S3eq4a} by $t^{m+\d},$ we obtain
\begin{align*}
&\f{d}{dt}\|t^{\f{m+\d}2}u(t)\|_{\dot{H}^{m+\d+\f12}}^2+\|t^{\f{m+\d}2}\na u\|_{\dot{H}^{m+\d+\f12}}^2\\
&\leq (m+\d)
\|t^{\f{m-1+\d}2}u(t)\|_{\dot{H}^{m+\d+\f12}}^2+C\|u\|_{L^\infty}^2\|t^{\f{m+\d}2}u(t)\|_{\dot{H}^{m+\d+\f12}}^2.
\end{align*}
Applying Gronwall's inequality gives rise to
\beq \label{S3eq5}
\begin{split}
\|t^{\f{m+\d}2}u\|_{L^\infty_t(\dot{H}^{m+\d+\f12})}^2&+\|t^{\f{m+\d}2}\na u\|_{L^2_t(\dot{H}^{m+\d+\f12})}^2\leq \Bigl(\|t^{\f{m-1+\d}2}\na u(t)\|_{L^2_t(\dot{H}^{m+\d-\f12})}^2\\
&\qquad+(m+\d)\|t^{\f{m-1+\d}2} u(t)\|_{L^2_t(\dot{H}^{m+\d-\f12})}^2\Bigr)
\exp\bigl(C\|u\|_{L^2_t(L^\infty)}^2\bigr).
\end{split}
\eeq

With \eqref{S3eq4} and \eqref{S3eq5}, to conclude the proof of the time-weighted estimate part in \eqref{S3eq1} via
induction argument, we still need the following lemma, the proof of which will be postponed after we finish the proof
of Proposition \ref{S3prop1}.

\begin{lem}\label{S3lem1}
{\sl Under the assumptions of Proposition \ref{S3prop1}, for any $t\leq T(u_0),$ one has
\beq \label{S3eq6a}
\|t^{\f{\d}2}u\|_{L^\infty_t(\dot{H}^{\d+\f12})}^2+\|t^{\f{\d}2}\na u\|_{L^2_t(\dot{H}^{\d+\f12})}^2\leq C(\|u_0\|_{\dot{H}^{\f12}}).
\eeq}
\end{lem}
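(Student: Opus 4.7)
The plan is to prove Lemma \ref{S3lem1} by running an energy estimate in $\dot H^{\delta+\f12}$, handling the borderline weight $t^{\delta-1}$ that appears after multiplication by $t^\delta$ via the Duhamel representation $u=e^{t\Delta}u_0+\cB(u,u)$ and direct Fourier analysis. All the ingredients needed, $u\in L^\infty_T(\dot H^{\f12})\cap L^2_T(\dot H^{\f32})\cap L^2_T(L^\infty)$ with the corresponding bounds, are already in place from \eqref{S3eq3} and \eqref{S3eq4}.

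First I would take the $\dot H^{\delta+\f12}$-inner product of $(GNS)$ with $u$ to obtain
\beno
\f12\f{d}{dt}\|u\|_{\dot H^{\delta+\f12}}^2+\|\na u\|_{\dot H^{\delta+\f12}}^2=\bigl(|D|^{\delta+\f12}Q(u,u),|D|^{\delta+\f12}u\bigr)_{L^2}.
\eeno
For the nonlinear term I would integrate by parts to put one derivative on $u$ and invoke the bilinear Sobolev embedding $\dot H^{1+\f\d2}\cdot \dot H^{1+\f\d2}\hookrightarrow \dot H^{\delta+\f12}$ (valid since $1+\f\d2<\f32$ as $\delta<1$), combined with the interpolation $\|u\|_{\dot H^{1+\f\d2}}\leq \|u\|_{\dot H^{\f12}}^{\f12}\|u\|_{\dot H^{\delta+\f32}}^{\f12}$, to deduce $\|u\otimes u\|_{\dot H^{\delta+\f12}}\lesssim \|u\|_{\dot H^{\f12}}\|u\|_{\dot H^{\delta+\f32}}$. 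By Cauchy-Schwarz and Young, since $\|u\|_{L^\infty_T(\dot H^{\f12})}$ is controlled by $\|u_0\|_{\dot H^{\f12}}$ (which is small when we shrink $T(u_0)$ appropriately), the cubic term can be absorbed, giving
\beno
\f{d}{dt}\|u\|_{\dot H^{\delta+\f12}}^2+\|\na u\|_{\dot H^{\delta+\f12}}^2\lesssim \|u\|_{L^\infty}^2\|u\|_{\dot H^{\delta+\f12}}^2.
\eeno
Multiplying by $t^\delta$ and integrating from $0$ to $t$ yields
\beno
t^\delta\|u(t)\|_{\dot H^{\delta+\f12}}^2+\int_0^t s^\delta\|\na u(s)\|_{\dot H^{\delta+\f12}}^2\,ds\leq \delta\int_0^t s^{\delta-1}\|u(s)\|_{\dot H^{\delta+\f12}}^2\,ds+C\int_0^t s^\delta\|u\|_{L^\infty}^2\|u\|_{\dot H^{\delta+\f12}}^2\,ds,
\eeno
and the second term on the right can be dealt with by Gronwall thanks to \eqref{S3eq4}.

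The main obstacle is the singular integral $\int_0^t s^{\delta-1}\|u(s)\|_{\dot H^{\delta+\f12}}^2\,ds$, which is borderline in scaling. Any naive interpolation $\|u\|_{\dot H^{\delta+\f12}}\leq \|u\|_{\dot H^{\f12}}^{1-\delta}\|u\|_{\dot H^{\f32}}^{\delta}$ coupled with the $L^2_T(\dot H^{\f32})$ bound leads to a divergent $\int_0^t s^{-1}\,ds$, so this term must be attacked differently. My plan is to split $u=e^{t\Delta}u_0+\cB(u,u)$ and bound each piece. For the linear part, direct Fourier computation gives
\beno
\int_0^t s^{\delta-1}\|e^{s\Delta}u_0\|_{\dot H^{\delta+\f12}}^2\,ds=\int|\widehat{u_0}(\xi)|^2|\xi|\int_0^{t|\xi|^2}\tau^{\delta-1}e^{-2\tau}\,d\tau\,d\xi\lesssim \|u_0\|_{\dot H^{\f12}}^2,
\eeno
the $\tau$-integral being finite precisely because $\delta>0$. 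For the Duhamel piece $\cB(u,u)$ I would apply the same bilinear estimate $\|e^{(s-\tau)\Delta}Q(u,u)\|_{\dot H^{\delta+\f12}}\lesssim (s-\tau)^{-\f12}\|u(\tau)\|_{\dot H^{\f12}}\|u(\tau)\|_{\dot H^{\delta+\f32}}$ and, after Young's convolution inequality in time and Hölder against the weight $s^{\delta-1}$, reduce to $\|u\|_{L^\infty_T(\dot H^{\f12})}^2\cdot \|t^{\f\d2}\na u\|_{L^2_T(\dot H^{\delta+\f12})}^2$, which can be absorbed into the left-hand side for $T(u_0)$ sufficiently small.

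Finally, applying Gronwall's inequality to control the remaining $\|u\|_{L^\infty}^2$ factor (bounded thanks to \eqref{S3eq4}), I would arrive at \eqref{S3eq6a}. The step I expect to be the most delicate is the Duhamel/Hölder analysis of $\int_0^t s^{\delta-1}\|\cB(u,u)(s)\|_{\dot H^{\delta+\f12}}^2\,ds$, since the $(s-\tau)^{-\f12}$ kernel and the $s^{\delta-1}$ weight both sit at the boundary of integrability and require the bilinear exponents to match precisely to close the bootstrap.
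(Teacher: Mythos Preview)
Your plan contains a genuine gap at the absorption step. You claim twice that a quantity becomes small ``when we shrink $T(u_0)$ appropriately'': first for $\|u_0\|_{\dot H^{\frac12}}$ in the energy differential inequality, and later for the prefactor $\|u\|_{L^\infty_T(\dot H^{\frac12})}^2$ multiplying $\|t^{\frac\delta2}\nabla u\|_{L^2_T(\dot H^{\delta+\frac12})}^2$. Neither is correct: $\|u_0\|_{\dot H^{\frac12}}$ does not depend on $T$, and $\|u\|_{L^\infty_T(\dot H^{\frac12})}\to\|u_0\|_{\dot H^{\frac12}}$ as $T\to0^+$. The first instance is harmless, since the differential inequality you state can in fact be obtained without smallness via $\|u\otimes u\|_{\dot H^{\delta+\frac12}}\lesssim\|u\|_{L^\infty}\|u\|_{\dot H^{\delta+\frac12}}$ and Young's inequality. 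The second instance, however, is fatal: after your weighted Hardy step (which is correct and can be justified by a Schur test with power weights) you arrive at
\[
\int_0^t s^{\delta-1}\|\cB(u,u)(s)\|_{\dot H^{\delta+\frac12}}^2\,ds\;\le\;C_\delta\,\|u\|_{L^\infty_T(\dot H^{\frac12})}^2\int_0^t s^\delta\|u(s)\|_{\dot H^{\delta+\frac32}}^2\,ds,
\]
and the prefactor $C_\delta\|u\|_{L^\infty_T(\dot H^{\frac12})}^2$ cannot be made less than $1$ for general data $u_0\in\dot H^{\frac12}$ and general $\delta\in(0,1)$. Your bootstrap does not close.

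The paper circumvents this by running the time-weighted energy estimate not on $u$ but on $\cB=\cB(u,u)$, and at one level of regularity \emph{lower}, namely $\dot H^{\delta-\frac12}$ with weight $t^{\delta-1}$. Since $\cB(0)=0$ there is no initial-data boundary term; the commutator of $\frac{d}{dt}$ with $t^{\delta-1}$ produces an additional good-sign term $(1-\delta)\|t^{\frac{\delta}{2}-1}\cB\|_{\dot H^{\delta-\frac12}}^2$. The nonlinearity $Q(u,u)$ is expanded via $u=e^{t\Delta}u_0+\cB$: the terms carrying at least one factor $e^{t\Delta}u_0$ are controlled by $\|t^{\frac{\delta-1}{2}}e^{t\Delta}u_0\|_{L^2_T(\dot H^{\delta+\frac12})}$ (your linear computation), while the pure $\cB\otimes\cB$ term is estimated by $\|\cB\|_{\dot B^{3/2}_{2,1}}\|\cB\|_{\dot H^{\delta-\frac12}}$ and handled by Gronwall against $\|\cB\|_{L^2_T(\dot B^{3/2}_{2,1})}^2$, which is finite by \eqref{S3eq3a}. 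This produces the Chemin--Planchon type bound $\|t^{\frac{\delta-1}{2}}u\|_{L^2_T(\dot H^{\delta+\frac12})}\le C(\|u_0\|_{\dot H^{\frac12}})$ directly, with no smallness absorption; only then does the paper run the $\dot H^{\delta+\frac12}$ energy estimate on $u$ (your first step) and close by Gronwall.
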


Finally,
we present the proof of  \eqref{eta2.2a}.  Due to $u \in C([0,T]; \dot{H}^{\frac 12})$, for any $\epsilon>0$,  we can
choose $\tau_0>0$ so small that
\begin{align*}
 \max_{0\le t\le \tau_0} \| u(t) - u_0 \|_{\dot{H}^{\frac 12} } \le 0.001 \epsilon,
 \end{align*}
which implies that for any $J\ge 1$,
\begin{align*}
\max_{0\le t \le \tau_0} \| 1_{|\xi| \ge J} |\xi|^{\frac 12} ( \widehat{u}(t, \xi) - \widehat{ u_0 }(\xi) ) \|_{L^2_\xi} \le 0.01
\epsilon.
\end{align*}
While by taking $J_0\ge 1$ so large that
\begin{align*}
\| 1_{|\xi| \ge J_0} |\xi|^{\frac 12}  \widehat{u_0}(\xi)  \|_{L^2_\xi} \le 0.01 \epsilon,
\end{align*}
we deduce that for any $J\ge J_0$,
\beq\label{S3eq8}
\max_{0\le t\le \tau_0 } \| 1_{|\xi| \ge J} |\xi|^{\frac 12} \widehat{u}(t,\xi) \|_{L^2_\xi} \le 0.02 \epsilon.
\eeq

On the other hand, it follows from \eqref{S3eq1} that $t^{\f34}u\in L^\infty([0,T]; \dot{H}^2),$ from which,
we infer
\begin{align*}
\max_{\tau_0\le t \le T} \| P_{>J} u(t, \cdot) \|_{\dot{H}^{\frac 12}} \le C \tau_0^{-\f34} J^{-\frac 32}
\to 0 \qquad \text{as $J\to \infty$}.
\end{align*}
In particular, we can take   $J_1\ge 1$ so large that
\begin{align*}
\max_{\tau_0 \le t \le {T_0}} \| 1_{|\xi| \ge J_1} |\xi|^{\frac 12} \widehat{u}(t,\xi) \|_{L^2_\xi} \le 0.1 \epsilon,
\end{align*}
which together with \eqref{S3eq8} ensures that
 for $J\ge \max\{J_0, \, J_1\}$,
\begin{align*}
\max_{0 \le t \le {T}} \| 1_{|\xi| \ge J} |\xi|^{\frac 12} \widehat{u}(t,\xi) \|_{L^2_\xi} \leq \epsilon.
\end{align*}
This leads to \eqref{eta2.2a}, and we complete the proof of Proposition \ref{S3prop1}.
\end{proof}

Let us present the proof of Lemma \ref{S3lem1}.

\begin{proof}[Proof of Lemma \ref{S3lem1}] We first observe that
\beq \label{S3eq6}
\begin{split}
\|t^{\f{\d-1}2}e^{t\D}u_0\|_{L^2(\R^+;\dot H^{\d+\f12})}^2=&\int_{\R^3}|\xi|^{1+2\d}\int_0^\infty t^{\d-1}e^{-2t|\xi|^2}\,dt
|\wh{u_0}(\xi)|^2\,d\xi\\
=&\int_{\R^3}\int_0^\infty \tau^{\d-1}e^{-2\tau}\,d\tau |\xi||\wh{u_0}(\xi)|^2\,d\xi=C\|u_0\|_{\dot H^{\f12}}^2.
\end{split}
\eeq
{This estimate will be used in the nonlinear computation below.}

By taking the $\dot H^{\d-\f12}$ inner product of \eqref{S2eq14} with $u=v$ and then multiplying the
resulting inequality by $t^{\d-1},$ we obtain that
\begin{align*}
\f12&\f{d}{dt}\bigl\|t^{\f{\d-1}2}\cB(u,u)(t)\bigr\|_{\dot H^{\d-\f12}}^2+\bigl\|t^{\f{\d-1}2}\cB(u,u)(t)\bigr\|_{\dot H^{\d+\f12}}^2
+(1-\d)\bigl\|t^{\f{\d}2-1}\cB(u,u)(t)\bigr\|_{\dot H^{\d-\f12}}^2\\
=& t^{\d-1}\bigl(Q(u,u), \cB(u,u)\bigr)_{\dot H^{\d-\f12}}\\
\leq &Ct^{\f{\d-1}2}\bigl\|(e^{t\D}u_0+\cB(u,u))\otimes (e^{t\D}u_0+\cB(u,u))\bigr\|_{\dot H^{\d-\f12}}\bigl\|t^{\f{\d-1}2}\cB(u,u)(t)\bigr\|_{\dot H^{\d+\f12}}\\
\leq &Ct^{{\d-1}}\bigl\|(e^{t\D}u_0+\cB(u,u))\otimes (e^{t\D}u_0+\cB(u,u))\bigr\|_{\dot H^{\d-\f12}}^2
+\f12\bigl\|t^{\f{\d-1}2}\cB(u,u)(t)\bigr\|_{\dot H^{\d+\f12}}^2,
\end{align*}
from which and the law of product in Sobolev spaces, we infer
\begin{align*}
\f{d}{dt}&\bigl\|t^{\f{\d-1}2}\cB(u,u)(t)\bigr\|_{\dot H^{\d-\f12}}^2+\bigl\|t^{\f{\d-1}2}\cB(u,u)(t)\bigr\|_{\dot H^{\d+\f12}}^2
+(1-\d)\bigl\|t^{\f{\d}2-1}\cB(u,u)(t)\bigr\|_{\dot H^{\d-\f12}}^2\\
\lesssim & \bigl\|t^{\f{\d-1}2}e^{t\D}u_0\|_{\dot H^{\d+\f12}}^2\bigl( \|e^{t\D}u_0\|_{\dot H^{\f12}}^2+\|\cB(u,u)\|_{\dot H^{\f12}}^2\bigr)
+\|\cB(u,u)\|_{\dot B^{\f32}}^2\bigl\|t^{\f{\d-1}2}\cB(u,u)(t)\bigr\|_{\dot H^{\d-\f12}}^2.
\end{align*}
By applying Gronwall's inequality and using \eqref{S3eq3}, \eqref{S3eq3a}, we find for $t\leq T(u_0),$
\begin{align*}
\bigl\|t^{\f{\d-1}2}\cB(u,u)(t)\bigr\|_{L^2_t(\dot H^{\d+\f12})}^2\leq
&C\bigl( \|e^{t\D}u_0\|_{L^\infty_t(\dot H^{\f12})}^2+\|\cB(u,u)\|_{L^\infty_t(\dot H^{\f12})}^2\bigr)\\
&\times \bigl\|t^{\f{\d-1}2}e^{t\D}u_0\|_{L^2_t(\dot H^{\d+\f12})}^2\exp\bigl(C\|\cB(u,u)\|_{L^2_t(\dot B^{\f32})}^2\bigr)\\
\leq & C(\|u_0\|_{\dot H^{\f12}}),
\end{align*}
which together with \eqref{S3eq6} implies that
\beq \label{S3eq7}
\|t^{\f{\d-1}2}u\|_{L^2_t(\dot H^{\d+\f12})}\leq  C(\|u_0\|_{\dot H^{\f12}}). \eeq
 We remark that estimate of this type was first proposed by Chemin and Planchon in \cite{CP12} for the classical 3-D Navier-Stokes
system. The avid reader may view it as a natural $L^2_t$ version of the classical Kato spaces (see \cite{kato}).

On the other hand, we get, by taking $\dot H^{\d+\f12}$ inner product of $(GNS)$ with $u$ and using
the law of product in Sobolev spaces, that
\begin{align*}
\f12\f{d}{dt}\|u(t)\|_{\dot H^{\d+\f12}}^2+\|\na u\|_{\dot H^{\d+\f12}}^2
=&\bigl(Q(u,u), u\bigr)_{\dot H^{\d+\f12}}\\
\lesssim &\|u\otimes u\|_{\dot H^{\d+\f12}}\|\na u\|_{\dot H^{\d+\f12}}\\
\lesssim& \|u\|_{L^\infty}\|u\|_{\dot H^{\d+\f12}}\|\na u\|_{\dot H^{\d+\f12}}.
\end{align*}
Multiplying $t^\d$ to the above inequality yields
\begin{align*}
\f{d}{dt}\|t^{\f\d2}u(t)\|_{\dot H^{\d+\f12}}^2+\|t^{\f\d2}\na u\|_{\dot H^{\d+\f12}}^2
\leq \d\|t^{\f{\d-1}2}u(t)\|_{\dot H^{\d+\f12}}^2+C\|u\|_{L^\infty}^2\|t^{\f\d2}u\|_{\dot H^{\d+\f12}}^2.
\end{align*}

Applying Gronwall's inequality gives rise to
\begin{align*}
\|t^{\f\d2}u\|_{L^\infty_t(\dot H^{\d+\f12})}^2+\|t^{\f\d2}\na u\|_{L^2_t(\dot H^{\d+\f12})}^2\leq C\|t^{\f{\d-1}2}u\|_{L^2_t(\dot H^{\d+\f12})}^2
\exp\bigl(C\|u\|_{L^2_t(L^\infty)}^2\bigr)\quad\mbox{for}\ t\leq T(u_0),
\end{align*}
which together with  \eqref{S3eq4} and \eqref{S3eq7} ensures \eqref{S3eq6a}. This completes the proof of Lemma \ref{S3lem1}.
\end{proof}

{By analogy of the corresponding norm of $\|\cdot\|_{X_T}$ defined in \eqref{S1eq3} for the subcritical case}, for sufficiently small positive
constant $\d,$
we define
\beq\label{S3eq9}
\|u \|_{Y_T}\eqdefa \bigl\| t^{\frac {\delta } 2} | \xi |^{\delta+\frac 12}  1_{|\xi| \ge   T^{-\frac 14} }
{e^{-\frac {\lambda^2 t}{4T}+\lambda \frac t {\sqrt T} |\xi|}   \widehat u (t,\xi)} 
\bigr\|_{L_T^{\infty}(L_{\xi}^2)}.
\eeq
{ Note here we introduce the special cut-off $1_{|\xi| \ge T^{-\frac 14}}$ to break the scaling.}
In what follows,
we shall focus on the estimating  of the
the $Y_T$-norm of $u$.

\begin{prop}\label{S3prop2}
{\sl Let $u_0 \in \dot H^{\frac 12}.$ There exists a sufficiently small positive constant $t_1$ so that
the system
$(GNS)$ with initial data $u_0$ has a unique solution $u$ on $[0,t_1]$ and
\beq\label{S3eq9b}
\| u\|_{Y_{T}} \leq C e^{10^{-4} \lambda^2(T)}
\|u_{\rm h}\|_{L_T^{\infty}(\dot H^{\frac 12}) }  \quad\mbox{for any}\ T\leq t_1,
\eeq where $\la(T)$ is defined by \eqref{S1eq9} and $u_{\rm h}$ is defined by \eqref{S3eq10} below.}
\end{prop}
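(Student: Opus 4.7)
\textbf{Proof plan for Proposition \ref{S3prop2}.}
The plan is to transplant the three-step strategy of Theorem \ref{thm1} to the critical exponent $\gamma=\f12$, with the cutoff $0.01\lambda T^{-\f12}$ appearing in the subcritical $X_T$-norm replaced by the scaling-breaking cutoff $T^{-\f14}$ that defines $Y_T$ in \eqref{S3eq9}. Local well-posedness of $(GNS)$ on some interval $[0,T]$ and the time-weighted smoothing estimates on $u$ are already provided by Proposition \ref{S3prop1}, so the only new work is the bound \eqref{S3eq9b}. I would construct the solution via the Picard iteration $u_{n+1}=e^{t\Delta}u_{0}+\cB(u_n,u_n)$ of \eqref{S2eq21} and estimate $\|u_n\|_{Y_T}$ uniformly in $n$ by exploiting the two pieces of smallness that are genuinely available at criticality: first, the high-frequency smallness $\zeta^{\f12}_{T^{-1/4}}(T)\to 0$ granted by Proposition \ref{S3prop1}, which replaces the subcritical $T^{\f12(\gamma-\f12)}$-gain, and second, the tuning of $\lambda(T)$ in \eqref{S1eq9}, built so that $e^{\f{\lambda^2(T)}4}\zeta^{\f12}_{T^{-1/4}}(T)\ll e^{10^{-4}\lambda^2(T)}$.

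First I would handle the linear contribution. Rewriting the exponential weight as $e^{-\f{\lambda^2 t}{4T}+\lambda\f{t}{\sqrt T}|\xi|-t|\xi|^2}=e^{-t\left(|\xi|-\f{\lambda}{2\sqrt T}\right)^2}$ and absorbing $t^{\f\delta2}|\xi|^{\delta}$ into half of the Gaussian, the estimate reduces to controlling $\bigl\||\xi|^{\f12}1_{|\xi|\ge T^{-1/4}}\widehat{u_0}\bigr\|_{L^2}\lesssim \|u_{\rm h}\|_{L_T^\infty(\dot H^{\f12})}$. Next, for the nonlinear part I would prove a critical analogue of Proposition \ref{S2prop1}: split $\int_0^t=\int_0^{\eta_0 t}+\int_{\eta_0 t}^t$ and apply the frequency trichotomy \eqref{S2eq5} adapted to the cutoff $T^{-\f14}$; on the slice $\int_0^{\eta_0 t}$, Young's inequality and Sobolev embedding (with the exponents $\f{3}{2(1-\delta)}$, $\f{3}{1+2\delta}$ of Section 2) yield bounds of the form $e^{4\eta_0\lambda^2}\|u\|_{L_T^\infty(\dot H^{1/2})}\|u_{\rm h}\|_{L_T^\infty(\dot H^{1/2})}$, which are tamed by choosing $\eta_0$ small and by the smallness of $\|u_{\rm h}\|_{L_T^\infty(\dot H^{1/2})}$; on the slice $\int_{\eta_0 t}^t$, after passing through $\T(u)$ and completing the square, the analogues of Lemma \ref{S2lem1} apply, the high-high interaction produces the dominant factor $e^{\lambda^2/4}\|u\|_{Y_T}^2$, while the high-low interactions generate controllable $\lambda^{2-\delta}T^{\delta/2}$-losses.

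The principal obstacle is the absorption of the factor $e^{\lambda^2/4}$ into $e^{10^{-4}\lambda^2}$, since at criticality no positive power of $T$ is available to counteract it. The definition \eqref{S1eq9} is tailor-made for this: $\lambda^2(T)=3\min\bigl\{|\ln\zeta^{\f12}_{T^{-1/4}}(T)|,\,|\ln T|\bigr\}$ forces $\zeta^{\f12}_{T^{-1/4}}(T)\le e^{-\lambda^2/3}$, so that $e^{\lambda^2/4}\cdot \zeta^{\f12}_{T^{-1/4}}(T)\le e^{-\lambda^2/12}$, which beats any polynomial in $\lambda$ and in particular is far smaller than $e^{10^{-4}\lambda^2}$. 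Combined with $\|u_{\rm h}\|_{L_T^\infty(\dot H^{1/2})}\lesssim \zeta^{\f12}_{T^{-1/4}}(T)$ from Proposition \ref{S3prop1}, the worst nonlinear contribution $e^{\lambda^2/4}\|u_{\rm h}\|^2_{L_T^\infty(\dot H^{1/2})}$ is dominated by $e^{10^{-4}\lambda^2}\|u_{\rm h}\|_{L_T^\infty(\dot H^{1/2})}$, which is the shape of the target inequality \eqref{S3eq9b}.

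Finally, a standard contraction-style iteration in the norm $\|\cdot\|_{L_T^\infty(\dot H^{1/2})}+\|\cdot\|_{Y_T}$, closed on a sufficiently short interval $[0,t_1]\subset[0,T]$ guaranteed by the $T\to 0^+$ decay of $\zeta^{\f12}_{T^{-1/4}}(T)$, provides the uniform bound $\|u_n\|_{Y_T}\le Ce^{10^{-4}\lambda^2(T)}\|u_{\rm h}\|_{L_T^\infty(\dot H^{1/2})}$; passing to the limit $n\to\infty$ along the convergence guaranteed by Proposition \ref{S3prop1} yields \eqref{S3eq9b} and concludes the proof.
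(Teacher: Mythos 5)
Your proposal is correct and follows essentially the same route as the paper: the scaling-breaking cutoff $T^{-\f14}$, the time splitting $\int_0^{\eta_0 t}+\int_{\eta_0 t}^t$ with the frequency trichotomy, and crucially the absorption of the high-high term $e^{\la^2/4}\|u\|_{Y_T}^2$ via the tailored definition of $\la(T)$ forcing $e^{\la^2/4}\zeta^{\f12}_{T^{-1/4}}(T)\le e^{-\la^2/12}$, which is exactly the mechanism behind the paper's choice of $t_1$ in \eqref{S3eq20}. The only (immaterial) differences are bookkeeping: the paper records a $T^{\delta/4}$ rather than $T^{\delta/2}$ gain in the high-low terms and a harmless extra factor $\la^{\delta}$ in the linear estimate, and it presents the bound as an a priori estimate rather than running the Picard iteration explicitly.
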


\begin{proof} For simplicity, we just present the {\it a priori} estimates.
We take $M\eqdefa T^{-\frac 14}$ and split the solution $u$ of $(GNS)$ as
\beq\label{S3eq10}
\begin{split}
&u=u_{\rm l}+u_{\rm h} \with  \widehat{u_{\rm l}}
\eqdefa \underbrace{\widehat{u}\cdot 1_{|\xi| <M}} \andf \widehat{u_{\rm h}}\eqdefa
\underbrace{\widehat{u} \cdot 1_{|\xi|\ge M }},\\
&\wh{\cS(u)}(t,\xi)\eqdefa e^{-\frac {\lambda^2 t}{4T}+\lambda \frac t {\sqrt T} |\xi|}   \widehat{u} (t,\xi).
\end{split}\eeq

We first observe  from Proposition \ref{S3prop1} that for all $0<t\le T\leq 1$:
\beq \label{S3eq11}
\begin{split}
   t^{\frac {\delta} 2} e^{\frac {\lambda^2 t} {4T } }\| |D|^{\delta+\frac 12} \cS(u_{\rm l})(t) \|_{L^2}
 \lesssim & t^{\frac {\delta}2} \| |\xi|^{\delta+\frac 12} e^{\lambda \frac t {\sqrt T } |\xi|} 1_{|\xi| <
 T^{-\frac 14}}
 \widehat u (t, \xi) \|_{L^2_\xi}  \\
  \lesssim & t^{\frac {\delta}2}T^{-\f\d4} e^{\la T^{\f14}} \| |\xi|^{\frac 12}
 \widehat u (t, \xi) \|_{L^2_\xi}  \\
 \lesssim & T^{\frac {\delta}4} e^{\la T^{\f14}} C(\| u_0 \|_{\dot H^{\frac 12}}).
 \end{split}\eeq
 Note that due to our low frequency cut-off $1_{|\xi|\le T^{-\frac 14}},$ there is a saving of $T^{\frac {\delta} 4}$ in the
 {low frequency} estimate
 \eqref{S3eq11}.

In view of \eqref{S3eq9}, the  the $Y_T$-norm of $u$ is just  $\|t^{\frac {\delta} 2} |D|^{\delta+\frac 12} \cS(u_{\rm h}) \|_{L_T^{\infty}(L^2)}$.
To estimate $\| u \|_{Y_T}$, we first notice that
\beq\label{S3eq12}
\begin{split}
 & \bigl\|t^{\frac {\delta } 2} 1_{M \le |\xi| \le 3M }
  | \xi |^{\delta+\frac 12} e^{
 -\frac {\lambda^2 t} {4T} +\lambda \frac t {\sqrt T} |\xi| } \widehat u \bigr\|_{L^\infty_T(L^2_\xi)}  \\
 &\leq T^{\f\d2}M^\d\bigl\|e^{-t\left(\f\la{2\sqrt{T}}-|\xi|\right)^2}e^{9tM^2} |\xi|^{\f12} 1_{ |\xi| \geq M }  \widehat u \bigr\|_{L^\infty_T(L^2_\xi)}
 \\
 &\lesssim  T^{\frac {\delta} 4} \| u_{\rm h}\|_{L_t^{\infty}(H^{\frac12})}.
 \end{split}\eeq

To complete the estimate of $\|u\|_{Y_T}$, it remains for us to  estimate the term below: $$\bigl\| 1_{|\xi| \ge 3M }  t^{\frac {\delta } 2}
  | \xi |^{\delta+\frac 12} e^{
 -\frac {\lambda^2 t} {4T} +\lambda \frac t {\sqrt T} |\xi| } \widehat u \bigr\|_{L^\infty_T(L^2)}.$$
 For this, as in the proof of Theorem \ref{thm1}, we shall turn to nonlinear estimates. Again thanks to the frequency cut-off $1_{|\xi|\ge 3M}$, there will be no low-low interactions in the nonlinear estimate below.

Indeed similar to the proof of Proposition \ref{S2prop1}, for any $\eta_0>0,$ which will be chosen sufficiently small later on,
we split the integral $\int_0^t = \int_0^{\eta_0 t} + \int_{\eta_0 t}^t$
 and estimate
each pieces separately.\\

\noindent{\bf Step 1.} The estimate of the {short-time} piece $\int_0^{\eta_0 t}.$\\

Recall that $N_1=\la T^{-\f12},$ we write
\beq \label{S3eq13}
\begin{split}
 &\bigl\| t^{\frac {\delta} 2} |\xi|^{\frac 12+\d}  1_{|\xi| \ge 3M }
 \int_0^{\eta_0 t}  e^{-(t-s) |\xi|^2}
 e^{\lambda \frac t {\sqrt T } |\xi| -\frac {\lambda^2 t}{4 T} }
 \widehat{Q(u.u)}(s,\xi) \,ds \bigr\|_{L^\infty_T(L^2_\xi)} \\
 &\lesssim   \bigl\| t^{\frac {\delta} 2}
 1_{|\xi| < 2 N_1}
 \int_0^{\eta_0 t} |\xi|^{\frac 32+\delta}  e^{s |\xi|^2}
 1_{|\xi|\ge 3M } \widehat{u\otimes u}(s, \xi)\, ds \bigr\|_{L^\infty_T(L^2_\xi)}  \\
 & \qquad + \bigl\| t^{\frac {\delta} 2}   1_{|\xi| \ge 2 N_1 }
 \int_0^{\eta_0 t} |\xi|^{\frac 32+\delta}  e^{- \frac 1{10} t|\xi|^2}
 \widehat{ u\otimes u}(s,\xi) \,ds \|_{L^\infty_T(L^2_\xi)},
 \end{split}\eeq
 {where $\eta_0>0$ will be taken sufficiently small.}

We first deduce from Young's inequality and \eqref{S2eq5} that
\begin{align*}
 &  t^{\frac {\delta} 2}\int_{0}^{\eta_0t}   \bigl\|  1_{|\xi| < 2N_1}   |\xi|^{\frac 32 +\delta}e^{s|\xi|^2} 1_{|\xi|\ge 3M} \widehat{u\otimes u}(s, \xi) \bigr\|_{L_{\xi}^2} \,ds \\
 & \lesssim t^{\frac {\delta} 2} \int_{0}^{\eta_0 t} e^{4 s N_1^2} N_1^{\frac 32+\delta}\|1_{|\xi| \le 2N_1} \|_{L^6_\xi}\|1_{|\xi|\ge 3M} \widehat{u\otimes u}(s, \xi)\|_{L_{\xi}^3} \,ds\\
 & \lesssim t^{\frac {\delta} 2} \int_{0}^{\eta_0 t} e^{4 s N_1^2} N_1^{\frac 32+\delta}
  \cdot N_1^{\frac 12}  \bigl(\| u_{\rm l} \otimes u_{\rm h}(s)\|_{L^{\frac 3{2}}} +\|  u_{\rm h}\otimes u_{\rm h}(s) \|_{L^{\frac 3{2 }}}\bigr)\, ds  \\
& \lesssim (N_1^2 T)^{1+\frac{\delta} 2} e^{4\eta_0 T N_1^2}   \| u\|_{L^\infty_T(L^3)} \|u_{\rm h}\|_{L_T^{\infty}(L^3)} \\
& \lesssim  e^{5\eta_0 \lambda^2}\|u\|_{L^\infty_T(\dot H^{\frac 12})}
 \|u_{\rm h}\|_{L_T^{\infty}(\dot H^{\frac 12})},
   \end{align*}
   where in the last step, we {invoke the usual Sobolev embedding}: $\dot H^{\f12}(\R^3)\hookrightarrow L^3(\R^3).$

Along the same line, {we compute}
 \begin{align*}
 &t^{\frac {\delta} 2} \bigl\|
 \int_{0}^{\eta_0 t}  |\xi|^{\frac 32+\delta} 1_{|\xi| \ge  2N_1}e^{-   \frac {t}{10}  |\xi|^2}
 \widehat{ u\otimes u}(s,\xi)\, ds \bigr\|_{L^2_\xi}  \\
 &\lesssim t^{\frac {\delta} 2}
 \int_{0}^{\eta_0 t}  \bigl\||\xi|^{\frac 32+\delta} e^{-   \frac {t}{10}  |\xi|^2}\bigr\|_{L^6_\xi}
 \bigl\|1_{|\xi| \ge  2N_1}\widehat{ u\otimes u}(s,\xi) \|_{L^3_\xi} \, ds \\
&\lesssim   t^{-1}  \int_0^{\eta_0 t}
 \bigl (\| u_{\rm l}\otimes u_{\rm h}(s)\|_{L^{\frac 3{2}}} +\|  u_{\rm h}\otimes u_{\rm h}(s)\|_{L^{\frac 3{2 }}}\bigr) \,ds \\
 &\lesssim \|u\|_{L^\infty_T(\dot H^{\frac 12})}
 \|u_{\rm h}\|_{L_T^{\infty}(\dot H^{\frac 12})}.
  \end{align*}

By inserting the above estimates into \eqref{S3eq13}, we obtain
\beq \label{S3eq14}
\begin{split}
 \bigl\| t^{\frac {\delta} 2} |\xi|^{\frac 12+\d}  1_{|\xi| \ge 3M }
 \int_0^{\eta_0 t}  e^{-(t-s) |\xi|^2}
 e^{\lambda \frac t {\sqrt T } |\xi| -\frac {\lambda^2 t}{4 T} }
& \widehat{Q(u,u)}(s,\xi) \,ds \bigr\|_{L^\infty_T(L^2_\xi)} \\
& \lesssim  e^{5\eta_0 \lambda^2}\|u\|_{L^\infty_T(\dot H^{\frac 12})}
 \|u_{\rm h}\|_{L_T^{\infty}(\dot H^{\frac 12})}.
 \end{split}
 \eeq

 \noindent{\bf Step 2.} The estimate of the piece $\int_{\eta_0 t}^t.$\\

In view  of \eqref{S1eq1} and \eqref{S3eq10}, we have
\beq\label{S3eq15}
 \begin{split}
 & t^{\frac {\delta} 2} \bigl||\xi|^{\frac 12+\d}  1_{|\xi| \ge 3M }
 \int_{\eta_0 t}^{t}   e^{-(t-s) |\xi|^2}
 e^{\lambda \frac t {\sqrt T } |\xi| -\frac {\lambda^2 t}{4 T} }
 \widehat{Q(u,u)}(s,\xi) \,ds \bigr|\\
 &\lesssim
    t^{\frac {\delta} 2} |\xi|^{\f32+\d}
 \int_{\eta_0 t}^{t}   e^{- (t-s) |\xi|^2}
 e^{\lambda \frac {t-s} {\sqrt T} |\xi| +\frac {\lambda^2 s} {2 T}-\frac {\lambda^2 t}{4T} }
 1_{|\xi| \ge 3M }\bigl|\widehat{\cS(u)\otimes\cS(u)}(s,\xi)\bigr|\,ds.
 \end{split}\eeq
 By frequency localization \eqref{S2eq5}, we find
 \begin{align*}
 1_{|\xi| \ge 3M }\widehat{\cS(u)\otimes\cS(u)} = 1_{|\xi|\ge 3M }
 \widehat{\cS(u_{\rm l})\otimes \cS(u_{\rm h})}  +  1_{|\xi|\ge 3M }
 \widehat{\cS(u_{\rm h})\otimes\cS(u_{\rm l})}
 +
 1_{|\xi|\ge 3M } \widehat{\cS(u_{\rm h})\otimes\cS(u_{\rm h})}.
 \end{align*}
 We first handle the contribution due to $\cS(u_{\rm h})\otimes\cS(u_{\rm h})$. Indeed, it is easy to observe that
\begin{align*}
  &\bigl\| t^{\frac {\delta} 2} |\xi|^{\f32+\d}
 \int_{\eta_0 t}^{t}    e^{- (t-s) |\xi|^2}
 e^{\lambda \frac {t-s} {\sqrt T} |\xi| +\frac {\lambda^2 s} {2 T}-\frac {\lambda^2 t}{4T} }
 \widehat{\cS(u_{\rm h})\otimes\cS(u_{\rm h})}(s,\xi)\, ds \|_{L^\infty_T(L^2_\xi)} \\
 & \lesssim \bigl\| t^{\frac {\delta} 2}
 \int_{\eta_0 t}^{t} |\xi|^{\frac 32+\delta} e^{-   (t-s) \left(|\xi| - \frac {\lambda } {2\sqrt T } \right)^2  +\frac {\lambda^2 s}{4T}}
 \widehat{\cS(u_{\rm h})\otimes\cS(u_{\rm h})}(s,\xi)\,ds \|_{L^\infty_T(L^2_\xi)} \\
  &\lesssim  \bigl\| t^{\frac {\delta} 2}
 \int_{\eta_0 t}^{t} |\xi|^{\frac 32+\delta} 1_{|\xi| \le N_1} e^{\frac {\lambda^2 s}{4T} }
  \widehat{\cS(u_{\rm h})\otimes\cS(u_{\rm h})}(s,\xi)\, ds \|_{L^\infty_T(L^2_\xi)}  \\
  &\quad + \bigl\| t^{\frac {\delta} 2}
 \int_{\eta_0 t}^{t} |\xi|^{\frac 32+\delta} 1_{|\xi| \ge  N_1 }e^{-   \frac 1{10} (t-s) |\xi|^2
 +\frac {\lambda^2 s }{4T} }
 \widehat{\cS(u_{\rm h})\otimes\cS(u_{\rm h})}(s,\xi)\,ds \|_{L^\infty_T(L^2_\xi)},
 \end{align*}
 from which, $N_1\geq 1$ and Lemma \ref{S2lem1}, we infer
 \begin{align*}
  &\bigl\| t^{\frac {\delta} 2} |\xi|^{\f32+\d}
 \int_{\eta_0 t}^{t}    e^{- (t-s) |\xi|^2}
 e^{\lambda \frac {t-s} {\sqrt T} |\xi| +\frac {\lambda^2 s} {2 T}-\frac {\lambda^2 t}{4T} }
 \widehat{\cS(u_{\rm h})\otimes\cS(u_{\rm h})}(s,\xi)\, ds \|_{L^\infty_T(L^2_\xi)} \\
 &\lesssim  \lambda^{-\delta} e^{\frac {\lambda^2 }4} \cdot \sup_{0<s \le T}
 \bigl(s^{\delta}\| \cS(u_{\rm h}) (s)  \cS(u_{\rm h})(s) \|_{L^\infty_T(L^{\frac 3 {2(1-\delta)} })}  \bigr)  \\
& \lesssim  \lambda^{-\delta} e^{\frac {\lambda^2 }4} \cdot  \sup_{0<s \le T}
 \Bigl(s^{\delta} \| |D|^{\frac 12 +\delta} \cS(u_{\rm h})(s)\|_{L^\infty_T(L^2)}^2\Bigr) \\
& \lesssim   \lambda^{-\delta} e^{\frac {\lambda^2 } 4} \|u\|_{Y_T}^2,
 \end{align*}
 where in the last step, we {applied the usual Sobolev embedding } $\dot H^{\f12+\d}(\R^3)\hookrightarrow L^{\frac 3 {(1-\delta)} }(\R^3).$

Along the same line, we deduce that
  \begin{align*}
 & \bigl\| t^{\frac {\delta} 2} |\xi|^{\f32+\d}
 \int_{\eta_0 t}^{t}    e^{- (t-s) |\xi|^2}
 e^{\lambda \frac {t-s} {\sqrt T} |\xi| +\frac {\lambda^2 s} {2 T}-\frac {\lambda^2 t}{4T} }
 \widehat{\cS(u_{\rm l})\otimes\cS(u_{\rm h})}(s,\xi)\,ds \|_{L^\infty_T(L^2_\xi)} \\
 &\lesssim  \bigl\| t^{\frac {\delta} 2}
 \int_{\eta_0 t}^{t} |\xi|^{\frac 32+\delta} e^{-   (t-s) \left(|\xi| - \frac {\lambda } {2\sqrt T } \right)^2  +\frac {\lambda^2 s}{4T}}
 \widehat{\cS(u_{\rm l})\otimes\cS(u_{\rm h})}(s,\xi)\,ds \bigr\|_{L^\infty_T(L^2_\xi)} \\
  &\lesssim \bigl\| t^{\frac {\delta} 2}
 \int_{\eta_0 t}^{t} |\xi|^{\frac 32+\delta} 1_{|\xi| \le N_1} e^{\frac {\lambda^2 s}{4T} }
  \widehat{\cS(u_{\rm l})\otimes\cS(u_{\rm h})}(s,\xi)\,ds \|_{L^\infty_T(L^2_\xi)} \\
  &\quad + \bigl\| t^{\frac {\delta} 2}
 \int_{\eta_0 t}^{t} |\xi|^{\frac 32+\delta} 1_{|\xi| \ge  N_1 }e^{-   \frac 1{10} (t-s) |\xi|^2
 +\frac {\lambda^2 s }{4T} }
 |\widehat{\cS(u_{\rm l})\otimes\cS(u_{\rm h})}(s,\xi)\, ds \|_{L^\infty_T(L^2_\xi)},
 \end{align*}
 from which and Lemma \ref{S2lem1}, we get, by a similar derivation of \eqref{S2eq11a}, that
 \begin{align*}
 &\bigl\| t^{\frac {\delta} 2} |\xi|^{\f32+\d}
 \int_{\eta_0 t}^{t}    e^{- (t-s) |\xi|^2}
 e^{\lambda \frac {t-s} {\sqrt T} |\xi| +\frac {\lambda^2 s} {2 T}-\frac {\lambda^2 t}{4T} }
 \widehat{\cS(u_{\rm l})\otimes\cS(u_{\rm h})}(s,\xi)\,ds \|_{L^\infty_T(L^2_\xi)} \\
 &\lesssim  \bigl(\lambda^{2-\delta} +1\bigr) \cdot \sup_{0<s \le T} \bigl(s^{\delta}
 e^{\frac {\lambda^2 s}{4T} }\| \cS(u_{\rm l})(s)  \cS(u_{\rm h})(s) \|_{L^\infty_T(L^{\frac 3 {2(1-\delta)} })}\bigr) \\
 &\lesssim  \lambda^{2-\delta}  \cdot  \sup_{0<s \le T}
 \bigl(s^{\frac {\delta}2}  e^{\frac {\lambda^2 s}{4T} } \| |D|^{\frac 12+\delta} \cS(u_{\rm l})(s) \|_{L^2}
 \cdot s^{\frac {\delta}2} \| |D|^{\frac 12 +\delta} \cS(u_{\rm h})(s) \|_{L^2}\bigr)  \\
 &\lesssim   \lambda^{2-\delta} T^{\frac {\delta}4}e^{\la T^{\f14}}\|u_0\|_{\dot H^{\frac 12}} \|u\|_{Y_T},
 \qquad (\text{by \eqref{S3eq11}}).
 \end{align*}

{Plugging the above estimates} into \eqref{S3eq15}, we achieve
\beq\label{S3eq16}
 \begin{split}
 &\bigl\| t^{\frac {\delta} 2} |\xi|^{\frac 12+\d}  1_{|\xi| \ge 3M }
 \int_{\eta_0 t}^{t}   e^{-(t-s) |\xi|^2}
 e^{\lambda \frac t {\sqrt T } |\xi| -\frac {\lambda^2 t}{4 T} }
 \widehat{Q(u,u)}(s,\xi) \,ds \bigr\|_{L^\infty_T(L^2_\xi)}\\
 &\lesssim   \lambda^{-\delta} e^{\frac {\lambda^2 } 4} \|u\|_{Y_T}^2+\lambda^{2-\delta} T^{\frac {\delta}4}e^{\la T^{\f14}} \|u_0\|_{\dot H^{\frac 12}}\|u\|_{Y_T}.
 \end{split}
 \eeq

 \noindent{\bf Step 3.} The estimate of $\|u\|_{Y_T}.$\\

Now let us return to the estimate of \eqref{S3eq9b}.
Indeed by virtue of \eqref{S2eq20} and \eqref{S3eq9}, we get, by summing up the estimates
\eqref{S3eq12}, \eqref{S3eq14} and \eqref{S3eq16}, that
\beq\label{S3eq17}
\begin{split}
\|u\|_{Y_T} \le \| e^{t\Delta} u_0 \|_{Y_T}  +  C_1\bigl(&
T^{\frac {\delta} 4} \| u_{\rm h}\|_{L_t^{\infty}(\dot H^{\frac12})} +
e^{5\eta_0 \lambda^2}  \| u_0\|_{\dot H^{\frac 12}} \|u_{\rm h}\|_{L_t^{\infty}(\dot H^{\frac 12})}
 \\
&  + \lambda^{-\delta}e^{\frac {\lambda^2}{4}}
 \|u\|_{Y_T}^2 + T^{\frac {\delta} 4}  \lambda^{2-\delta} e^{\la T^{\f14}} \|u_0\|_{\dot H^{\frac 12}}\|u\|_{Y_T}\bigr).
\end{split}\eeq

To see the smallness of the linear term $\| e^{t\Delta} u_0 \|_{Y_T}$, we observe that for all $0<t\le T$:
\begin{align*}
\| e^{t\Delta} u_0 \|_{Y_T}= &\| t^{\frac {\delta} 2} |\xi|^{\frac 12+\delta} 1_{|\xi| \ge    T^{-\frac 14} }
e^{-\frac {\lambda^2 t}{4T} +\lambda \frac t {\sqrt T } |\xi| } e^{-t|\xi|^2} \widehat {u_0} (\xi) \|_{L^\infty_T(L^2_\xi)}\\
\lesssim &
\bigl\| 1_{|\xi |\ge 2N_1 }|\xi|^{\frac 12}
t^{\frac {\delta} 2} |\xi|^{\delta} e^{-\f1{10} t |\xi|^2}
 \widehat {u_0} (\xi) \bigr\|_{L^\infty_T(L^2_\xi)} \\
 & +
 \bigl\| t^{\frac {\delta} 2} |\xi|^{\frac 12+\delta}  1_{|\xi| \le 2 N_1}
  1_{|\xi| \ge T^{-\frac 14}} \widehat {u_0} (\xi) \bigr\|_{L^\infty_T(L^2_\xi)}
  \\
\lesssim &
\| u_{\rm h} \|_{L_t^{\infty}(\dot H^{\frac 12})} + \lambda^{\delta} \| u_{\rm h} \|_{L_t^{\infty}(\dot H^{\frac 12}) }
\le C_1  \lambda^{\delta} \| u_{\rm h} \|_{L_T^{\infty}(\dot H^{\frac 12})}.
\end{align*}

Let us now take $\la=\la(T),$  which is given by \eqref{S1eq9}.
Then by shrinking $\mathcal{T}>0$ to be so small  that
\beq \label{S3eq18}
\mathcal{T}\eqdefa \bigl\{ T:\ C_1 \|u_0 \|_{H^{\frac 12}} T^{\frac {\delta} 4} \cdot \lambda^{2-\delta}(T)e^{\la(T) T^{\f14}} \leq \frac 12 \ \bigr\},
\eeq
we deduce from \eqref{S3eq17} that for any $T\leq \mathcal{T},$
\beq\label{S3eq19}
\|u\|_{Y_T} \le  C_2\bigl( e^{5\eta_0 \lambda^2(T)}
\|u_{\rm h}\|_{L_T^{\infty}(\dot H^{\frac 12}) }
+\lambda^{-\delta}(T)e^{\frac {\lambda^2(T)}{4}}
 \|u\|_{Y_T}^2\bigr).
\eeq
We further shrink $t_1>0$ to be so small that
\beq\label{S3eq20}
t_1\eqdefa \bigl\{T\leq \mathcal{T}: 4C_2^2
\|u_{\rm h}\|_{L_T^{\infty}(\dot H^{\frac 12}) }  e^{5\eta_0 \lambda^2(T)}
\cdot \lambda^{-\delta}(T) e^{\frac {\lambda^2(T)}{4}}
  \leq \frac 1{2}\ \bigr\}.
\eeq
We remark that thanks to the definition of $\la(T)$ given by \eqref{S1eq9} and \eqref{eta2.2a}, $t_1$ defined by \eqref{S3eq20} can be reached.

Then we deduce from \eqref{S3eq19} that for any $T\leq t_1,$
\beq \label{S3eq21}
\|u\|_{Y_T} \leq 2 C_2 e^{5\eta_0 \lambda^2(T)}
\|u_{\rm h}\|_{L_T^{\infty}(\dot H^{\frac 12}) }.
\eeq
In particular, by taking $\eta_0=10^{-5},$ we conclude the proof of  \eqref{S3eq9b}.
We thus complete the proof of Proposition \ref{S3prop2}.
\end{proof}

Now we are in a position to complete the proof of Theorem \ref{thm2}.

\begin{proof}[Proof of Theorem \ref{thm2}] Again for simplicity, we just present the {\it a priori} estimates.
In view of \eqref{S3eq10}, we get, by summing up \eqref{S3eq9b} and \eqref{S3eq11},
that for any $T\leq t_1,$
\begin{align*}
\|t^{\f{\d}2}\cS(u)\|_{L^\infty_T(\dot H^{\f12+\d})}\leq &\|t^{\f{\d}2}\cS(u_{\rm l})\|_{L^\infty_T(\dot H^{\f12+\d})}
+\|u\|_{Y_T}\\
\leq &C(\|u_0\|_{\dot H^{\f12}})T^{\f{\d}4}e^{-\f{\la^2(T)}{4}}+Ce^{10^{-4}\la^2(T)}\|u_{\rm h}\|_{L^\infty_T(\dot H^{\f12})}.
\end{align*}
In particular, by taking $t=T$ in the above inequality, we achieve for $T\leq t_1,$ which is determined by Proposition \ref{S3prop2},
\beno
T^{\f\d2}e^{-\f{\la^2(T)}4}\|e^{\la(T)\sqrt{T}|D|}u(T)\|_{\dot H^{\f12+\d}}\leq C(\|u_0\|_{\dot H^{\f12}})T^{\f{\d}4}e^{-\f{\la^2(T)}{4}}+Ce^{10^{-4}\la^2(T)}\|u_{\rm h}\|_{L^\infty_T(\dot H^{\f12})},
\eeno
which leads to \eqref{S1eq10q}. We thus complete the proof of Theorem \ref{thm2}.
\end{proof}

\medskip

\section*{Acknowledgments}
This work was initiated during P. Zhang's visit to the University of Hong Kong in the spring of 2024.
He would to appreciate the hospitality of the colleagues in the department of mathematics.
D. Li is supported  in part by NSFC 12271236.
P. Zhang is supported by National Key R$\&$D Program of China under grant
  2021YFA1000800 and K. C. Wong Education Foundation.
 He is also partially supported by NSFC  12288201 and 12031006.\medskip

\end{document}